\definecolor{darkgreen}{rgb}{0.00,0.33,0.25}
\definecolor{darkred}{rgb}{0.60,0.05,0.05}
\definecolor{darkblue}{rgb}{0.05,0.05,0.60}
\numberwithin{equation}{section}
\newcommand*\rel@kern[1]{\kern#1\dimexpr\macc@kerna}
\newcommand*\widebar[1]{%
  \begingroup
  \def\mathaccent##1##2{%
    \rel@kern{0.8}%
    \overline{\rel@kern{-0.8}\macc@nucleus\rel@kern{0.2}}%
    \rel@kern{-0.2}%
  }%
  \macc@depth\@ne
  \let\math@bgroup\@empty \let\math@egroup\macc@set@skewchar
  \mathsurround\z@ \frozen@everymath{\mathgroup\macc@group\relax}%
  \macc@set@skewchar\relax
  \let\mathaccentV\macc@nested@a
  \macc@nested@a\relax111{#1}%
  \endgroup
}
\def\newaliasedtheorem#1[#2]#3{
  \newaliascnt{#1@alt}{#2}
  \newtheorem{#1}[#1@alt]{#3}
  \expandafter\newcommand\csname #1@altname\endcsname{#3}
}
\def\NewTheorem#1{%
	  \newaliascnt{#1}{theorem}
	  \newtheorem{#1}[#1]{\expandafter\MakeUppercase #1}
	  \aliascntresetthe{#1}
	  \expandafter\def\csname #1autorefname\endcsname{\MakeUppercase #1}
}
\theoremstyle{plain}
	\newtheorem{theorem}{Theorem}[section]
	\newtheorem*{definition*}{Definition}
\theoremstyle{remark}
	\newtheorem*{claim*}{Claim}
	\newtheorem*{remark*}{Remark}
	\newtheorem*{example*}{Example}
	\newtheorem*{notation*}{Notation}
\DeclareMathOperator{\diam}{diam}
\DeclareMathOperator{\Ent}{Ent}
\newcommand{\R}{\mathbb{R}}
\newcommand{\N}{\mathbb{N}}
\newcommand{\dd}{\, \mathrm{d}}
\newcommand{\ddd}{\mathrm{d}}
\newcommand{\PCN}{\mathsf{PC}_N}
\newcommand{\PCcT}{\mathsf{PC}_\cT}
\newcommand{\weakto}{\rightharpoonup}
\DeclareMathOperator{\Lip}{Lip}
\newcommand{\Haus}{\mathscr{H}}
\newcommand{\Leb}{\mathscr{L}}
\newcommand{\Q}{\text{Q}}
\newcommand{\suchthat}{\ensuremath{\ : \ }} 
\DeclareMathOperator{\supp}{supp}
\newcommand{\abs}[1]{\ensuremath{\left\lvert #1 \right\rvert}} 
\newcommand{\cD}{\mathcal{D}}
\newcommand{\sH}{\mathscr{H}}
\newcommand{\cP}{\mathscr{P}}
\newcommand{\cL}{\mathcal{L}}
\newcommand{\cX}{\mathcal{X}}
\newcommand{\cE}{\mathcal{E}}
\newcommand{\cH}{\mathcal{H}}
\newcommand{\cA}{\mathcal{A}}
\newcommand{\cU}{\mathcal{U}}
\newcommand{\cM}{\mathcal{M}}
\newcommand{\cW}{\mathcal{W}}
\newcommand{\cT}{\mathcal{T}}
\newcommand{\cF}{\mathcal{F}}
\newcommand{\cI}{\mathcal{I}}
\newcommand{\cO}{\mathcal{O}}
\newcommand{\cS}{\mathcal{S}}
\newcommand{\bH}{\mathbf{H}}
\newcommand{\bE}{\mathbf{E}}
\newcommand{\bA}{\mathbf{A}}
\newcommand{\bI}{\mathbf{I}}
\newcommand{\bF}{\mathbf{F}}
\newcommand{\bM}{\mathbf{M}}
\newcommand{\tbF}{\tilde\bF}
\newcommand{\bW}{\mathbf{W}_2}
\newcommand{\ssubset}{\Subset}
\newcommand{\vphi}{\varphi}
\newcommand{\Cyl}{\text{Cyl}}
\newcommand{\simh}{\stackrel{h}{\sim}}
\newcommand{\bOmega}{\overline\Omega}
\newcommand{\fm}{\mathfrak{m}}
\newcommand{{\bfm}}{\widebar{\mathfrak{m}}}
\newcommand{\eps}{\varepsilon}
\def\avint{\mathop{\,\rlap{--}\!\!\int}\nolimits}
\newcommand{\tand}{\quad\text{ and }\quad}
\newcommand{\ip}[1]{\langle {#1}\rangle}
\newcommand{\ddt}{\frac{\mathrm{d}}{\mathrm{d}t}}
\def\supp{\operatorname{supp}}
\def\tand{\quad{\rm and}\quad}
\definecolor{jan}{rgb}{0.0,0.3,0.8}
\definecolor{mat}{rgb}{0.0,0.5,0.3}
\definecolor{ube}{rgb}{0.72, 0.52, 0.91}
\author{Dominik Forkert} 
\author{Jan Maas}
\author{Lorenzo Portinale}
\address{Institute of Science and Technology Austria (IST Austria),
Am Campus 1, 3400 Klosterneuburg, Austria}
\email{dominik.forkert@ist.ac.at}
\email{jan.maas@ist.ac.at}
\email{lorenzo.portinale@ist.ac.at}
\title[Evolutionary $\Gamma$-convergence for Fokker-Planck equations]{Evolutionary $\Gamma$-convergence of entropic gradient flow structures for Fokker-Planck equations in multiple dimensions}
\begin{document}

\setcounter{tocdepth}{1}

\begin{abstract}
We consider finite-volume approximations of Fokker-Planck equations on bounded convex domains in $\R^d$ and study the corresponding gradient flow structures. 
We reprove the convergence of the discrete to continuous Fokker-Planck equation via the method of Evolutionary $\Gamma$-convergence, i.e., we pass to the limit at the level of the gradient flow structures, generalising the one-dimensional result obtained by Disser and Liero. 
The proof is of variational nature and relies on a Mosco convergence result for functionals in the discrete-to-continuum limit that is of independent interest.
Our results apply to arbitrary regular meshes, even though the associated discrete transport distances may fail to converge to the Wasserstein distance in this generality.
\end{abstract}

\maketitle

\section{Introduction}
This paper deals with the convergence of discrete gradient flow structures arising from finite volume discretisations of Fokker-Planck equations on bounded convex domains $\Omega \subset \R^d$. 
For a given potential $V \in C(\bOmega)$ we consider the Fokker-Planck equation
\begin{align}	\label{eq:intro_FP}
	\partial_t \mu = \Delta \mu + \nabla \cdot ( \mu \nabla V ) \quad \text{on }(0,T) \times \Omega, \quad \mu|_{t=0} = \mu_0
\end{align} 
with no-flux boundary conditions, for $T \in (0,+\infty)$.  
Since the seminal works of Jordan, Kinderlehrer, and Otto \cite{jordan1998,O01} it is known that \eqref{eq:intro_FP} can be formulated as a gradient flow in the space of probability measures $\cP(\bOmega)$ endowed with the $2$-Wasserstein distance $\bW$ from optimal transport. 
The driving functional is the relative entropy with respect to the invariant measure
$\bfm(\ddd x) := \frac{1}{Z_V} \exp(-V(x)) \dd x$, where $Z_V$ is a normalising constant. 
Here we consider spatial discretisations of \eqref{eq:intro_FP} obtained by finite volume methods for a general class of admissible meshes. 
In this setting it is very well known that solutions to the discrete equations converge to solutions
of \eqref{eq:intro_FP}; see, e.g., \cite{eymard2000finite}, \cite{barth2018finite} for results in dimension $2$ and $3$, and \cite{droniou2018gradient} for more general situations.

The discretised Fokker-Planck equation 
can also be formulated as gradient flow, with respect to a suitable discrete dynamical transport distance $\cW_\cT$; see the independent works \cite{chow12,maas2011gradient,mielke2011gradient}.
Here we exploit this gradient flow structure to reprove the convergence of discrete to continuous Fokker-Planck equations via the method of \textit{evolutionary $\Gamma$-convergence}; i.e., rather than directly passing to the limit at the level of the gradient flow equation, we pass to the limit in the \emph{energy-dissipation inequality} that characterises the gradient flow structure.

This yields a new proof of convergence for the associated gradient flow equations, which does not rely on specific properties such as linearity or second order. 
Instead, the method is based on properties of functionals and tools such as $\Gamma$- and Mosco convergence.

The method of evolutionary $\Gamma$-convergence was pioneered by Sandier and Serfaty \cite{sandier04}; see \cite{mielke2016} for a survey on the topic and \cite{mielke2020exploring} for important refinements.
It has recently been applied to gradient system with a wiggly energy \cite{dondl2019gradient,mielke2020exploring}, coarse graining in linear fast-slow reaction systems \cite{mielke2019coarse}, diffusion in thin structures \cite{frenzel2018effective}, chemical reaction systems \cite{maas2020modeling}, and various other situations.

For Fokker-Planck equations in dimension $d = 1$, evolutionary $\Gamma$-convergence of the discrete gradient flow structures was proved by Disser and Liero \cite{disser2015}, for a specific class of finite-volume discretisations (cf. Section \ref{section:previous_works}). 
Their proof relies on interpolation techniques which do not easily extend to multiple dimensions.
Our proof is different and relies on compactness and representation theorems, in particular \cite[Theorem 2]{Bouchitte2002}, adapting ideas from \cite{Alicandro2004}.  
Our variational proof suggests the possibility of extending these techniques to more general settings, e.g., to higher order and/or nonlinear PDEs.

The fact that the method of evolutionary $\Gamma$-convergence of the gradient structures works on arbitrary admissible meshes is remarkable in view of recent work on the discrete-to-continuous limit of the associated transport distances. 
In fact, it was shown in \cite{gladbach2018} that the convergence of the discrete transport distances to the Wasserstein distance $\bW$ (in the limit of vanishing mesh size) requires a restrictive isotropy condition on the meshes; see \cite{gladbach2019} for explicit examples.
This discrepancy in convergence behaviour can be explained by a difference in regularity: 
to prove $\Gamma$-convergence of the discrete gradient flow structures one may exploit spatial smoothness assumptions on the discrete dynamics 
(in view of regularity results for the discrete gradient flows). 
By contrast, the transport costs on anisotropic meshes are minimised along highly oscillatory curves.

\vspace{3mm}
\noindent
\textbf{Organisation of the paper}.
In Section \ref{sec:gradient_flows} we discuss  gradient flow structures for continuous and discretised Fokker-Planck equations.
Section \ref{section:statements} contains the main result of this paper, namely, the evolutionary $\Gamma$-convergence of discrete to continuous gradient flow structures (Theorem \ref{thm:MAIN}).
This result relies on energy bounds (Theorem \ref{thm:lower_bounds}) which are proved using Mosco convergence results in the discrete-to-continuum limit that are of independent interest (Theorem \ref{thm:Mosco}). 
In Section \ref{section:previous_works} we discuss related work.
Section \ref{section:evolut_gamma_Wass} contains the proofs of Theorem \ref{thm:lower_bounds} and Theorem \ref{thm:MAIN}. 
The proof of Theorem \ref{thm:Mosco} is contained in Sections \ref{section:mosco},  \ref{section:moscop}, and \ref{section:compactness_representation}.

\section{Finite-volume discretisation of Wasserstein gradient flows}
\label{sec:gradient_flows}
In this section we describe the formulation of the Fokker-Planck equations as gradient flow in the space of  probability measures, both at the continuous and at the discrete level.
For the sake of clarity, our discussion will be informal. 
We refer to Section~\ref{section:statements} below for rigorous statements of the main results.

\subsection{Fokker-Planck equations as Wasserstein gradient flows}
On a bounded convex domain $\Omega \subset \R^d$ we consider the Fokker-Planck equation
\begin{align}	\label{eq:FP0}
	\partial_t \mu_t = \Delta \mu_t + \nabla \cdot ( \mu_t \nabla V )
\end{align}
with no-flux boundary conditions, where $V \in  C(\bOmega) \cap C^1(\Omega)$ is a driving potential.
This equation describes the time-evolution of the law of a Brownian particle in a potential field.
The steady state is given by the probability measure 
\begin{equation}		\label{eq:steady-state1}
\bfm \in \cP(\bOmega) \quad\text{with density}\quad \sigma(x) = \frac{\ddd \bfm}{\ddd x} = \frac{1}{Z_V} e^{-V(x)},
\end{equation}
where  $Z_V \in \R_+$ is a normalising constant. 

Since the seminal work of Jordan, Kinderlehrer and Otto \cite{jordan1998} it is known that \eqref{eq:FP0} is a gradient flow with respect to the Wasserstein distance $\bW$ from optimal transport.
In its dynamical formulation, $\bW$
is given by the \emph{Benamou--Brenier formula}
\begin{equation}	\label{eq:def W2 dynamical}
	\bW(\mu_0, \mu_1)^2 
	= \inf\bigg\{
		\int_0^1 
			\int_{\bOmega} |v_t(x)|^2 \dd \mu_t(x)
		\dd t \bigg\}, 
\end{equation}
where the infimum runs over all curves $(\mu_t)_t$ in the space of probability measures and all vector fields $(v_t)_t$ satisfying the continuity equation
\begin{align*}
	\label{eq:continuity equation cont}
	\partial_t \mu_t + \nabla \cdot (\mu_t v_t) = 0
\end{align*}
in the sense of distributions, with boundary conditions $\mu_t|_{t = 0} = \mu_0$ and $\mu_t|_{t = 1} = \mu_1$.
The driving functional in this gradient flow formulation is the relative entropy $\bH : \cP(\bOmega) \to [0, + \infty]$ given by
\begin{equation*}
\bH(\mu) := \begin{cases}
\int_{\bOmega} \rho(x) \log \rho(x) \dd {\bfm} & \text{if }\dd \mu = \rho \dd {\bfm},\\
+ \infty & \text{otherwise.}
\end{cases}
\end{equation*}
The gradient flow structure can be interpreted at various levels: the original formulation in \cite{jordan1998} was given in terms of a time-discrete minimising movement scheme. 
Another interpretation is in terms of Otto's formal infinite-dimensional Riemannian calculus on the Wasserstein space \cite{O01}.
Yet another approach relies on the metric formulation of gradient flows in terms of the
\emph{energy dissipation inequality} (EDI)
\begin{equation}
\label{eq:EDI}
	\bH(\mu_t) + \frac{1}{2} \int_0^T |\dot{\mu}_t|_{\bW}^2 + | \partial_{\bW} \bH (\mu_t)|^2 \dd t \leq \bH(\mu_0),
\end{equation}
where $|\dot{\mu}_t|$ denotes the $\bW$-metric derivative of the curve $\mu_t$ and $\partial_{\bW} \bH(\mu)$ the slope of the relative entropy, namely
\begin{align*}
	|\dot{\mu}_t|_{\bW}:= \lim_{h \rightarrow 0} \frac{1}{h} \bW(\mu_{t+h}, \mu_t), \qquad
	|\partial_{\bW} \bH (\mu)| := \limsup_{\nu \rightarrow \mu} \frac{[\bH(\mu) - \bH(\nu)]_{-}}{\bW(\mu,\nu)},
\end{align*}
where $[a]_- = \max\{0, -a\}$.
Writing $\rho = \frac{\ddd \mu}{\ddd \bfm}$, we have the identity
\begin{align}	\label{eq:slope_representations}
	|\partial_{\bW} \bH (\mu)|^2
	 = \bI(\mu), \quad 
	 \text{ where } \quad
	 \bI(\mu) := \int_{\Omega}
	 	 | \nabla \log \rho |^2	\rho 
	   \dd \bfm 
	 = 4\int_{\Omega}
	 	 | \nabla \sqrt{\rho} |^2  
	   \dd {\bfm}
\end{align}
is the \textit{relative Fisher information} with respect to $\bfm$.

\subsection*{$\bA$-$\bA^*$ formalism of gradient flows}

One can recast \eqref{eq:EDI} in terms of a suitable weighted Dirichlet energy $\bA$ and its Legendre transform $\bA^*$. 
Let us consider the energy functional
\begin{equation}	\label{eq:defA}
	\bA(\mu, \vphi) 
		:= \frac{1}{2} \int_{\bOmega} 
				| \nabla \vphi |^2 \dd \mu, 
		\quad \vphi \in C_c^{\infty}(\R^d), \; 
	\mu \in \cP(\bOmega),
\end{equation}
and its Legendre dual of $\bA$ with respect to the second variable:
\begin{align*}
	\bA^*(\mu, \eta) 
		= \sup_{\vphi \in C_c^{\infty}(\R^d)}
			 \{ \ip{ \vphi, \eta } - \bA(\mu, \vphi) \}
\end{align*}
for any distribution $\eta \in \cD'(\R^d)$. Note that
$
	\bA^*(\mu, w) 
		= \bA(\mu, \vphi) 
$
whenever $w = -\nabla \cdot ( \mu \nabla \vphi)$.
 The connection to Wasserstein geometry is given by the infinitesimal Benamou--Brenier formula
\begin{align*}
	\frac{1}{2}|\dot{\mu}_t|_{\bW}^2 
	= \bA^*(\mu_t, \partial_t \mu_t).
\end{align*}
Moreover, the relative Fisher information can be written as 
\begin{align}	\label{eq:Fisher_Onsager}
	\bI(\mu) = 2 \bA\big( \mu,-D\bH(\mu) \big),
\end{align}
where $D \bH(\mu) = \log \rho$ is the $L^2(\bfm)$-differential of $\bH$. 
Hence, it follows that \eqref{eq:EDI} can be stated equivalently as
\begin{equation}
\label{eq:contEDI}
	\bH(\mu_T) +  \int_0^T \bA^*(\mu_t, \dot{\mu}_t) + \bA\big(\mu_t, -D \bH(\mu_t)\big) \dd t \leq \bH(\mu_0).
\end{equation} 
This formulation is particularly convenient to relate the discrete framework to the continuous one, as we discuss in the next subsection.

\subsection{The discrete Fokker-Planck equation as gradient flow}

We consider a finite volume discretisation of $\bOmega$, closely following the setup in \cite{eymard2000finite}.
We thus consider finite partition $\cT$ of $\bOmega$ into sets (called cells) with nonempty and convex interior. 
Note that all interior cells are polytopes. 
We assume that $\cT$ is \emph{admissible}, in the sense that each of the cells $K \in \cT$ contains a point $x_K \in \overline{K}$ 
such that $x_K - x_L$ is orthogonal to the boundary surface $\Gamma_{KL} := \partial \overline{K} \cap \partial \overline{L}$, whenever $K$ and $L$ are \emph{neighbouring cells}, i.e., $\Haus^{d-1}(\Gamma_{KL}) > 0$. 
In this case we write $K \sim L$.
This  in a standard finite-volume setup.

\begin{figure}[h]
    \begin{center}
		\includegraphics[scale=0.4]{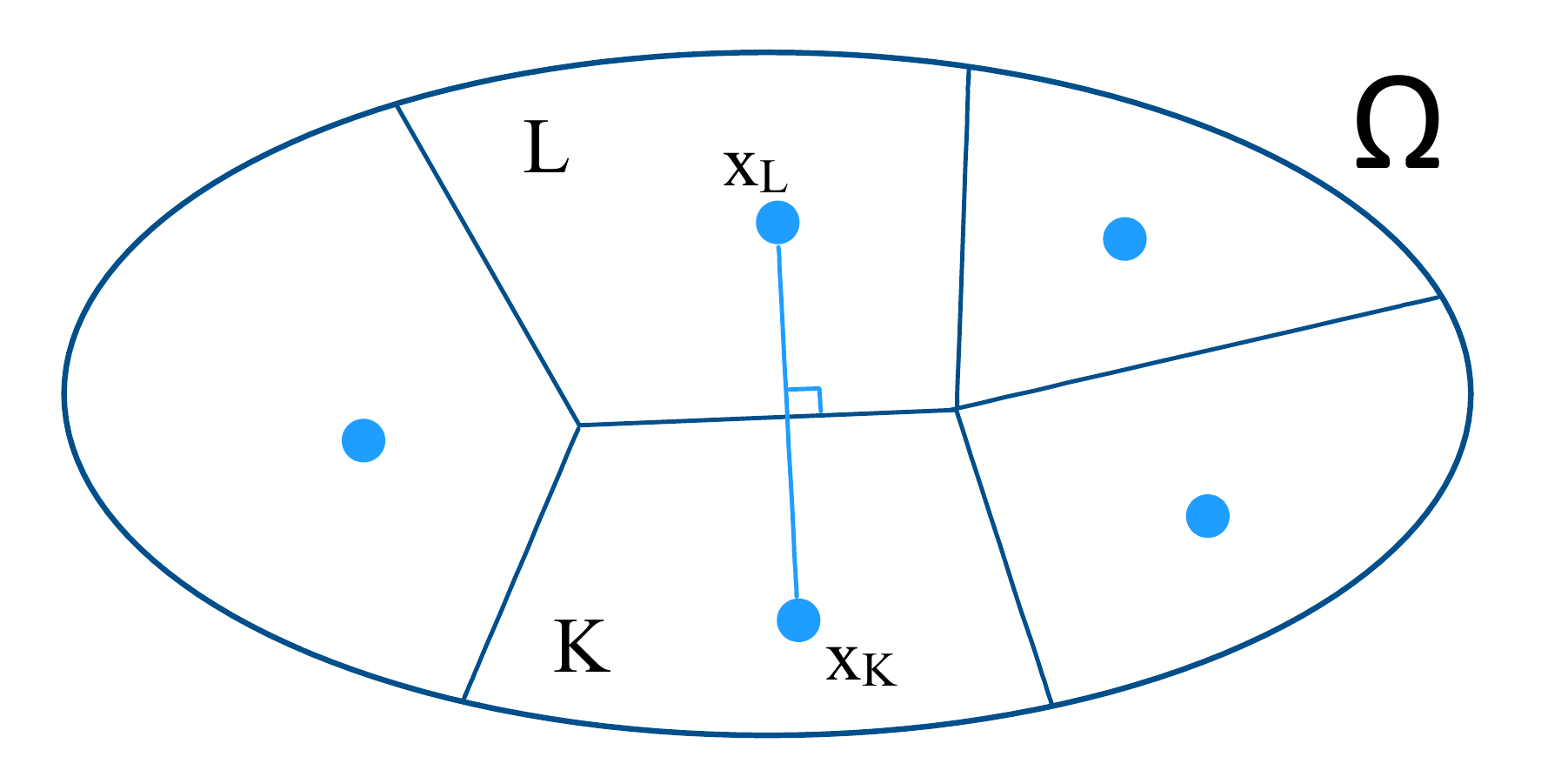}
    \end{center}
    \caption{An admissible mesh with cells $K, L, \ldots$ on a domain $\Omega \subset \R^d$.}
\label{fig:1D}
\end{figure}
An admissible mesh is said to be \emph{$\zeta$-regular} for some $\zeta \in (0,1]$, if the following conditions hold:
\begin{equation*}\begin{aligned}
	& (\textit{inner ball}) 
	& B\big(x_K, \zeta [\cT]\big) 
	& \subseteq K
	&   \text{ for all } K \in \cT, \\ 
    & (\textit{area bound}) 
    & \Haus^{d-1}(\Gamma_{KL}) 
    & \geq \zeta [\cT]^{d-1}
	& \text{ for all } K, L \in \cT \text{ with } K \sim L,
\end{aligned}\end{equation*}
where $[\cT] := \max \big\{ \diam(K) \ : \  K \in \cT \big\}$ denotes the size of the mesh. 

\subsection*{Discrete Fokker-Plank equations}

We consider discrete Fokker-Planck equations of the form 
\begin{align}
	\label{eq:discrete-FP}
		\ddt m_t(K)
			= \sum_{L \sim K} w_{KL} \bigg(
				\frac{m_t(L)}{\pi_\cT(L)}  
			-	\frac{m_t(K)}{\pi_\cT(K)} 
				\bigg).
\end{align}
Here, the probability measure $\pi_\cT \in \cP(\cT)$ is the canonical discretisation of $\bfm$, and the coefficients $w_{KL}$ are defined using the geometry of the mesh:
\begin{align}	\label{eq:defw}
	\pi_\cT(\{K\})
		:=  \bfm (K), \qquad
	w_{KL} 
		:= 
		\frac{\Haus^{d-1} ( \Gamma_{KL})}{|x_K - x_L|}
			 S_{KL} 
		\quad \text{ for } K \sim L.
\end{align} 
where $S_{KL}$ is a suitable average of the stationary density $\sigma$ on $K$ and $L$, i.e., 
$
	S_{KL} := \theta\big( \sigma(x_K), \sigma(x_L) \big)
$
for a fixed function $\theta : \R_+ \times \R_+ \to \R_+$ satisfying $\min\{a,b\} \leq \theta(a,b) \leq \max\{a,b\}$.
	
As \eqref{eq:discrete-FP} is the forward equation for a reversible Markov chain on $\cT$, it follows from 
the theory in \cite{maas2011gradient} and \cite{mielke2011gradient} that this equation is the gradient flow of the relative entropy $\cH_\cT:\cP(\cT) \to \R_+$ given by
\begin{align*}
	\cH_\cT(m) := \sum_{K \in \cT} m(K) \log\frac{m(K)}{\pi_\cT(K)}.
\end{align*}

The discrete analogue of \eqref{eq:defA} is given by 
the operator $\cA_{\cT}: \cP(\cT) \times \R^\cT \to \R_+$  defined by
\begin{equation}	\label{eq:defR_T}
	\cA_{\cT}(m, f) 
		= \frac14 \sum_{K, L \in \cT}
		\big(f(K) - f(L)\big)^2
		  \theta_{\log} 
		  \bigg( \frac{m(K)}{\pi_\cT(K)},
		  	     \frac{m(L)}{\pi_\cT(L)} \bigg) 
   		 w_{KL},
\end{equation}
where $\theta_{\log}(a,b) = \frac{a-b}{\log a - \log b}$ denotes the logarithmic mean.
Its Legendre transform 
$\cA_\cT^*: \cP(\cT) \times \R^\cT \to \R$ 
with respect to the second variable is given by
\begin{align*} 				
	\cA_\cT^*(m,\sigma) 
	= \sup_{f \in \R^\cT} 
		\bigg\{ 
			\sum_{K \in \cT} \sigma(K) f(K) 
			- \cA_\cT(m,f) 
		\bigg\}.
\end{align*}
In analogy to \eqref{eq:contEDI}, we can formulate the gradient flow structure for the discrete Fokker-Planck equation \eqref{eq:discrete-FP} in terms of the \emph{discrete EDI}
\begin{equation}	
\label{eq:discreteEDI}
\cH_\cT(m_T) +  \int_0^T \cA_{\cT}^*(m_t, \dot{m}_t) + \cA_{\cT}\big(m_t, -D \cH_\cT(m_t)\big) \dd t \leq \cH_\cT(m_0).
\end{equation}
The discrete counterpart of \eqref{eq:Fisher_Onsager} is the \textit{discrete Fisher information} $\cI_\cT(m)$  given by
\begin{align*}
	 \cI_\cT(m)
		 := 
	2 \cA_\cT\big( m, -D \cH_\cT (m) \big), 
	 \qquad m \in \cP(\cT).
\end{align*}

\section{Statement of the results}	\label{section:statements}

In this section we present our main result, the evolutionary $\Gamma$-convergence of the gradient flow structures in the discrete-to-continuum limit for Fokker-Planck equations on a bounded convex domain $\Omega \subset \R^d$.

Let $\cT$ be an admissible mesh on $\Omega$.
To compare measures on different spaces we introduce the canonical projection and embedding operators $\mathsf{P}_{\cT}$ and $\mathsf{Q}_{\cT}$ defined by 
\begin{equation}
	\begin{aligned}	
		\label{eq:def P projection}
		& \mathsf{P}_\cT : \cM(\bOmega) \to \cM(\cT) 
		& 
		\big(\mathsf{P}_\cT \mu\big) (K) 
		&
		 = \mu(K)
		& \text{for } K & \in \cT,\\
		& \mathsf{Q}_\cT : \cM(\cT) \to \cM(\bOmega) 
		&
		\mathsf{Q}_{\cT} m 
		& = \sum_{K \in \cT} m(K) \cU_{K}
		& \text{for } m & \in \cP(\cT).
	\end{aligned}
\end{equation}
Here, $\cU_{K}$ denotes the uniform probability measure on $K \subset \bOmega$, and $\cM(\cX)$ denotes the set of finite measures on the space $\cX$.
In particular, $\mathsf{Q}_{\cT}$ is a right-inverse of $\mathsf{P}_{\cT}$ and both mappings are mass and positivity preserving. By construction we have $\pi_\cT := \mathsf{P}_\cT {\bfm}$. 

It is also useful to introduce an operator for the piecewise constant embedding of functions:
\begin{align*}
	Q_\cT : \R^\cT \to L^\infty(\bOmega), \qquad
	\big(Q_\cT f\big) (x) = f(K) \quad  \text{for } x \in K, \ K \in \cT.
\end{align*}

Let us now consider a sequence of admissible, $\zeta$-regular meshes $\cT_N$ with mesh size $[\cT_N] \rightarrow 0$ as $N \to \infty$. To avoid towers of subscripts, we simply write $\cA_N := \cA_{\cT_N}$, $\mathsf{P}_N := \mathsf{P}_{\cT_N}$, etc. 

\subsection{Evolutionary $\Gamma$-convergence of discrete Fokker-Planck equations}

In this subsection we fix a reference probability $\bfm \in \cP(\bOmega)$ with density $\sigma(x) = \frac{\ddd \bfm}{\ddd x} = \frac{1}{Z_V} e^{-V(x)}$ as in \eqref{eq:steady-state1}.
For neighbouring cells $K, L \in \cT_N$ we fix $S_{KL} > 0$ such that 
\begin{align}
	\label{eq:S-bounds}
	\min\big\{ \sigma(x_K), \sigma(x_L) \big\} 
		\leq 
	S_{KL} 
		\leq
	\max\big\{ \sigma(x_K), \sigma(x_L) \big\} 
\end{align}
as in Section \ref{sec:gradient_flows}.

We start by collecting some conditions of the densities that will be imposed in the sequel.

\begin{definition}[Assumptions on approximating sequences]
Let $(\cT_N)_N$ be a vanishing sequence of $\zeta$-regular meshes for some $\zeta > 0$.
For a sequence of measures $m_N \in \cP(\cT_N)$ with densities $r_N = \frac{\ddd m_N}{\ddd \pi_N}$, we consider the following conditions:
\begin{enumerate}[(i)]
\item
The \emph{density lower bound} holds if, for some $\underline{k} > 0$,
\begin{align} \tag{lb}  \label{eq:lb}
		\inf_{K \in \cT_N} r_N(K) \geq \underline{k} > 0 
			\quad \forall N \in \N.
\end{align}
\item
The \emph{density upper bound} holds if, for some $\bar{k} < \infty$,
\begin{align} \tag{ub} \label{eq:ub}
		\sup_{K \in \cT_N} r_N(K)
			\leq \bar{k} < +\infty 
		\quad \forall N \in \N.
\end{align}
\item 
The \emph{neighbour continuity bound} holds if 
	\begin{align} \tag{nc} \label{eq:nc}
		\lim_{N \to \infty} 
		\sup_{
				\substack{ 
					K,L \in \cT_N \\ 
					K \sim L
				}
			}
		|r_N (K) - r_N (L)| = 0.
	\end{align}
\item 
The \emph{pointwise condition} holds if there exists a measure  $\mu \in \cP(\bOmega)$ with density $\rho = \frac{\ddd \mu}{\ddd \bfm}$ such that $\mu_N := \mathsf{Q}_N m_N \weakto \mu$ and, 
 for a.e. $x_0 \in \Omega$:
	\begin{align}	\tag{pc} \label{eq:pc}
		\lim_{\eps \rightarrow 0} 
			\liminf_{N \to \infty} 
				\sup_{x \in Q_\eps (x_0)} \rho_N(x)
	\leq \rho(x_0) 
	\leq \lim_{\eps \rightarrow 0} 
			\limsup_{N \to \infty} 
				\inf_{x \in Q_\eps (x_0)} \rho_N(x). 
	\end{align}
Here, $Q_\eps (x_0)$ denotes the open cube of side-length $\eps > 0$ centered at $x_0$, and $\rho_N(x) := r_N(K)$ for $x \in K$. 
\end{enumerate}
\end{definition}

\begin{remark}\label{rem:pointwise}
These conditions do not depend on the reference measure $\bfm$, except for the value of the constants $\underline{k}$ and $\overline{k}$.
Clearly, the pointwise condition holds if $\rho$ belongs to $C(\bOmega)$ and $\rho_n$ converges uniformly to $\rho$. 
Moreover, this condition implies subsequential pointwise convergence of $\rho_N$ to $\rho$.
\end{remark}

We now present the crucial $\Gamma$-liminf inequalities for the functionals in the EDI \eqref{eq:discreteEDI}.  

\begin{theorem}[Lower bounds for functionals] \label{thm:lower_bounds}
Let $(\cT_N)_N$  be a vanishing sequence of $\zeta$-regular meshes for some $\zeta > 0$.
The following assertions hold for any $\mu \in \cP(\bOmega)$ and $m_N \in \cP(\cT_N)$ such that $\mathsf{Q}_N m_N  \weakto \mu$ as $N \to \infty$:
\begin{enumerate}[(i)]
	\item 
	The relative entropy functionals satisfy the liminf inequality
	\begin{align} \label{eq:lower bound entropy_0}
		\liminf_{N \to \infty} \cH_N(m_N) \geq \bH(\mu).
	\end{align}
	\item 
	Assume \eqref{eq:nc}. 
	The Fisher information functionals satisfy the liminf inequality 
	\begin{align} \label{eq:lower bound Fisher info_0}
		\liminf_{N \to \infty} \cI_N(m_N) \geq \bI(\mu).
	\end{align}

	\item \label{item:speed}
	Assume \eqref{eq:lb}, \eqref{eq:ub}, and \eqref{eq:pc}. For any $\eta \in L^2(\Omega)$ and any $e_N  \in \R^{\cT_N}$ such that $\Q_N e_N \weakto \eta$ in $L^2(\Omega)$ we have
	\begin{align}	\label{eq:lower bound dual action_0}
		\liminf_{N \to \infty}  
		  \cA_N^*(m_N, e_N) \geq \bA^*(\mu, \eta).
	\end{align}
	The same bound  holds without assuming \eqref{eq:lb} if $(e_N)_N$ satisfies the additional assumption $
		\limsup_{N \to \infty} \cA_N^*(\pi_N, e_N) < +\infty.
	$
\end{enumerate}
\end{theorem}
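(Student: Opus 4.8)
\textbf{Strategy.} The three parts are of increasing difficulty; I treat them in order, isolating the analytic core of each. Throughout I write $\rho_N = Q_N r_N$ for the piecewise-constant density against $\bfm$ and note that $\mathsf{Q}_N m_N \weakto \mu$ translates (via the mass- and positivity-preserving nature of $\mathsf{Q}_N$, and $[\cT_N] \to 0$) into weak convergence of the measures $\rho_N \bfm \weakto \rho\bfm$.

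\textbf{Part (i): entropy.} This is the softest statement and should follow from lower semicontinuity of relative entropy under weak convergence together with a Jensen-type control of the discretisation error. The cleanest route: $\cH_N(m_N)$ is exactly the relative entropy of $\mathsf{Q}_N m_N$ with respect to $\mathsf{Q}_N \pi_N$ — more precisely, writing $\widehat{\bfm}_N := \mathsf{Q}_N \pi_N$, one has $\cH_N(m_N) = \Ent(\mathsf{Q}_N m_N \mid \widehat{\bfm}_N)$ because both measures are piecewise uniform on each cell, so the cell-wise densities coincide with $r_N(K)$. Since $\widehat{\bfm}_N \to \bfm$ setwise (indeed in total variation, as $\sigma$ is continuous on the compact $\bOmega$ and $[\cT_N]\to 0$), joint lower semicontinuity of relative entropy in both arguments (e.g.\ via the Donsker–Varadhan variational formula $\Ent(\mu\mid\nu) = \sup_{g\in C_b}\{\int g\dd\mu - \log\int e^g\dd\nu\}$) yields $\liminf_N \cH_N(m_N) \ge \Ent(\mu\mid\bfm) = \bH(\mu)$.

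\textbf{Part (ii): Fisher information.} Here I would invoke the Mosco convergence result, Theorem~\ref{thm:Mosco}, which is stated precisely to cover the discrete Dirichlet-type energies underlying $\cI_N$. Unwinding the definition, $\cI_N(m_N) = 2\cA_N(m_N, -D\cH_N(m_N))$ is, up to the logarithmic-mean weights, a discrete Dirichlet energy of $\log r_N$ — or, after the chain-rule identity $\cI_N(m_N) = \tfrac14\sum_{K\sim L}(\log r_N(K)-\log r_N(L))^2\theta_{\log}(r_N(K),r_N(L))w_{KL}$, a discrete Dirichlet energy of $\sqrt{r_N}$ up to controllable errors. The hypothesis \eqref{eq:nc} is exactly what makes the logarithmic mean $\theta_{\log}(r_N(K),r_N(L))$ asymptotically comparable to $S_{KL}$-weighted pointwise values, so that the discrete energy is genuinely of the form handled by Theorem~\ref{thm:Mosco}; the $\Gamma$-$\liminf$ half of Mosco convergence then delivers \eqref{eq:lower bound Fisher info_0} against the continuous Fisher information $4\int_\Omega|\nabla\sqrt\rho|^2\dd\bfm = \bI(\mu)$. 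If $\liminf_N\cI_N(m_N)=+\infty$ there is nothing to prove, so one may assume a uniform bound, which (again using \eqref{eq:nc}) feeds the compactness part of Theorem~\ref{thm:Mosco} to identify the limit as $\sqrt\rho \in H^1$.

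\textbf{Part (iii): dual action — the main obstacle.} This is the hard part. The difficulty is that $\cA_N^*(m_N,e_N)$ is a \emph{dual} (convex-conjugate) functional, so one cannot directly pass to the $\liminf$; instead one must pass to the $\limsup$ in the primal $\cA_N$. Concretely, by Legendre duality, for every smooth test function $\vphi\in C_c^\infty(\R^d)$,
\begin{equation*}
  \cA_N^*(m_N,e_N) \ge \sum_{K\in\cT_N} e_N(K)\,(\nabla^{\cT_N}\vphi)(K) - \cA_N\big(m_N, \vphi|_{\cT_N}\big),
\end{equation*}
where $\vphi|_{\cT_N}(K):=\vphi(x_K)$ and the linear term is the natural discrete pairing. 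I would (a) show the linear term converges to $\int_\Omega \eta\,(-\nabla\cdot(\mu\nabla\vphi))$ — for this the weak $L^2$-convergence $\Q_N e_N \weakto \eta$ must be paired against a discrete gradient that converges \emph{strongly} in $L^2$ to $\nabla\vphi$, which is where the $\zeta$-regularity of the meshes and the orthogonality property of admissible meshes enter decisively; and (b) show $\cA_N(m_N,\vphi|_{\cT_N}) \to \tfrac12\int_{\bOmega}|\nabla\vphi|^2\dd\mu$, using \eqref{eq:pc} to upgrade $\rho_N\bfm\weakto\rho\bfm$ to enough pointwise control that the cell-wise densities times the (strongly convergent) discrete gradients integrate correctly, together with \eqref{eq:ub} for uniform integrability. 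Then taking the supremum over $\vphi$ and recognising the continuous Legendre transform gives $\liminf_N \cA_N^*(m_N,e_N) \ge \sup_\vphi\{\ip{\vphi,\eta}-\bA(\mu,\vphi)\} = \bA^*(\mu,\eta)$. The role of \eqref{eq:lb} is to keep the logarithmic-mean weights $\theta_{\log}(r_N(K),r_N(L))$ bounded below, so that $\cA_N(m_N,\cdot)$ does not degenerate; the alternative hypothesis $\limsup_N\cA_N^*(\pi_N,e_N)<\infty$ replaces this by an a priori bound that confines $e_N$ to the range where the density-weights are automatically non-degenerate (morally, $e_N$ is a discrete divergence $-\nabla^{\cT_N}\!\cdot(\pi_N\nabla^{\cT_N}\psi_N)$ with $\psi_N$ of bounded Dirichlet energy), and one argues by a truncation/density approximation of $e_N$ by such fields. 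The technically delicate point — and the one I expect to consume most of the work — is the strong $L^2$-convergence of discrete gradients of the fixed smooth test function on general $\zeta$-regular (possibly anisotropic) meshes, and the simultaneous handling of the density weights $\theta_{\log}(r_N(K),r_N(L))$, which are neither the arithmetic mean nor a single pointwise value; controlling the mismatch between these weights and $\rho$ along neighbouring cells, uniformly, is precisely where \eqref{eq:pc} and \eqref{eq:ub} must be used carefully.
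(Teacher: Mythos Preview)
Your treatment of part (i) matches the paper exactly: $\cH_N(m_N)=\Ent(\mathsf{Q}_N m_N\mid \mathsf{Q}_N\pi_N)$ and joint weak lower semicontinuity of relative entropy does the job.

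For part (ii) your outline is along the right lines but misses a structural point. Theorem~\ref{thm:Mosco} requires \eqref{eq:lb}, \eqref{eq:ub}, \eqref{eq:pc} on the sequence $m_N$, none of which are assumed here; you cannot apply it to $\cA_N(m_N,\cdot)$ directly. The paper instead applies Mosco convergence with the \emph{reference} measures $\pi_N$ (for which all three conditions hold trivially with limit $\bfm$). The bridge is the identity $(\log a-\log b)^2\theta_{\log}(\sqrt a,\sqrt b)^2=4(\sqrt a-\sqrt b)^2$, which gives $\tilde\cA_N(m_N,-D\cH_N(m_N))=4\cE_N(\sqrt{r_N})$ with $\cE_N=\cA_N(\pi_N,\cdot)$; then $|\theta_{\log}(a,b)-\theta_{\log}(\sqrt a,\sqrt b)^2|\le |a-b|\le\frac{|a-b|}{\min(a,b)}\tilde\theta(a,b)$ together with \eqref{eq:nc} \emph{and} \eqref{eq:lb} shows $|\tfrac12\cI_N(m_N)-4\cE_N(\sqrt{r_N})|\le\frac{4\eps_N}{\underline k}\cE_N(\sqrt{r_N})$. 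So \eqref{eq:lb} is genuinely needed for the comparison step. The paper then removes \eqref{eq:lb} a posteriori via convexity of $m\mapsto\cI_N(m)$: replace $m_N$ by $m_N^\delta=(1-\delta)m_N+\delta\pi_N$, apply the bound, and let $\delta\to 0$ using lower semicontinuity of $\bI$. Your sketch omits both the reduction to $\cE_N$ and this convexity trick.

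For part (iii) there is a concrete error and a strategic difference. The Legendre inequality reads $\cA_N^*(m_N,e_N)\ge \sum_K e_N(K)f_N(K)-\cA_N(m_N,f_N)$: the pairing is against $f_N$ itself, not against a discrete gradient, and its limit is $\ip{\eta,\vphi}$, not $\int\eta\,(-\nabla\cdot(\mu\nabla\vphi))$. More importantly, the paper does not attempt to verify $\cA_N(m_N,P_N\vphi)\to\bA(\mu,\vphi)$ directly (which would essentially duplicate the characterisation argument in Section~\ref{section:compactness_representation}); instead it invokes the \emph{recovery-sequence} half of Theorem~\ref{thm:Mosco}: for each $\vphi$ there exist $f_N$ with $Q_Nf_N\to\vphi$ strongly in $L^2$ and $\limsup_N\cA_N(m_N,f_N)\le\bA(\mu,\vphi)$, after which weak-$L^2$ times strong-$L^2$ gives $\ip{e_N,f_N}\to\ip{\eta,\vphi}$ and taking the sup over $\vphi$ concludes. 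This is where \eqref{eq:lb}, \eqref{eq:ub}, \eqref{eq:pc} enter, as hypotheses of Theorem~\ref{thm:Mosco}. For the variant without \eqref{eq:lb}, your truncation idea is not what the paper does: it again uses convexity, now of $m\mapsto\cA_N^*(m,e_N)$ (which follows from concavity of $\theta_{\log}$), replacing $m_N$ by $m_N^\delta=(1-\delta)m_N+\delta\pi_N$ and using the assumed bound $\limsup_N\cA_N^*(\pi_N,e_N)<\infty$ to control the extra term, then sending $\delta\to 0$ via lower semicontinuity of $\bA^*(\cdot,\eta)$.
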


\begin{remark} 
	We emphasize that the lower bound \eqref{eq:lb} is not required to obtain \eqref{eq:lower bound entropy_0} and \eqref{eq:lower bound Fisher info_0}.
\end{remark}
	
\begin{remark}	\label{rem:lower bound with isotropy}
	The bound \eqref{eq:lower bound dual action_0} can be obtained without assuming \eqref{eq:ub} and \eqref{eq:pc} if the mesh satisfies the so-called asymptotic isotropy condition \eqref{eq:isotropy_condition}; cf. Definition \ref{def:isotropy_local} and Proposition \ref{prop:action bounds} below. 
\end{remark}

\begin{remark}
	If $\mu \in \cP(\bOmega)$ is absolutely continuous with respect to the Lebesgue measure 
	and $m_N = \mathsf{P}_N \mu$, 
	\eqref{eq:lower bound dual action_0} can be proved under the assumptions that 
	$\eta \in \cM(\Omega)$ and 
	$e_N \in \R^{\cT_N}$ satisfies $\mathsf{Q}_N e_N \weakto \eta$ in $\cD^{'}(\Omega)$. 
	This is a consequence of an explicit construction of a recovery sequence for the action $\cA_N(m_N, \cdot)$ (as in the isotropic case in Proposition \ref{prop:action bounds}); cf. Remark \ref{rem:limsup}.
\end{remark}

Using Theorem \ref{thm:lower_bounds} we obtain our main result, the evolutionary $\Gamma$-convergence of the discrete gradient flow structures. 
The following result shows that one can pass to the limit in each of the terms of the discrete gradient flow formulation \eqref{eq:discreteEDI} and recover the Wasserstein gradient flow structure as a consequence.

\begin{theorem}[Evolutionary $\Gamma$-convergence]
\label{thm:MAIN}
Let $T > 0$ and consider a vanishing sequence of $\zeta$-admissible meshes $(\cT_N)_N$.  
Fix an initial measure $\mu_0 \in \cP(\bOmega)$ such that $\bH(\mu_0) < +\infty$, together with measures $m_0^N \in \cP(\cT_N)$ for $N \geq 1$, that are well-prepared in the sense that 
\begin{align*}
	\mathsf{Q}_N m_0^N \weakto \mu_0
	\tand
	\lim_{N \to \infty} \cH_N(m_0^N) = \bH(\mu_0).
\end{align*}
For each $N \geq 1$, let $(m_t^N)_{t \in [0,T]}$ be the solution to the discrete Fokker-Planck equation \eqref{eq:discrete-FP} with initial datum $m_0^N$, which satisfies the EDI
\begin{align*}
	\cH_N(m_t^N) 
			+  \int_0^T \cA_N^*(m_t^N, \dot{m}_t^N) 
					  + \cA_N\big(m_t^N, -D \cH_N(m_t^N)\big) \dd t 
	\leq \cH_N(m_0^N).
\end{align*}
Then: 
\begin{enumerate}[(i)]
	\item The sequence of curves $(\mu^N )_N$ defined by $\mu_t^N:= \mathsf{Q}_N m_t^N$ is compact in the space $C\big([0,T];(\cP(\bOmega),\bW) \big)$. 
	Thus, up to a subsequence, we have
	\begin{equation}	
		\label{eq:thm_compactness}
			\sup_{t \in [0,T]}
			\bW\big( \mu_t^N , \mu_t \big) \to 0
				\quad \text{as }N \to \infty.
	\end{equation}
	\item The following estimates hold:
	\begin{subequations}	\label{eq:all}
	\begin{align} 
		\label{item:energy}
		& \textrm{Entropy:}   &
			\liminf_{N \to \infty} \cH_N(m_t^N) 
			 &
			\geq  \bH(\mu_t) 
				\quad \forall t \in [0,T], \\
		\label{item:Fisher} 
		& \textrm{Fisher I.:}   &
		\liminf_{N \to \infty} 
			\int_0^T \cA_N\big(m_t^N, -D \cH_N(m_t^N)\big) \dd t 
			 &
			\geq  \int_0^T\bA\big(\mu_t, -D \bH(\mu_t)\big) \dd t,\\
		\label{item:metric derivative}  				
		& \textrm{Speed:}   &
			\liminf_{N \to \infty}  
			\int_0^T \cA_N^*(m_t^N, \dot{m}_t^N) \dd t
			 &
			\geq  \int_0^T \bA^*(\mu_t, \dot{\mu}_t)   \dd t . 
	\end{align}
	\end{subequations}
	\item The curve $(\mu_t)$ solves the EDI \eqref{eq:contEDI}, and hence, the Fokker-Planck equation \eqref{eq:intro_FP}.
\end{enumerate}
\end{theorem}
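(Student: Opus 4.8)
\textbf{Proof plan for Theorem \ref{thm:MAIN}.}

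The strategy is the classical Sandier--Serfaty scheme for evolutionary $\Gamma$-convergence, using the $\bA$-$\bA^*$ formulation of the EDI. First I would establish compactness of the curves $(\mu^N)_N$ in $C([0,T];(\cP(\bOmega),\bW))$. The key input is a uniform a priori bound: since $\cH_N(m_0^N) \to \bH(\mu_0) < +\infty$, the discrete EDI gives a uniform bound on $\int_0^T \cA_N^*(m_t^N,\dot m_t^N)\dd t$, which controls the discrete metric speed of the curves; combined with the total mass constraint this yields equi-continuity with respect to $\cW_{\cT_N}$, hence (comparing $\cW_{\cT_N}$ to $\bW$ via $\mathsf{Q}_N$, using $\zeta$-regularity so that the embedding does not distort distances in the wrong direction, e.g. $\bW(\mathsf{Q}_N m,\mathsf{Q}_N \tilde m)\lesssim \cW_{\cT_N}(m,\tilde m) + [\cT_N]$) gives $\bW$-equi-continuity of $\mu^N$ on $[0,T]$. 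A refined Arzelà--Ascoli argument in $(\cP(\bOmega),\bW)$ (which is compact since $\bOmega$ is bounded) then yields a subsequence with $\sup_t \bW(\mu_t^N,\mu_t)\to 0$ for some limit curve $(\mu_t)_{t\in[0,T]}$, proving (i). At this point one also records that $(\mu_t)$ has finite $\bW$-metric speed in $L^2(0,T)$, hence is absolutely continuous.

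Next, for (ii), I would pass to the limit in each of the three terms of the discrete EDI using Theorem \ref{thm:lower_bounds}. The entropy bound \eqref{item:energy} is immediate from Theorem \ref{thm:lower_bounds}(i) applied at each fixed $t$, since $\mathsf{Q}_N m_t^N \weakto \mu_t$. For the Fisher information term \eqref{item:Fisher} one wants to integrate Theorem \ref{thm:lower_bounds}(ii) in time; the point is to verify the neighbour-continuity hypothesis \eqref{eq:nc} for $m_t^N$ for a.e.\ $t$, which is where discrete regularity theory for the gradient flow enters (parabolic smoothing for \eqref{eq:discrete-FP} gives, after an initial layer, control on discrete gradients uniformly in the mesh — here the $\zeta$-regularity is essential), and then applying Fatou's lemma in $t$ to move the liminf inside the integral. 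Similarly, for the speed term \eqref{item:metric derivative} one applies Theorem \ref{thm:lower_bounds}(iii): writing $e_N := \dot m_t^N$ in the appropriate identification, one must check $\Q_N e_N \weakto \dot\mu_t$ in $L^2$, or use the alternative hypothesis $\limsup_N \cA_N^*(\pi_N, e_N) < \infty$ which follows from the energy-dissipation bound; again Fatou gives the time-integrated inequality, with the limit of the discrete ``tangent velocities'' identified with $\partial_t \mu_t$ via the continuity equation and the lower semicontinuity of $\bA^*$.

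Finally, for (iii), I would assemble the chain of inequalities: using lower semicontinuity of $\cH_N$ along the flow, the well-preparedness at $t=0$, the three liminf inequalities from (ii), and the discrete EDI, one obtains
\begin{align*}
	\bH(\mu_T) + \int_0^T \bA^*(\mu_t,\dot\mu_t) + \bA\big(\mu_t,-D\bH(\mu_t)\big)\dd t
	&\leq \liminf_{N\to\infty}\Big[\cH_N(m_T^N) + \int_0^T \cA_N^*(m_t^N,\dot m_t^N) + \cA_N\big(m_t^N,-D\cH_N(m_t^N)\big)\dd t\Big] \\
	&\leq \lim_{N\to\infty}\cH_N(m_0^N) = \bH(\mu_0),
\end{align*}
which is precisely \eqref{eq:contEDI}. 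Since the EDI characterises the Wasserstein gradient flow of $\bH$ (by the general theory, as the opposite inequality always holds along absolutely continuous curves with finite entropy by the chain rule, provided $\bH$ is $\bW$-geodesically convex — which holds here by convexity of $\Omega$ and McCann's condition), the limit curve $(\mu_t)$ is the gradient flow, i.e.\ the solution of \eqref{eq:intro_FP}. Uniqueness of this solution then shows the whole sequence converges, not just a subsequence.

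\textbf{Main obstacle.} The delicate point is the Fisher-information term \eqref{item:Fisher}: verifying that the discrete solutions $m_t^N$ satisfy the neighbour-continuity condition \eqref{eq:nc} for a.e.\ $t$, uniformly in $N$, requires quantitative discrete parabolic regularity estimates (an Aronson--Serrin / De Giorgi--Nash--Moser type bound, or discrete energy estimates for the flow) that are stable in the mesh size — this is the technical heart and must exploit $\zeta$-regularity and the structure of \eqref{eq:discrete-FP}. A secondary difficulty is the correct identification of the discrete velocities $\dot m_t^N$ with a distributional object converging to $\partial_t\mu_t$, so that Theorem \ref{thm:lower_bounds}(iii) applies with the right weak limit; handling the initial time layer (where discrete regularity may degenerate) without losing the endpoint entropy $\bH(\mu_T)$ also requires care.
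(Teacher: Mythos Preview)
Your overall strategy is correct and matches the paper's, and you have correctly identified the discrete parabolic regularity (H\"older estimate and ultracontractivity, Proposition~\ref{prop:hold_discrete} in the paper) as the technical heart. The entropy and Fisher bounds go exactly as you say. However, your treatment of the speed term \eqref{item:metric derivative} has a genuine gap.

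You write ``again Fatou gives the time-integrated inequality''. But Fatou would require, for a.e.\ fixed $t$, the pointwise liminf inequality $\liminf_N \cA_N^*(m_t^N,\dot m_t^N)\geq \bA^*(\mu_t,\dot\mu_t)$, and to invoke Theorem~\ref{thm:lower_bounds}\eqref{item:speed} at a fixed $t$ you need $Q_N\dot m_t^N\weakto \dot\mu_t$ weakly in $L^2(\Omega)$ \emph{for that} $t$. What you actually obtain from the a~priori bounds is only weak convergence of $t\mapsto \dot\rho_t^N$ in the \emph{space--time} Hilbert space $L^2((\delta,T);L^2(\Omega,\bfm))$; pointwise-in-$t$ weak convergence of the velocities does not follow. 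The paper circumvents this by invoking an abstract evolutionary $\Gamma$-liminf result (Proposition~\ref{prop:evolGammaliminf}, from Stefanelli): if $g_N(t,\cdot)$ are convex, lower semicontinuous, and satisfy the pointwise weak-liminf inequality, then $\vphi_N\weakto\vphi$ in $L^2(0,T;\cX)$ suffices to pass to the limit in $\int_0^T g_N(t,\vphi_N(t))\,\dd t$. This convexity-based tool is what replaces your Fatou step and is the missing ingredient in your plan.

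Two further points you underplay. First, Theorem~\ref{thm:lower_bounds}\eqref{item:speed} also requires the pointwise condition \eqref{eq:pc}, which is not automatic: the paper verifies it using the uniform H\"older bound on $\rho_t^N$ combined with weak$^*$ compactness in $L^\infty$, and this is a substantial step you do not mention. Second, since \eqref{eq:lb} fails for the raw flow, the paper regularises via $m_t^{N,\alpha}=(1-\alpha)m_t^N+\alpha\pi_N$, applies the argument with $\alpha>0$ and on $(\delta,T)$, and then removes both regularisations using convexity of $m\mapsto\cA_N^*(m,\cdot)$ together with a uniform bound $\sup_{N,t\geq\delta}\cA_N^*(\pi_N,\dot m_t^N)<\infty$ obtained from self-adjointness and ultracontractivity. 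Your mention of the ``alternative hypothesis'' gestures at this but does not supply the mechanism. Finally, for part~(i) the paper does not compare $\cW_{\cT_N}$ to $\bW$ as you propose; instead it proves a coarse action bound after Neumann heat regularisation (Lemma~\ref{lemma:coarse bound dual action}), obtains equicontinuity of $H_{[\cT_N]}\mu_t^N$, and removes the smoothing via $\bW(H_\eps\nu,\nu)\lesssim\sqrt\eps$.
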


\begin{remark}\label{rem:well-prepared}
	The well-preparedness assumption holds in the special case where the discrete measures are defined by $m_0^N := \mathsf{P}_N \mu_0$ as in \eqref{eq:def P projection}. 
	Indeed, in that case we have $\cH_N(m_0^N) = \bH(\mathsf{Q}_N \mathsf{P}_N \mu_0)$, so that the convergence of the relative entropy functionals follows from Jensen's inequality.
\end{remark}

The proofs of Theorem \ref{thm:lower_bounds} and Theorem \ref{thm:MAIN} appear in Section \ref{section:evolut_gamma_Wass}.
They rely on a Mosco convergence result for discrete energy functionals of independent interest, which we will now describe.

\subsection{Mosco convergence of Dirichlet energy functionals}

Fix an absolutely continuous probability measure $\mu \in \cP(\bOmega)$, 
and assume that its density $\upsilon$ with respect to the  Lebesgue measure on $\bOmega$ satisfies the two-sided bounds
\begin{align*}
	0 
	< \underline{c} 
	\leq \upsilon(x)
	\leq \overline{c} 
	< \infty
	\text{ for all } x \in \bOmega.	
\end{align*}
We consider the \emph{continuous Dirichlet energy} $\bF_\mu : L^2(\Omega) \rightarrow \R_+ \cup \{ +\infty \} $ given by
\begin{equation}
\bF_\mu(\vphi) := 
\bA(\mu, \vphi)
= \begin{cases} 
\displaystyle	\frac{1}{2} \int_{\Omega} | \nabla \vphi|^2 \dd \mu &\text{if }\vphi \in H^1(\Omega),\\
+\infty &\text{otherwise}
\end{cases}
\end{equation}
where $\bA$ is defined in \eqref{eq:defA}. 

Similarly, for a $\zeta$-regular mesh $\cT$ and a probability measure $m \in \cP(\cT)$, we consider the \emph{discrete Dirichlet energy} $\cF_\cT : \R^\cT \to \R_+$ defined by
\begin{equation}
	\cF_\cT(f) = 
		\frac{1}{4} 
			\sum_{K,L \in \cT} 
				\big( f(K) - f(L) \big)^2
				U_{KL}
				\frac{\sH^{d-1}(\Gamma_{KL})}{|x_K - x_L|}	
\end{equation}
where 
$
\min\big\{ 	\frac{m(K)}{|K|}, 
			\frac{m(L)}{|L|} 
	\big\} 
\leq 
U_{KL} 
\leq
\max\big\{ 	\frac{m(K)}{|K|}, 
			\frac{m(L)}{|L|} 
	\big\}
$.
In the special case where 
$U_{KL}$ is defined in terms of the logarithmic mean of $r_K$ and $r_L$, namely, 
$U_{KL} = \frac{r_K - r_L}{\log r_K - \log r_L}S_{KL}$ with $r_K = \frac{m(K)}{\pi_\cT(K)}$, 
this functional is related to the functional $\cA_\cT$ by
\begin{align}
	\label{eq:defFT}
			\cF_\cT(f) := \cA_\cT(m, f).
\end{align}
To compare the discrete and the continuous functionals we consider the embedded funtionals $\tbF_\cT : L^2(\Omega) \to \R_+ \cup \{ + \infty \} $ defined by
\begin{equation}		\label{eq:F_N-tilde}
\tbF_\cT(\vphi) := \begin{cases}
\cF_\cT \big( P_\cT \vphi \big) &\text{if }\vphi \in \PCcT, \\
+\infty &\text{otherwise},
\end{cases}
\end{equation}
where $\PCcT$ denotes the space of all functions in $L^2(\Omega)$ that are constant a.e.~on each cell $K \in \cT$, and 
\begin{align}
	\label{eq:PT}
	\big(P_\cT \vphi\big)(K) := \vphi(x_K) 
	\quad \text{for }
	\vphi : \Omega \to \R.
\end{align}

We then obtain the following convergence result. For the definition of Mosco convergence we refer to Definition \ref{def:Gamma} below.

\begin{theorem}[Mosco convergence] \label{thm:Mosco}
	Let $(\cT_N)_N$ be a vanishing sequence of $\zeta$-regular meshes, 
	and suppose that $\mu$ and $(m_N)_N$ satisfy \eqref{eq:lb}, \eqref{eq:ub}, and \eqref{eq:pc}. 
	Then we have Mosco convergence 
	$\tbF_{\cT_N}  \xrightarrow[]{M} \bF_\mu$ with respect to the $L^2(\Omega)$-topology.
\end{theorem}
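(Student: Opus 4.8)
The plan is to establish the two halves of Mosco convergence separately: the $\Gamma$-$\liminf$ inequality with respect to weak $L^2(\Omega)$-convergence, and the existence of a recovery sequence with respect to strong $L^2(\Omega)$-convergence. For the $\liminf$ inequality, suppose $\vphi_N \in \PCcT[\cT_N]$ with $\vphi_N \weakto \vphi$ weakly in $L^2(\Omega)$ and $\sup_N \tbF_{\cT_N}(\vphi_N) < \infty$; we must show $\bF_\mu(\vphi) \le \liminf_N \tbF_{\cT_N}(\vphi_N)$. The natural route is to encode the discrete Dirichlet energy as an integral functional over a space of vector-valued measures: to each $f \in \R^{\cT_N}$ associate a measure $\lambda_N$ supported on the edges $\{(K,L) : K \sim L\}$ (suitably localised in $\bOmega$) whose mass on edge $KL$ records the increment $f(K)-f(L)$ weighted by the geometric factor $\sH^{d-1}(\Gamma_{KL})/|x_K-x_L|$, together with a direction $\frac{x_K - x_L}{|x_K-x_L|}$. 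One shows that, up to the energy bound, $\lambda_N$ converges weakly-$*$ to the measure $\nabla \vphi \, \dd x$ and that $\vphi \in H^1(\Omega)$; then lower semicontinuity follows from the representation/lower-semicontinuity theorem \cite[Theorem 2]{Bouchitte2002}, applied to the integrand built from the density bounds $U_{KL}$. Here the bounds \eqref{eq:lb}, \eqref{eq:ub}, \eqref{eq:pc} enter: \eqref{eq:pc} guarantees that the weights $U_{KL}$, which are squeezed between $\frac{m(K)}{|K|}$ and $\frac{m(L)}{|L|}$, converge (along subsequences, a.e.) to the continuous density $\upsilon$, so the limiting integrand is exactly $\frac12 |\xi|^2 \upsilon(x)$; \eqref{eq:lb} and \eqref{eq:ub} provide the uniform ellipticity needed to control the construction and to pass to the limit.

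For the recovery sequence, given $\vphi \in H^1(\Omega)$ with $\bF_\mu(\vphi) < \infty$, I would first reduce by density to the case $\vphi \in C^\infty(\bOmega)$ (or $C^\infty_c$ extended), using that $\bF_\mu$ is continuous along the strong $H^1$ approximation because of the upper bound $\upsilon \le \overline c$, and that Mosco limsup inequalities are stable under such diagonal arguments. For smooth $\vphi$ the obvious candidate is $\vphi_N := Q_{\cT_N}(P_{\cT_N}\vphi)$, i.e.\ the piecewise-constant interpolation through the cell centres $x_K$. One then checks $\vphi_N \to \vphi$ strongly in $L^2$ (immediate from uniform continuity of $\vphi$ and $[\cT_N] \to 0$), and estimates $\cF_{\cT_N}(P_{\cT_N}\vphi)$: the increment $\vphi(x_K)-\vphi(x_L) \approx \nabla\vphi(x_K)\cdot(x_K - x_L)$ up to $O([\cT_N]^2)$, and by admissibility $x_K - x_L$ is orthogonal to $\Gamma_{KL}$, so the weighted sum over edges is a Riemann-sum approximation of $\frac12\int_\Omega |\nabla\vphi|^2 U \dd x$ with $U \to \upsilon$; one concludes $\limsup_N \tbF_{\cT_N}(\vphi_N) \le \bF_\mu(\vphi)$. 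Again \eqref{eq:pc} is what makes the weights converge to $\upsilon$, and $\zeta$-regularity controls the geometric factors $\sH^{d-1}(\Gamma_{KL})/|x_K-x_L| \sim [\cT_N]^{d-2}$ and the number of neighbours, turning the edge sum into a genuine integral in the limit.

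The main obstacle is the $\liminf$ inequality, specifically proving that a sequence of piecewise-constant functions with bounded discrete Dirichlet energy has a limit in $H^1(\Omega)$ and that the associated "discrete gradient measures" converge to the true gradient. On an anisotropic mesh the discrete energy only controls differences across faces in the single direction $x_K - x_L$, which varies from edge to edge; the heart of the argument is to show that these directional controls nonetheless assemble into full $H^1$ regularity and into weak-$*$ convergence $\lambda_N \weakstar \nabla\vphi\,\dd x$ with no loss of mass or direction in the limit. This is precisely where the compactness and integral-representation machinery of \cite{Bouchitte2002}, in the spirit of \cite{Alicandro2004}, does the work, and where the hypotheses \eqref{eq:lb}--\eqref{eq:pc} are used most essentially. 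The remaining steps — density reduction, Riemann-sum estimates, the elementary interpolation bounds — are routine by comparison.
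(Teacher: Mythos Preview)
Your proposal sketches a direct ``two halves'' attack, but both halves have genuine gaps, and the use of \cite[Theorem~2]{Bouchitte2002} is not how that result operates.

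\textbf{Liminf.} The theorem you invoke from \cite{Bouchitte2002} is an integral \emph{representation} theorem: it takes as input a \emph{single} functional $\bE:H^1(\Omega)\times\cO(\Omega)\to[0,\infty]$ already known to satisfy locality, the measure property, two-sided Sobolev bounds, and weak lower semicontinuity, and outputs a density $f(x,u,\xi)$. It does \emph{not} provide a lower-semicontinuity principle for a sequence of discrete edge-measures converging to $\nabla\vphi\,\ddd x$. So the sentence ``lower semicontinuity follows from the representation/lower-semicontinuity theorem \cite[Theorem~2]{Bouchitte2002}'' does not type-check. You would need an independent argument that the edge-measures $\lambda_N$ converge weak-$*$ to $\nabla\vphi\,\ddd x$ \emph{and} that the quadratic form in the limit is $\upsilon(x)|\xi|^2$; on an anisotropic admissible mesh neither statement is obvious, and in fact characterising the limiting quadratic form is precisely the hard step.

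\textbf{Limsup.} For smooth $\vphi$ and $f_N=P_{\cT_N}\vphi$, the discrete energy is
\[
\cF_{\cT_N}(f_N)\approx\tfrac14\sum_{K\sim L} d_{KL}\,|\Gamma_{KL}|\,U_{KL}\,\big(\nabla\vphi(x_K)\cdot\tau_{KL}\big)^2,
\qquad \tau_{KL}=\frac{x_K-x_L}{d_{KL}}.
\]
This is \emph{not} a Riemann sum for $\tfrac12\int_\Omega|\nabla\vphi|^2\upsilon\,\ddd x$ unless one knows the geometric identity
$\tfrac12\sum_{L\sim K} d_{KL}|\Gamma_{KL}|\,\tau_{KL}\otimes\tau_{KL}=|K|\,I_d$
(``average isotropy'', \cite[Lemma~5.4]{gladbach2018}), which you do not invoke. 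Without it, the edge sum only yields an anisotropic quadratic form in $\nabla\vphi$; with it, one still needs care because the weights $U_{KL}$ vary across edges. In the paper this direct computation only gives the coarse bound $\limsup_N\tbF_N(Q_Nf_N)\lesssim\bar k\int|\nabla\vphi|^2$ (Proposition~\ref{prop:sobolev_bound}); the sharp constant is obtained indirectly.

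\textbf{What the paper does instead.} The paper localises, $\tbF_N(\cdot,A)$, uses a $\Gamma$-compactness theorem to extract a subsequential $\Gamma$-limit $\bF(\cdot,A)$ for every open $A$, and then verifies that $\bF$ satisfies the hypotheses of \cite[Theorem~2]{Bouchitte2002} (locality, subadditivity/measure property, two-sided Sobolev bounds, lsc). This yields $\bF(\vphi,A)=\int_A F(x,\vphi,\nabla\vphi)\,\ddd x$ with $F$ given by a cell formula. The density $F$ is then computed by solving the discrete cell problem: the key observation is that the discretised \emph{affine} function $K\mapsto\xi\cdot x_K$ is discrete-harmonic on any admissible mesh (a Stokes-theorem computation), so it is the minimiser with its own boundary data; combining this with the average-isotropy identity and \eqref{eq:pc} gives $F(x,u,\xi)=\upsilon(x)|\xi|^2$. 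Since every subsequential $\Gamma$-limit equals $\bF_\mu$, the full sequence $\Gamma$-converges, and the $L^2$-compactness (Proposition~\ref{prop:sobolev_limit}) upgrades this to Mosco convergence. Your plan bypasses this localisation--representation--cell-problem route, but then has no mechanism to identify the limiting integrand.
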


The proof of this result follows the strategy developed in \cite{Alicandro2004}, where similar $\Gamma$-convergence results have been obtained for more general energy functionals on a particular mesh (the cartesian grid).
In that paper, the authors do not explicitly characterise the limiting functional, 
except in special situations, such as the periodic setting. 
For our application to evolutionary $\Gamma$-convergence, a characterisation of the limiting functional is crucial.

\begin{remark}\label{rem:semigroup}
	Mosco convergence of Dirichlet energy functionals is equivalent to strong convergence of the associated semigroups \cite{Mosco:1994}; see also \cite{kuwae2003} for a generalisation to Dirichlet forms defined on different spaces.
	\end{remark}
	
\subsection{Related work}	\label{section:previous_works}

We close this section with some comments on related work.

\subsubsection*{Convergence of the discrete Fokker-Planck equations}
It is well known that the discrete heat flow converges to the continuous heat flow for any sequence of admissible meshes with vanishing diameter. 
The authors in \cite{eymard2000finite}, \cite{barth2018finite} exploit classical Sobolev a priori estimates and pass to the limit in the weak formulation of the equation, in dimension 2 and 3 (see \cite[Lemma 8]{barth2018finite}). 
A unified framework for discretisation of partial differential equations in higher dimension can be found in \cite{droniou2018gradient}.
Convergence results for finite-volume discretisations of Fokker-Planck equations based on different Stolarsky means have recently been obtained in \cite{Heida-Kantner-Stephan:2020}.

\subsubsection*{Entropy gradient flows in discrete settings}	\label{section:discrete_OT}

Entropy gradient flow structures for discrete dynamics have been intensively studied in discrete settings following the papers \cite{chow12,maas2011gradient,mielke2011gradient}.
Many subsequent works deal with connections to curvature bounds and functional inequalities \cite{erbar12,mielke2013geodesic,erbar14,erbar15,fathi2016entropic,erbar19}. 
Entropy gradient flow structures have also been exploited to analyse the discrete-to-continuum limit from several perspectives, see, e.g., \cite{Cances-Guichard:2017,Cances-Gallouet-Laborde:2019,Chalub-Monsaingeon-Ribeiro:2019,Bruna-Burger-Carrillo:2020}.

\subsubsection*{Evolutionary $\Gamma$-convergence for Fokker-Planck in 1D}

Evolutionary $\Gamma$-convergence of the discrete gradient flow structures for Fokker-Planck equations has been proved in the one-dimensional setting under additional geometric conditions using methods that do not extend straightforwardly to higher dimensions \cite{disser2015}.

In particular, the authors work with meshes that satisfy the \textit{center of mass condition}
\begin{align} 	\label{eq:centerofmass}
	\avint_{\Gamma_{KL}} x \dd \Haus^{d-1} = \frac{x_K+x_L}{2}, \quad \text{for all } K \sim L \in \cT.
\end{align}
This condition implies the Gromov-Hausdorff convergence of the associated transport metrics \cite{gladbach2018}.  
Here, we work with more general meshes for which Gromov-Hausdorff convergence of the associated transport metrics does not always hold.

Moreover, in one dimension, it is possible to construct explicit solutions to the continuity equation from discrete vector fields using linear interpolation techniques.
As such methods are not available in higher dimensions, we take a more variational approach in this paper.

\subsubsection*{Scaling limits for discrete optimal transport in any dimension.}

The crucial liminf inequality \eqref{eq:lower bound dual action_0} can be proved under weaker assumptions on the approximating sequence of measures if the meshes satisfy a suitable isotropy condition, which we will now recall.

\begin{definition}[Asymptotic isotropy]
	\label{def:isotropy_local} 
	A vanishing sequence of meshes $(\cT_N)_N$ is said to satisfy the \emph{asymptotic isotropy condition} if, for every $N \in \N$,
	\begin{equation}	\label{eq:isotropy_condition}
		\frac{1}{2} \sum_{L \in \cT_N}  w_{KL} \left( x_K - x_L \right) \otimes \left( x_K - x_L \right) \leq \pi_N(K) \big( I_d + \eta_{\cT_N}(K) \big) 		\qquad \forall K \in \cT_N,
	\end{equation}
	where $\sup\limits_{K \in \cT_N} \| \eta_{\cT}(K) \| \rightarrow 0$ as $N \to \infty$.
\end{definition}

Under this condition, the following following version of \eqref{eq:lower bound dual action_0} has been proved in \cite[Proposition 6.6]{gladbach2018}. In that paper the reference measure is the Lebesgue measure.
Here we formulate a slight generalisation with the reference measure $\bfm$.

\begin{proposition}[Action bounds]	\label{prop:action bounds}
	Let $(\cT_N)_N$ be a vanishing sequence of meshes satisfying the asymptotic isotropy condition \eqref{eq:isotropy_condition}. 
	Let $\mu \in \cP(\bOmega)$ 
	and $\eta \in \cM_0(\bOmega)$,  
	and suppose that 
	$m_N \in \cP(\cT_N)$ and 
	$e_N \in \cM_0(\cT_N)$ satisfy
	$\mathsf{Q}_N m_N \weakto \mu$ and 
	$\mathsf{Q}_N e_N \weakto \eta$
	as $N \to \infty$. 
	Then we have the lower bound
	\begin{equation}	
	\label{eq:lower bound dual action}
		\liminf_{N \to \infty} 
					\cA^*_N(m_N, e_N) \geq \bA^*(\mu,\eta).
	\end{equation}
	\end{proposition}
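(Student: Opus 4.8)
The plan is to run the duality argument of \cite[Proposition~6.6]{gladbach2018}, now with $\pi_N = \mathsf{P}_N\bfm$ in place of the Lebesgue measure. Since $\bA^*(\mu,\eta) = \sup_{\vphi\in C_c^\infty(\R^d)}\big\{\langle\vphi,\eta\rangle - \bA(\mu,\vphi)\big\}$, it is enough to fix $\vphi\in C_c^\infty(\R^d)$ and prove
\[
	\liminf_{N\to\infty}\cA_N^*(m_N,e_N)\ \geq\ \langle\vphi,\eta\rangle - \bA(\mu,\vphi),
\]
and then to take the supremum over $\vphi$. I would obtain this by testing the Legendre transform $\cA_N^*(m_N,\cdot)$ against the discrete function $f_N := P_{\cT_N}\vphi\in\R^{\cT_N}$ (so $f_N(K) = \vphi(x_K)$), which gives the elementary inequality
\[
	\cA_N^*(m_N,e_N)\ \geq\ \sum_{K\in\cT_N}e_N(K)\,\vphi(x_K)\ -\ \cA_N(m_N,f_N),
\]
valid without any assumption. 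The claim then reduces to the two limits $\sum_{K}e_N(K)\vphi(x_K)\to\langle\vphi,\eta\rangle$ and $\limsup_N\cA_N(m_N,f_N)\leq\bA(\mu,\vphi)$.

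The linear term is routine: from $\mathsf{Q}_N e_N\weakto\eta$ and the uniform boundedness principle one gets $\sup_N\|e_N\|_{\mathrm{TV}} = \sup_N\|\mathsf{Q}_N e_N\|_{\mathrm{TV}}<\infty$, so, $\vphi$ being Lipschitz with $\diam(K)\leq[\cT_N]\to0$, the difference between $\sum_K e_N(K)\vphi(x_K)$ and $\int_{\bOmega}\vphi\dd(\mathsf{Q}_N e_N) = \sum_K e_N(K)\avint_K\vphi\dd x$ is at most $\Lip(\vphi)[\cT_N]\|e_N\|_{\mathrm{TV}}\to0$, while $\int_{\bOmega}\vphi\dd(\mathsf{Q}_N e_N)\to\langle\vphi,\eta\rangle$.

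The core of the argument, and where the isotropy condition \eqref{eq:isotropy_condition} is used, is the bound $\limsup_N\cA_N(m_N,f_N)\leq\bA(\mu,\vphi)=\tfrac12\int_{\bOmega}|\nabla\vphi|^2\dd\mu$. Write $r_N(K):=m_N(K)/\pi_N(K)$, abbreviate $(x_K-x_L)^{\otimes2}:=(x_K-x_L)\otimes(x_K-x_L)$, and let $M:=\nabla\vphi\otimes\nabla\vphi$, a Lipschitz, compactly supported, symmetric positive semidefinite matrix field. Using $\vphi(x_K)-\vphi(x_L)=\int_0^1\nabla\vphi\big(x_L+t(x_K-x_L)\big)\cdot(x_K-x_L)\dd t$ and Jensen's inequality, $(\vphi(x_K)-\vphi(x_L))^2\leq\int_0^1(x_K-x_L)^\top M\big(x_L+t(x_K-x_L)\big)(x_K-x_L)\dd t$; since $|x_K-x_L|\leq2[\cT_N]$ for $K\sim L$ and $M$ is Lipschitz, one replaces $M(x_L+t(x_K-x_L))$ by $M(x_K)$ with an error of order $[\cT_N]\sum_{K\sim L}\theta_{\log}(r_N(K),r_N(L))\,w_{KL}\,|x_K-x_L|^2$, and the latter sum is $O(1)$ by a short computation using $\theta_{\log}(a,b)\leq\tfrac12(a+b)$, the isotropy bound $\sum_{L}w_{KL}(x_K-x_L)^{\otimes2}\leq 2\pi_N(K)(I_d+\eta_{\cT_N}(K))$, and $\sum_K m_N(K)=1$. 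One is thus reduced to estimating $\tfrac14\sum_K\nabla\vphi(x_K)^\top A_N(K)\,\nabla\vphi(x_K)$, where $A_N(K):=\sum_{L\sim K}\theta_{\log}(r_N(K),r_N(L))\,w_{KL}\,(x_K-x_L)^{\otimes2}$. The obstacle here is that, lacking an upper bound on $r_N$, the logarithmic-mean weights in $A_N(K)$ cannot be compared cell-by-cell with the densities of $\mathsf{Q}_N m_N$; the remedy is $\theta_{\log}(r_N(K),r_N(L))\leq\tfrac12(r_N(K)+r_N(L))$, which splits $A_N(K)$ into a diagonal part $\tfrac12 r_N(K)\sum_{L\sim K}w_{KL}(x_K-x_L)^{\otimes2}\leq m_N(K)(I_d+\eta_{\cT_N}(K))$, controlled directly by isotropy, and a cross part $\tfrac12\sum_{L\sim K}r_N(L)\,w_{KL}\,(x_K-x_L)^{\otimes2}$, controlled by exchanging the order of summation over $K$ and $L$ (so that isotropy acts at the cell $L$) together with a second Lipschitz replacement of $M(x_K)$ by $M(x_L)$. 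Using $\sup_K\|\eta_{\cT_N}(K)\|\to0$, each of the two parts is bounded, up to $o(1)$, by $\tfrac14\sum_K m_N(K)|\nabla\vphi(x_K)|^2$; since $|\nabla\vphi|^2\in C_b(\bOmega)$, $\mathsf{Q}_N m_N\weakto\mu$, and replacing $|\nabla\vphi(x_K)|^2$ by $\avint_K|\nabla\vphi|^2\dd x$ costs $O([\cT_N])$, this converges to $\tfrac14\int_{\bOmega}|\nabla\vphi|^2\dd\mu$, and summing the two parts yields $\limsup_N\cA_N(m_N,f_N)\leq\tfrac12\int_{\bOmega}|\nabla\vphi|^2\dd\mu$, as required.

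The only genuine difficulty is the quadratic estimate just described — controlling the logarithmic-mean weights without density bounds — and it is handled by the arithmetic-mean domination of $\theta_{\log}$ together with the symmetrisation that carries the cross term back onto the isotropy condition. As an alternative one could reduce directly to the Lebesgue-reference case of \cite[Proposition~6.6]{gladbach2018}: since $\sigma=Z_V^{-1}e^{-V}$ is continuous and bounded away from $0$ and $\infty$ on the compact set $\bOmega$, the homogeneity $\theta_{\log}(\lambda a,\lambda b)=\lambda\theta_{\log}(a,b)$ shows that $\cA_N(m_N,\cdot)$ and its Lebesgue-referenced analogue — hence also their Legendre transforms — differ multiplicatively by a factor $1+o(1)$, and the same comparison converts the $\bfm$-isotropy condition into the Lebesgue one.
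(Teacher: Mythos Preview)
The paper does not give its own proof of this proposition: it is quoted from \cite[Proposition~6.6]{gladbach2018} and merely stated here in the slightly more general form with reference measure $\bfm$ instead of Lebesgue. Your argument is correct and is essentially the proof from that reference, adapted to the weighted setting; the alternative reduction you sketch at the end (comparing the $\bfm$- and Lebesgue-referenced actions via continuity of $\sigma$ and $1$-homogeneity of $\theta_{\log}$) is also valid and is in fact what the paper has in mind when it calls this a ``slight generalisation''.

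One minor simplification: the split into ``diagonal'' and ``cross'' parts followed by a second Lipschitz replacement is more elaborate than needed. After bounding $\theta_{\log}(r_N(K),r_N(L))\leq\tfrac12(r_N(K)+r_N(L))$, the summand in the double sum over ordered pairs $(K,L)$ is symmetric under $K\leftrightarrow L$, so the whole thing equals $\tfrac14\sum_{K}r_N(K)\sum_{L}w_{KL}(f_N(K)-f_N(L))^2$ directly, and isotropy applies at once. This avoids the exchange-and-replace step for the cross term and yields the same bound with one fewer approximation.
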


It has also been shown in \cite{gladbach2018} that Gromov--Hausdorff convergence of the associated transport distances holds under the asymptotic isotropy condition; 
see also \cite{gladbach2019} for a study of the limiting metric in the one-dimensional periodic setting.
In the current paper we do not assume that the discrete meshes satisfy an isotropy condition.

\subsection*{Notation}
Throughout the paper we use the notation $a \lesssim b$ (or $b \gtrsim a$) if $a \leq C b$ with $C < \infty$ depending only on $\Omega$, $\zeta$, and $\bfm$. We write $a \eqsim b$ if $a \lesssim b$ and $a \gtrsim b$.

\section{Proof of the main result: the Wasserstein evolutionary \texorpdfstring{$\Gamma$}{Gamma}-convergence}
\label{section:evolut_gamma_Wass}

In this section we prove our main result, the evolutionary $\Gamma$-convergence of the discrete gradient flow structures (Theorem \ref{thm:MAIN}). The section is divided into three parts: 
the first subsection concerns the proof of Theorem \ref{thm:lower_bounds}, which relies on Theorem \ref{thm:Mosco}. 
The second subsection contains a proof of compactness for the continuously embedded discrete solutions.
In the third and final part we complete the proof of Theorem \ref{thm:MAIN}.

\subsection{Asymptotic lower bounds for the functionals}

Let $\mu$ and $m_N$ be as in the statement of Theorem \ref{thm:lower_bounds}. 
Write $\mu_N := \mathsf{Q}_N m_N$ and let $\rho_N$ be the density of $\mu_N$ with respect to $\bfm$. 

\begin{proof}[Proof of Theorem \ref{thm:lower_bounds}]

The proof consists of three parts.

\smallskip

\emph{ ({i}) Lower bound for the entropy.}
Note that  $\cH_N(m_N) = \Ent(\mu_N | \mathsf{Q}_N \pi_N)$ and $\bH(\mu) = \Ent( \mu | \bfm )$, where $\Ent(\cdot|\cdot)$ denotes the relative entropy.
Since $\mu_N \weakto \mu$ and $\mathsf{Q}_N \pi_N \weakto \bfm$, the result follows immediately from the joint weak lower semicontinuity of $\Ent(\cdot|\cdot)$, see, e.g., \cite[Lemma 9.4.3]{AmbrosioGigliSavare08}.

\smallskip

\emph{ ({ii}) Lower bound for the Fisher information.}
Assume that \eqref{eq:nc} holds. 
We first prove the lower bound \eqref{eq:lower bound Fisher info_0} under the additional assumption \eqref{eq:lb}. This assumption will be removed at the end of the proof. 
The key identity for the Fisher information is
\begin{align} \label{eq:tildeAformrepres}
	 \tilde \cA_N \big( m_N, - D \cH_N(m_N) \big) 
	 	= 4 \cE_N \big( \sqrt{r_N} \big) ,
\end{align}
where $\cE_N(f):=\cA_N(\pi_N,f)$ is the discrete Dirichlet energy with reference measure $\pi_N$, and $\tilde \cA_N$ is defined by replacing the logarithmic mean $\theta_{\log}$ in the definition of $\cA_N$ by $\tilde \theta(a,b) := \theta_{\log}(\sqrt a, \sqrt b)^2$.
Since $\min\{a, b\} \leq \tilde \theta(a,b), \theta_{\log}(a,b) \leq \max\{a, b\}$, we have
\begin{align*}
	|  \theta_{\log}(a,b) - \tilde \theta(a,b) |
		\leq | a-b |
		\leq \frac{| a-b | }{\min\{a, b\}} \tilde \theta(a,b).
\end{align*}
The assumptions \eqref{eq:nc} and \eqref{eq:lb} yield
\begin{align}	\label{eq:cont_proof}
	\eps_N := 
			\sup_{
				\substack{ 
					K,L \in \cT_N \\ 
					K \sim L
				}
			}	
			\abs{r_N(K) - r_N(L)} \to 0
		\quad \text{and} \quad
				\inf_{K \in \cT_N } r_N(K) \geq \underline k,
\end{align}
Using these estimates and the identity $(\log a - \log b)^2 \tilde \theta(a,b) = 4 \big(\sqrt{a} - \sqrt{b}\big)^2$ we obtain
\begin{equation}
\begin{aligned}
	\label{eq:Fisher-Dirichlet}
	 \big| \tfrac12\cI_N(m_N) - 4\cE_N(\sqrt{r_N} ) \big|
		& = \big|  
				\big( \cA_N - \tilde \cA_N \big) 
				\big( m_N, - D \cH_N(m_N) \big) 
			\big|
	\\ & = \frac14 \sum_{K, L \in \cT_N} w_{KL}		
	\big(\log r_N(K) - \log r_N(L)\big)^2 
	\\& \qquad \times \Big(  
			\theta_{\log} \big( r_N(K), r_N(L) \big) - 
			\tilde \theta_{\log} \big( r_N(K), r_N(L) \big) 
		\Big)
	\\ & 
	\leq   \frac{4 \eps_N}{\underline k} \cE_N\big(\sqrt{r_N}\big).
\end{aligned}
\end{equation}

Let us now assume that $\sup_{N} \cI_N(m_N) < \infty$ along a subsequence; if this were not the case, the result holds trivially. 
The previous bound implies that also $\sup_{N} \cE_N\big(\sqrt{r_N}\big) < \infty$, hence $\big(\sqrt{\rho_N}\big)_N$ has a subsequence that converges strongly in $L^2(\Omega, \bfm)$ by Proposition \ref{prop:sobolev_limit} below. Let $g \in L^2(\Omega, \bfm)$ be its limit.
Since $\| \rho_N - g^2 \|_{L^1} \leq \| \sqrt{\rho_N} - g\|_{L^2} \|\sqrt{\rho_N} + g \|_{L^2}$, we infer that $\rho_N \to g^2$ in $L^1(\Omega, \bfm)$. 
As $\mu_N = \rho_N \bfm \weakto \mu$ in $\cP(\bOmega)$ by assumption, we infer that $\mu = \rho \bfm$ with $\rho := g^2$.
Now we apply \eqref{eq:Fisher-Dirichlet} and the Mosco convergence from Theorem \ref{thm:Mosco} to obtain
\begin{align*}
	\liminf_{N \to \infty}  
				\cI_N(m_N) 
		\geq  \liminf_{N \to \infty} 
				8 \cE_N \big( \sqrt{r_N} \big) 
		\geq    8 \bA \big( \bfm, \sqrt \rho\big) 
		   =  \bI(\mu),
\end{align*}
which concludes the proof. 

Let us now show how to remove the assumption \eqref{eq:lb}.
The argument is based on the convexity of $m \mapsto \cI_N(m)$, which is a consequence of the joint convexity of the map $(a,b) \mapsto (a - b) (\log a - \log b)$ on $(0,\infty) \times (0,\infty)$. 

Pick $\delta \in (0,1)$ and set $m_N^\delta := (1-\delta) m_N + \delta \pi_N$.
Note that $m_N^\delta$ satisfies \eqref{eq:lb} with $\underline k = \delta$. Moreover, $\mathsf{Q}_N m_N^\delta \weakto \mu^\delta := (1-\delta) \mu + \delta \bfm$. 
Applying the first part of the result we obtain
\begin{align*}
	\bI(\mu^\delta) 
		\leq   \liminf_{N \to \infty} \cI_N(m_N^\delta)	
		\leq	(1-\delta) \liminf_{N \to \infty} \cI_N(m_N)
\end{align*}
for every $\delta \in (0,1]$, where the last inequality uses the convexity of $\cI_N$ and the fact that $\cI_N(\pi_N) = 0$.
Since $\mu^\delta \weakto \mu$, the result follows from the lower semicontinuity of $\bI$ with respect to the weak convergence in $\cP(\bOmega)$; see \cite[Lemma 2.2]{Gianazza-Savare-Toscani:2009}.

\smallskip
\emph{ ({iii}) Lower bound for $\cA_N^*$.}
Assume first that \eqref{eq:lb}, \eqref{eq:ub}, and \eqref{eq:pc} hold. 
Fix $\eta \in L^2(\Omega, \bfm)$ and let $e_N \in  L^2(\cT_N, \pi_N)$ be such that $\Q_N e_N \weakto \eta$ in $L^2(\Omega, \bfm)$.
Theorem \ref{thm:Mosco} (in particular, the existence of a recovery sequence) implies that for every $\vphi \in C_c(\Omega)$  there exist $f_N \in L^2(\cT_N, \pi_N)$ such that $\Q_N f_N \rightarrow \vphi$ in $L^2(\Omega, \bfm)$ and
\begin{align*}
	\limsup_{N \to \infty} 
		 \cA_N(m_N, f_N) 
	\leq \bA(\mu, \vphi).
\end{align*}
Since $\Q_N e_N \weakto \eta$ in $L^2(\Omega, \bfm)$, it follows that $\ip{e_N, f_N}_{L^2(\cT_N, \pi_N)} \to \ip{\eta, \vphi}_{L^2(\Omega, \bfm)}$ and
\begin{align*}
	\ip{\eta, \vphi}_{L^2(\Omega, \bfm)} - \bA(\mu, \vphi)
	& \leq \liminf_{N \to \infty}
		\ip{e_N, f_N}_{L^2(\cT_N, \pi_N)} - \cA_N(m_N, f_N)
\\	& \leq \liminf_{N \to \infty}
		\cA_N^*(m_N, e_N).
\end{align*}
Taking the supremum over $\vphi$, we infer that $\bA^*(\mu, \eta) \leq \liminf_{N \to \infty} \cA_N^*(m_N, e_N)$, as desired.

Assume now that \eqref{eq:ub}, \eqref{eq:pc} hold, and that $E := \limsup_{N \to \infty} \cA_N^*(\pi_N, e_N) < + \infty$, instead of \eqref{eq:lb}.
The key observation is that the map $m_N \mapsto \cA_N^*(m_N, e_N)$ is convex: indeed, the concavity of $\theta_{\log}$ implies the concavity of $m_N \mapsto \cA(m_N, f_N)$, and thus the convexity of its Legendre dual as a supremum of convex maps. 
To take advantage of this fact, we fix $\delta \in (0,1)$ and define $m_N^\delta := (1-\delta) m_N + \delta \pi_N$.
Note that $\mathsf{Q}_N m_N^\delta \weakto  \mu^\delta := (1-\delta) \mu + \delta \bfm$ and $m_N^\delta$ satisfies \eqref{eq:lb} with $\underline{k} = \delta$. 
We may thus apply the first part of the result and the convexity to obtain
\begin{align*}
	\bA^*(\mu^\delta, \eta) 
	& \leq \liminf_{N \to \infty} 
		\cA_N^*(m_N^{\delta}, e_N)
	\\ & 	
	\leq \liminf_{N \to \infty} 
	(1 - \delta) \cA_N^*(m_N, e_N)  
	+ \delta \cA_N^*(\pi_N, e_N)		
\\	& \leq		(1 - \delta)\Big( \liminf_{N \to \infty}  \cA_N^*(m_N, e_N)  \Big)
	+ \delta E.
\end{align*} 
Using the weak lower semicontinuity of $\bA^*(\cdot, \eta)$, we obtain the desired inequality \eqref{eq:lower bound dual action_0} by passing to the limit $\delta \rightarrow 0$.
\end{proof}

\subsection{Compactness and space-time regularity}
In this section we prove the compactness of the family of embedded discrete gradient flow curves $(t \mapsto \mu_t^N)_N$ in the space
$
C\big( [0, T]; (\cP(\bOmega), \bW)\big).
$
We follow the strategy of \cite[Theorem~3.1]{liero2015microscopic}, which is based on a metric Ascoli-Arzel\'a theorem. 
The corresponding one-dimensional result has been obtained in \cite{disser2015} using explicit interpolation formulas that are not available in the multi-dimensional setting.

Our proof is based on the following coarse energy bound from \cite[Lemma 3.4]{gladbach2018}. 
Here and below, $(H_t)_{t \geq 0}$ denotes the Neumann heat semigroup on $\Omega$. 
Moreover, $ \cM_0(\cT)$ denotes the space of signed measure on $\cT$ with zero total mass.

\begin{lemma}[Coarse energy bound]		
\label{lemma:coarse bound dual action}
Fix $\zeta \in (0, 1]$. 
There exists a constant $C < \infty$ such that for any  $\zeta$-regular mesh $\cT$, for any $m \in \cP(\cT)$ and any $\sigma \in \cM_0(\cT)$,  we have
\begin{equation}	
\label{eq:coarse bound dual action}
	\bA^*
		\big( H_{[\cT]} \mathsf{Q}_{\cT} m, 
			  H_{[\cT]} \mathsf{Q}_{\cT} \sigma
		\big) 
			\leq C \cA^*_{\cT}(m, \sigma) .
\end{equation}
\end{lemma}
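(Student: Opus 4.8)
\textbf{Proof plan for Lemma \ref{lemma:coarse bound dual action}.}

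The plan is to reduce the inequality \eqref{eq:coarse bound dual action} to a comparison between the quadratic form defining $\cA_{\cT}$ (the primal functional) and the form defining $\bA$, after applying the heat semigroup. By Legendre duality, the estimate $\bA^*(H_{[\cT]}\mathsf{Q}_\cT m, H_{[\cT]}\mathsf{Q}_\cT\sigma) \leq C\,\cA_\cT^*(m,\sigma)$ is equivalent to the primal inequality: for every test function $\varphi \in C_c^\infty(\R^d)$ there exists $f \in \R^\cT$ with
\begin{equation*}
	\langle H_{[\cT]}\mathsf{Q}_\cT\sigma, \varphi\rangle = \langle \sigma, f\rangle
	\quad\text{and}\quad
	\cA_\cT(m, f) \leq C\,\bA\big(H_{[\cT]}\mathsf{Q}_\cT m, \varphi\big).
\end{equation*}
The natural candidate is $f := P_\cT(H_{[\cT]}\varphi)$, i.e.\ the cell values of the heat-regularised test function sampled at the points $x_K$; the adjoint/self-adjointness of $H_t$ and the definitions of $\mathsf{P}_\cT,\mathsf{Q}_\cT$ should give the required pairing identity (up to the zero-mass condition $\sigma \in \cM_0(\cT)$, which makes constants irrelevant). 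So the heart of the matter is the energy bound $\cA_\cT(m, P_\cT(H_{[\cT]}\varphi)) \lesssim \bA(H_{[\cT]}\mathsf{Q}_\cT m, \varphi)$.

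First I would unfold $\cA_\cT(m,f)$ using \eqref{eq:defR_T}: it is $\tfrac14\sum_{K\sim L}(f(K)-f(L))^2\,\theta_{\log}(r_K,r_L)\,w_{KL}$ with $w_{KL} = \frac{\Haus^{d-1}(\Gamma_{KL})}{|x_K-x_L|}S_{KL}$. With $f(K) = (H_{[\cT]}\varphi)(x_K)$ and $\psi := H_{[\cT]}\varphi$, the difference $f(K)-f(L) = \psi(x_K)-\psi(x_L)$ can be written as a line integral $\int_0^1 \nabla\psi(x_L + s(x_K-x_L))\cdot(x_K-x_L)\,\ddd s$, so $(f(K)-f(L))^2 \leq |x_K-x_L|^2\int_0^1|\nabla\psi(\cdot)|^2\,\ddd s$. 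Hence each edge term is controlled by $\Haus^{d-1}(\Gamma_{KL})\,|x_K-x_L|\,S_{KL}\,\theta_{\log}(r_K,r_L)\int_0^1|\nabla\psi|^2$. The quantity $\Haus^{d-1}(\Gamma_{KL})\,|x_K-x_L|$ is comparable to the volume of the "diamond" $\bigcup_{s}(\text{segment})$ spanned by the cell centres and $\Gamma_{KL}$, which is how $\zeta$-regularity enters: each such diamond has bounded overlap and sits inside a ball of radius $\lesssim [\cT]$. Since the segments $x_L + s(x_K - x_L)$ range over a neighbourhood of scale $[\cT]$, and $\psi = H_{[\cT]}\varphi$ is smoothed at exactly that scale, the gradient $\nabla\psi$ and the measure $\mathsf{Q}_\cT m$ (with density $\asymp r_K\sigma$ on $K$, matched by $S_{KL}\theta_{\log}(r_K,r_L) \asymp$ the local density of $H_{[\cT]}\mathsf{Q}_\cT m$) do not vary much over balls of radius $[\cT]$; this is the standard "commutator between discretisation and heat regularisation" estimate — I expect to cite or reproduce the argument of \cite[Lemma 3.4]{gladbach2018} essentially verbatim here, since this Lemma is literally stated as being that result.

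Summing over edges and using $\zeta$-regularity (bounded overlap of the diamonds, $w_{KL}|x_K-x_L|^2 \asymp [\cT]^d \asymp |K|$) converts $\sum_{K\sim L}$ into $\lesssim \int_\Omega |\nabla\psi|^2\,\ddd(H_{[\cT]}\mathsf{Q}_\cT m)$, which is precisely $2\bA(H_{[\cT]}\mathsf{Q}_\cT m, \psi) = 2\bF_{H_{[\cT]}\mathsf{Q}_\cT m}(\psi)$; and since $\psi = H_{[\cT]}\varphi$, by the contraction property of the heat semigroup in the relevant weighted Dirichlet form one bounds this by $\bA(H_{[\cT]}\mathsf{Q}_\cT m, \varphi)$ up to a constant depending on $\Omega,\zeta,\bfm$ — though more carefully one should keep $\psi$ on the right and observe that $H_{[\cT]}$ applied to both $\mathsf{Q}_\cT m$ and $\varphi$ is exactly what appears in \eqref{eq:coarse bound dual action}, so no extra smoothing estimate is even needed. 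The main obstacle is the geometric bookkeeping: controlling the comparison between $S_{KL}\theta_{\log}(r_K,r_L)w_{KL}$-weighted discrete sums and the continuous integral against $H_{[\cT]}\mathsf{Q}_\cT m$ uniformly over all $\zeta$-regular meshes, i.e.\ showing the constant $C$ truly depends only on $\zeta$ (and $\Omega$, $\bfm$) and not on the mesh — this is where the inner-ball and area bounds of $\zeta$-regularity, together with the heat-kernel gradient bounds $\|\nabla H_t\|\lesssim t^{-1/2}$ at scale $t = [\cT]^2$, must be combined. Since the statement attributes the result to \cite{gladbach2018}, I would present this as a short proof invoking that lemma (with the cosmetic change from Lebesgue reference measure to $\bfm$, which only affects constants via the two-sided bounds on $\sigma$), rather than redoing the full computation.
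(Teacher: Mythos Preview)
The paper does not prove this lemma at all: it is stated as a direct citation of \cite[Lemma~3.4]{gladbach2018}, and your final recommendation---to invoke that result with the cosmetic passage from Lebesgue reference measure to $\bfm$---is exactly what the paper does. Your additional sketch of the underlying duality argument is reasonable and goes beyond what the paper provides; the only point to tighten is that the adjoint of $H_{[\cT]}\mathsf{Q}_\cT$ acting on test functions is $\varphi \mapsto \big(K \mapsto \fint_K H_{[\cT]}\varphi\big)$ (cell averages), not the point evaluation $P_\cT(H_{[\cT]}\varphi)$ you propose, though for functions smoothed at scale $[\cT]$ the two differ only by lower-order terms on a $\zeta$-regular mesh.
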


Let us stress that this result holds without any isotropy assumption on the mesh.

\begin{lemma}[$\bW$-Equi-continuity]	\label{lemma:equi-continuity in W2}
Let $\{ \cT_N\}_N$ be a vanishing sequence of $\zeta$-regular meshes. 
For each $N \in \N$, let $(m_t^N)_{t \in [0,T]}$ be a continuous curve in $\cP(\cT_N)$, and 
suppose that the following uniform energy bound holds:
\begin{equation} 	
	\label{eq:uniform discrete action bound hp}
	A := \sup_N 
			\int_0^T
				\cA^*_N\big( m_t^N, \dot{m}_t^N \big)
			\dd t
		 < +\infty.
\end{equation}
Then the curves $\tilde\mu^N : [0,T] \to \big(\cP(\bOmega), \bW\big)$
defined by $\tilde\mu_t^N :=  H_{[\cT_N]}  \mathsf{Q}_N m_t^N$ are equi-$\frac{1}{2}$-H\"older continuous, i.e., for $0 \leq s < t \leq T$ we have
\begin{equation}	\label{eq:equi-holder-W1}
	\bW\big( \tilde\mu_t^N, \tilde\mu_s^N\big) 
			\lesssim \sqrt{A(t-s)}.
\end{equation} 
\end{lemma}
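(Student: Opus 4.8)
The plan is to bound the $\bW$-length of the mollified curve on each interval $[s,t]$ by its $\bW$-metric derivative, and then estimate that metric derivative pointwise in time by means of the coarse energy bound of Lemma~\ref{lemma:coarse bound dual action}, exactly in the spirit of \cite[Theorem~3.1]{liero2015microscopic}.

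First I would fix $N$ and observe that, since $r \mapsto m_r^N$ is (locally) absolutely continuous in $\cP(\cT_N)$ with a.e.\ derivative $\dot m_r^N$, and since each $m_r^N$ has constant total mass $1$, we have $\dot m_r^N \in \cM_0(\cT_N)$ for a.e.\ $r$. Because $H_{[\cT_N]}$ and $\mathsf{Q}_N$ are bounded linear, mass- and positivity-preserving maps, the embedded curve $r \mapsto \tilde\mu_r^N = H_{[\cT_N]}\mathsf{Q}_N m_r^N$ is absolutely continuous with values in $\cP(\bOmega)$ and with derivative $\partial_r \tilde\mu_r^N = H_{[\cT_N]}\mathsf{Q}_N \dot m_r^N$. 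Applying Lemma~\ref{lemma:coarse bound dual action} with $m = m_r^N$ and $\sigma = \dot m_r^N$ then yields, for a.e.\ $r \in [0,T]$,
\[
	\bA^*\big(\tilde\mu_r^N,\, \partial_r \tilde\mu_r^N\big) \;\le\; C\,\cA_N^*\big(m_r^N,\, \dot m_r^N\big),
\]
where $C$ depends only on $\zeta$ (and $\Omega$, $\bfm$); in particular the right-hand side is finite for a.e.\ $r$ by hypothesis~\eqref{eq:uniform discrete action bound hp}.

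Next I would invoke the infinitesimal Benamou--Brenier formula $\tfrac12 |\dot\nu_r|_{\bW}^2 = \bA^*(\nu_r, \partial_r \nu_r)$ recalled in Section~\ref{sec:gradient_flows}, i.e.\ the fact that $\bA^*$ is precisely the kinetic-energy functional appearing in \eqref{eq:def W2 dynamical}, together with the metric-derivative characterisation of Wasserstein-absolutely continuous curves (see \cite[Theorem~8.3.1]{AmbrosioGigliSavare08}). This gives that $\tilde\mu^N$ is a $\bW$-absolutely continuous curve with $|\dot{\tilde\mu}_r^N|_{\bW}^2 = 2\,\bA^*(\tilde\mu_r^N, \partial_r \tilde\mu_r^N) \le 2C\,\cA_N^*(m_r^N, \dot m_r^N)$ for a.e.\ $r$. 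The claimed equi-Hölder bound then follows from the length formula and Cauchy--Schwarz: for $0 \le s < t \le T$,
\[
	\bW\big(\tilde\mu_t^N, \tilde\mu_s^N\big) \;\le\; \int_s^t |\dot{\tilde\mu}_r^N|_{\bW}\dd r \;\le\; \sqrt{2C(t-s)}\left(\int_0^T \cA_N^*\big(m_r^N, \dot m_r^N\big)\dd r\right)^{1/2} \;\le\; \sqrt{2C A\,(t-s)},
\]
which is exactly \eqref{eq:equi-holder-W1}, uniformly in $N$.

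The only genuinely delicate point is the middle step: deducing from "$\bA^*(\nu_r,\partial_r\nu_r)<\infty$ for a.e.\ $r$ with integrable bound" that the curve $\nu$ is $\bW$-absolutely continuous with $\tfrac12|\dot\nu_r|_{\bW}^2 = \bA^*(\nu_r,\partial_r\nu_r)$ — that is, that the Legendre-dual functional $\bA^*$ indeed coincides with the relaxed kinetic-energy functional of the Benamou--Brenier problem, so that a finite value of $\bA^*(\nu_r,\partial_r\nu_r)$ produces an admissible velocity field solving the continuity equation. This is standard (it is the content of the infinitesimal Benamou--Brenier identity already stated in the excerpt, combined with \cite[Ch.~8]{AmbrosioGigliSavare08}), but it is where all the functional-analytic work is hidden; everything else is the application of Lemma~\ref{lemma:coarse bound dual action} and a one-line Cauchy--Schwarz estimate. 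A secondary, minor point is the standing interpretation that $r \mapsto m_r^N$ is absolutely continuous so that $\dot m_r^N$, and hence $\cA_N^*(m_r^N,\dot m_r^N)$, is meaningful; in the intended application $m^N$ solves the smooth finite-dimensional ODE \eqref{eq:discrete-FP}, for which this is immediate.
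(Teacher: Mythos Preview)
Your proof is correct and follows essentially the same route as the paper: apply Lemma~\ref{lemma:coarse bound dual action} pointwise in time to control $\bA^*(\tilde\mu_r^N,\partial_r\tilde\mu_r^N)$ by $\cA_N^*(m_r^N,\dot m_r^N)$, then combine the Benamou--Brenier formula with Cauchy--Schwarz. The paper's version is slightly more compressed, using the Benamou--Brenier inequality directly in the form $\bW^2(\tilde\mu_t^N,\tilde\mu_s^N)\le (t-s)\int_s^t \bA^*(\tilde\mu_h^N,\partial_h\tilde\mu_h^N)\dd h$ rather than passing through the metric derivative and the length formula, but the content is identical.
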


\begin{proof}
For $0 \leq s \leq t \leq T$ we invoke the Benamou-Brenier formula \eqref{eq:def W2 dynamical} and Lemma \ref{lemma:coarse bound dual action} to obtain
\begin{align*}
\bW^2 \big( \tilde{\mu}_t^N, \tilde{\mu}_s^N \big) 
	& \leq (t-s) 
		 \int_s^t \bA^*
		 	\big(
				\tilde{\mu}_h^N, \partial_h \tilde{\mu}_h^N
			\big) 
		 \dd h
\\&	\lesssim (t-s) 
		\sup_N
			\int_0^T 
				 \cA^*_N \big( m_h^N, \partial_h{m}_h^N \big) 
			\dd h 
		\leq A (t-s),
\end{align*}
which concludes the proof. 
\end{proof}

A corollary of Lemma \ref{lemma:equi-continuity in W2} is the following compactness and regularity result.

\begin{proposition}[Compactness and regularity]\label{prop:compactness1}
For $t \in [0,T]$ and $N \geq 1$, 
let $\mu_t^N := \mathsf{Q}_N m_t^N \in \cP(\bOmega)$ be defined as in Theorem \ref{thm:MAIN}, 
and let $\rho_t^N$ be the density of $\mu_t^N$ 
with respect to $\bfm$.
There exists a $\bW$-continuous curve $t \mapsto \mu_t \in \cP(\bOmega)$ satisfying, up to a subsequence,
\[
	\sup_{t \in [0,T]}
		\bW \big(\mu_t^N, \mu_t \big) \to 0 
			\quad \text{as }N \rightarrow +\infty.
\]
\end{proposition}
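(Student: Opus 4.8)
The plan is to combine the coarse energy bound (Lemma \ref{lemma:coarse bound dual action}), the equi-continuity of the regularised curves (Lemma \ref{lemma:equi-continuity in W2}), and a metric Ascoli--Arzelà argument, following the strategy of \cite[Theorem~3.1]{liero2015microscopic}. First I would extract from the discrete EDI a uniform energy bound: since $(m_t^N)$ solves \eqref{eq:discrete-FP} with well-prepared initial data, the EDI together with $\cH_N \geq 0$ yields
\begin{align*}
	\sup_N \int_0^T \cA_N^*\big(m_t^N, \dot m_t^N\big) \dd t
		\leq \sup_N \cH_N(m_0^N) < +\infty,
\end{align*}
so hypothesis \eqref{eq:uniform discrete action bound hp} holds. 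By Lemma \ref{lemma:equi-continuity in W2} the regularised curves $\tilde\mu_t^N := H_{[\cT_N]}\mathsf{Q}_N m_t^N$ are equi-$\tfrac12$-Hölder continuous in $\bW$; since $(\cP(\bOmega),\bW)$ is a compact metric space, the metric Ascoli--Arzelà theorem (e.g.\ \cite[Proposition~3.3.1]{AmbrosioGigliSavare08}) gives a subsequence along which $\sup_{t\in[0,T]} \bW(\tilde\mu_t^N,\mu_t) \to 0$ for some $\bW$-continuous curve $t\mapsto\mu_t$, which is then automatically $\tfrac12$-Hölder.

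Next I would remove the heat-semigroup regularisation. The point is that $H_{[\cT_N]}$ is a small perturbation in $\bW$ as $[\cT_N]\to 0$: one has a bound of the form $\bW(H_s\nu, \nu) \lesssim \sqrt{s}$ uniformly over $\nu \in \cP(\bOmega)$ (using the Benamou--Brenier representation of the heat flow and the Neumann Poincaré/regularity estimates on the bounded convex domain $\Omega$, or alternatively comparing second moments along the heat flow). Applying this with $s = [\cT_N] \to 0$ gives $\sup_{t\in[0,T]} \bW(\tilde\mu_t^N, \mu_t^N) = \sup_{t\in[0,T]} \bW(H_{[\cT_N]}\mathsf{Q}_N m_t^N, \mathsf{Q}_N m_t^N) \lesssim \sqrt{[\cT_N]} \to 0$. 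Combining with the convergence of $\tilde\mu^N$ and the triangle inequality yields
\begin{align*}
	\sup_{t\in[0,T]} \bW\big(\mu_t^N, \mu_t\big)
		\leq \sup_{t\in[0,T]} \bW\big(\mu_t^N, \tilde\mu_t^N\big)
			+ \sup_{t\in[0,T]} \bW\big(\tilde\mu_t^N, \mu_t\big)
		\to 0,
\end{align*}
which is the assertion. Finally, $t\mapsto\mu_t$ inherits $\bW$-continuity from the uniform convergence of continuous curves.

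The main obstacle I anticipate is the quantitative estimate $\bW(H_s\nu,\nu)\lesssim\sqrt{s}$ uniformly in $\nu$ on the bounded convex domain with Neumann boundary conditions: one must be careful that the implied constant depends only on $\Omega$ (and not on $\nu$, which may be singular), and that the no-flux boundary does not spoil the short-time bound. A robust way around any subtlety is to run the Benamou--Brenier estimate for the heat flow $h\mapsto H_h\nu$ directly, using $\tfrac12|\dot{H_h\nu}|_{\bW}^2 = \bA^*(H_h\nu, \partial_h H_h\nu) \leq \tfrac12\int_\Omega |\nabla\log(\ddd H_h\nu/\ddd\Leb)|^2\dd H_h\nu$, and controlling the time-integral of the Fisher information of $H_h\nu$ on $[0,s]$ via the standard entropy--dissipation identity for the heat semigroup, which only costs a constant depending on $\Omega$ and the total mass. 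Everything else is a routine application of the cited lemmas and the metric Ascoli--Arzelà theorem.
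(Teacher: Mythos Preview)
Your proposal is correct and follows essentially the same route as the paper: derive the uniform action bound from the discrete EDI and well-preparedness, apply Lemma~\ref{lemma:equi-continuity in W2} and metric Arzel\`a--Ascoli to the heat-regularised curves $\tilde\mu_t^N$, and then remove the regularisation via the short-time bound $\bW(H_s\nu,\nu)\lesssim\sqrt{s}$. The paper dispatches this last bound by citing \cite[Lemma~2.2(iii)]{gladbach2018} rather than sketching its proof, but otherwise the arguments coincide.
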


\begin{proof}
We apply Lemma \ref{lemma:equi-continuity in W2} to the family of discrete gradient flow solutions $(t \mapsto m_t^N)_N$. 
In this case, the required estimate \eqref{eq:uniform discrete action bound hp} follows directly from the discrete EDI \eqref{eq:discreteEDI} 
and the well-preparedness of the initial conditions $(m_0^N)_N$.
Thus, Lemma \ref{lemma:equi-continuity in W2} implies the $\bW$-equi-continuity of the curves $(\tilde \mu^N)_N$ defined by $\tilde\mu_t^N :=  H_{\eps_N} \mathsf{Q}_N m_t^N$, where $\eps_N := [\cT_N]$.
The metric Arzel\'a-Ascoli Theorem \cite[Proposition~3.3.1]{AmbrosioGigliSavare08} yields the existence of a limiting curve $t \mapsto \mu_t$ satisfying $\sup_t \bW(\tilde\mu_t^N, \mu_t) \to 0$ as $N \to\infty$. 
Using the well-known heat flow bound
$
	\bW(\tilde\mu_t^N , \mu_t^N) \leq C \sqrt{\eps_N}
$
 (see, e.g., \cite[Lemma 2.2(iii)]{gladbach2018} for a proof), we obtain the desired result.
\end{proof}

\subsection{Proof of the Wasserstein evolutionary $\Gamma$-convergence}

We are finally ready to give the proof of the main result.
In order to apply the liminf inequalities from Theorem \ref{thm:lower_bounds} we use regularity properties of the discrete Fokker-Planck equation that can be derived from much more general results; see \cite{chen2019heat2} for Harnack inequalities and \cite[Theorem 1.20]{chen2016stability} for ultracontractivity. 

\begin{proposition}[Regularity of the discrete flows] 	\label{prop:hold_discrete}
Let $\cT$ be a $\zeta$-regular mesh, 
let $(m_t)_{t}$ $\subset \cP(\cT)$ be a solution to the discrete Fokker-Planck equation, and set $r_t := \frac{\ddd m_t}{\ddd \pi}$. 
\begin{enumerate}[(i)]
\item For any $t > 0$ there exist $C = C(\Omega, \bfm, \zeta, t) < \infty$ and $\lambda = \lambda(\Omega, \bfm, \zeta) > 0$ such that the following H\"older estimate holds:
\begin{align}
	\label{eq:Holder}
	\abs{r_t(K) - r_t(L)}
		 \leq C |x_K - x_L|^{\lambda} 
		 	 \sup_{K' \in \cT} | r_{t/2}(K')| 
		 \quad \forall K,L \in \cT.
\end{align}
\item For any $t > 0$ the ultracontractivity estimate 
\begin{align}	
	\label{eq:ultracontractivity_discrete}
	\| r_t \|_{L^\infty(\pi_\cT)} 
		\leq C 
			\big( 1 \vee t^{-\frac{d}{2}} \big)
				 \| r_0 \|_{L^1(\pi_\cT)}
\end{align}
holds with $C = C(\Omega, \bfm,  \zeta) < \infty$. 
\end{enumerate}
\end{proposition}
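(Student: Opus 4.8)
The plan is to recognise \eqref{eq:discrete-FP} as a heat equation on a weighted graph and then import the general parabolic regularity theory. Writing $r_t=\ddd m_t/\ddd\pi_\cT$, equation \eqref{eq:discrete-FP} reads $\partial_t r_t(K)=\pi_\cT(K)^{-1}\sum_{L\sim K}w_{KL}\big(r_t(L)-r_t(K)\big)$, i.e.\ the heat flow for the reversible chain on the connected graph $(\cT,\sim)$ with conductances $w_{KL}$ and vertex weights $\pi_\cT(K)$; the generator is self-adjoint on $\ell^2(\pi_\cT)$ and the semigroup $P^\cT_t$ is Markov, so $s\mapsto\|r_s\|_{L^\infty(\pi_\cT)}$ is non-increasing and $\|P^\cT_t\|_{L^1(\pi_\cT)\to L^1(\pi_\cT)}=1$. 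I would first record, from $\zeta$-regularity and the two-sided bounds on $\sigma=\ddd\bfm/\ddd x$, the scaling relations $\pi_\cT(K)\eqsim[\cT]^d$, $|x_K-x_L|\eqsim[\cT]$, $\Haus^{d-1}(\Gamma_{KL})\eqsim[\cT]^{d-1}$ and $w_{KL}\eqsim[\cT]^{d-2}$ for $K\sim L$, together with a uniform bound on the number of neighbours per cell and hence $p(K,L)\eqsim1$; after rescaling space by $[\cT]^{-1}$ and time by $[\cT]^{-2}$ this is exactly a uniformly elliptic, bounded-geometry graph Laplacian of the type treated in \cite{chen2019heat2,chen2016stability}.

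\textbf{Geometric hypotheses.} Next I would verify, with constants depending only on $\Omega,\bfm,\zeta$ and valid for balls up to radius $\diam\Omega$, volume doubling (VD) and a scale-invariant Poincar\'e inequality (PI) for the graphs $(\cT_N,w,\pi_N)$. For (VD): since $\Omega$ is convex the graph distance satisfies $d_N(K,L)\eqsim|x_K-x_L|/[\cT_N]$ --- the upper bound by chaining cells crossing the segment $[x_K,x_L]\subseteq\bOmega$, the lower bound because one step moves the centre by $\lesssim[\cT_N]$ --- so $\pi_N\big(B_{d_N}(K,R)\big)\eqsim\min\{(R[\cT_N])^d,\diam(\Omega)^d\}$, convexity ensuring that balls centred near $\partial\Omega$ still carry volume $\gtrsim r^d$. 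For (PI): I would combine the Neumann--Poincar\'e inequality on the convex set $B(x_K,r)\cap\Omega$ (Poincar\'e constant $\lesssim r$ by Payne--Weinberger) with the comparisons $\|f-\bar f\|_{L^2(\pi_N)}\eqsim\|Q_Nf-\overline{Q_Nf}\|_{L^2(\Omega)}$ and the classical discrete Poincar\'e estimate for admissible meshes, which bounds the continuous Dirichlet energy of an interpolant of $f$ by the discrete form using only shape regularity (not isotropy); this gives (PI) on every graph ball.

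\textbf{Conclusion.} With (VD) and (PI) available, the parabolic Harnack inequality of \cite{chen2019heat2} applies to $r_t$ on the cylinders $[s,s+(R[\cT_N])^2]\times B_{d_N}(K,R)$; iterating over dyadic scales yields the standard oscillation decay $\operatorname{osc}_{B(x_K,\rho)}r_t\lesssim(\rho/\sqrt t)^\lambda\|r_{t/2}\|_{L^\infty(\pi_N)}$ for a fixed $\lambda=\lambda(\Omega,\bfm,\zeta)\in(0,1)$ and all $\rho\gtrsim[\cT_N]$, in particular in the near-diagonal range (including neighbours). Splitting into the cases $|x_K-x_L|\le\sqrt t$ and $|x_K-x_L|>\sqrt t$, and using monotonicity of $\|r_s\|_\infty$ in the latter, produces \eqref{eq:Holder} with $C\eqsim t^{-\lambda/2}$. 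The same hypotheses give the on-diagonal heat-kernel bound, equivalently the ultracontractivity estimate $\|P^{\cT_N}_t\|_{L^1(\pi_N)\to L^\infty(\pi_N)}\lesssim 1\vee t^{-d/2}$ of \cite[Theorem~1.20]{chen2016stability} --- the exponent $d$ because $\pi_N(B_{d_N}(\cdot,R))\eqsim(R[\cT_N])^d$ above the mesh scale, and the factor $1\vee$ because $\bfm(\Omega)<\infty$ produces the lower-order term in the corresponding Nash inequality; applying this to $r_0$ gives \eqref{eq:ultracontractivity_discrete}.

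\textbf{Main obstacle.} The hard part will be establishing (VD) and especially (PI) with constants uniform in $N$ and valid down to the mesh scale: the local discrete Poincar\'e inequality on graph balls requires controlling the boundary effects that arise because the graphs approximate a domain with boundary, and convexity of $\Omega$ (so that $B(x_K,r)\cap\Omega$ is again convex) is precisely what rescues this. I would also need to check that \cite{chen2019heat2,chen2016stability} are quoted in a form applicable to finite graphs for which (VD) and (PI) hold only up to their diameter, with the reflecting/Neumann behaviour that is automatic here since \eqref{eq:discrete-FP} conserves mass.
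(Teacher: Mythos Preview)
The paper does not actually prove this proposition: it is stated without proof, with only the prefatory sentence pointing to \cite{chen2019heat2} for the Harnack/H\"older part and \cite[Theorem~1.20]{chen2016stability} for ultracontractivity. Your proposal is therefore not competing with a proof in the paper but rather supplying the verification of hypotheses that the paper leaves implicit. In that sense your route---recast \eqref{eq:discrete-FP} as a reversible graph heat flow, check (VD) and (PI) uniformly in the mesh via $\zeta$-regularity and convexity of $\Omega$, then invoke the cited parabolic Harnack and Nash-type bounds---is exactly the intended one, and the scaling computations and the graph-distance comparison $d_N(K,L)\eqsim|x_K-x_L|/[\cT_N]$ are correct (the latter is essentially the content of the ``good paths'' \autoref{lemma:paths}).

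One point to tighten: in your Poincar\'e step you write that you will use the Neumann--Poincar\'e inequality on $B(x_K,r)\cap\Omega$ together with $\|f-\bar f\|_{L^2(\pi_N)}\eqsim\|Q_Nf-\overline{Q_Nf}\|_{L^2(\Omega)}$ and a bound on ``the continuous Dirichlet energy of an interpolant''. As written this cannot use the piecewise-constant embedding $Q_Nf$, since $\nabla Q_Nf=0$ a.e.; you need either a genuine $H^1$ interpolant on the admissible mesh (not entirely standard in this generality) or, more directly, to bypass the continuum detour and prove the discrete (PI) by the path/chaining method, for which \autoref{lemma:paths} already gives exactly the ingredients (length and multiplicity bounds). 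Either fix is routine, but the sentence as it stands hides this choice. With that adjustment your outline is complete and matches the paper's (citation-only) treatment.
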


We stress that the constants depend only on the aforementioned parameters.

We will also use the following auxiliary result from \cite[Corollary 4.4]{stefanelli2008brezis}. 

\begin{proposition}[Evolutionary $\Gamma$-liminf inequality]	\label{prop:evolGammaliminf}
	Let $\cX$ be a separable Hilbert space and fix $T > 0$. Let $g_N, g_\infty : (0,T) \times \cX \rightarrow [0,+\infty]$ be such that, for a.e. $t \in (0,T)$, 
	\begin{enumerate}[(i)]
		\item $g_N(t, \cdot), g_\infty(t, \cdot)$ are convex and lower semicontinuous;
		\item $\displaystyle g_\infty(t,\vphi) 
			\leq \inf 
				\big\{ 
					\liminf_{N \to \infty} 
					g_N(t,\vphi_N) 
						\suchthat 
					\vphi_N \weakto \vphi 
					\text{ in } \cX 
				\big\}
		$ for all $\vphi \in \cX$.
	\end{enumerate}
Then, for any $\vphi_N, \vphi \in L^2(0,T; \cX)$ with $\vphi_N \weakto \vphi$ in $L^2(0,T; X)$, we have
\begin{align}	\label{eq:liminf_condition_evol}
	\int_0^T g_\infty\big(t,\vphi(t)\big) \dd t 
		\leq \liminf_{N \to \infty} 
			\int_0^T g_N\big(t, \vphi_N(t)\big) \dd t.
\end{align}
\end{proposition}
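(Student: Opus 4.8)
The statement is a standard evolutionary $\Gamma$-liminf principle for time-integrals of convex functionals, taken from \cite{stefanelli2008brezis}, so the plan is to reconstruct the short argument rather than invent a new one. First I would reduce to a pointwise-in-time statement via Fatou's lemma: if I can show that for \emph{a given} weakly convergent sequence $\vphi_N \weakto \vphi$ in $L^2(0,T;\cX)$ one has, after passing to a subsequence, $g_\infty(t,\vphi(t)) \le \liminf_N g_N(t,\vphi_N(t))$ for a.e.\ $t$, then integrating and applying Fatou gives \eqref{eq:liminf_condition_evol} along that subsequence, and the usual subsequence-of-subsequence argument promotes it to the full sequence (since the right-hand side of \eqref{eq:liminf_condition_evol} is a $\liminf$, it suffices to establish the bound along an arbitrary subsequence realizing that $\liminf$).

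The core difficulty, and the reason the naive Fatou argument does not immediately close, is that weak convergence in $L^2(0,T;\cX)$ does \emph{not} imply $\vphi_N(t) \weakto \vphi(t)$ in $\cX$ for a.e.\ $t$; the hypothesis (ii) on $g_\infty$ only speaks about weak convergence in $\cX$ at a fixed point. The device to overcome this is Mazur's lemma / convexity: from $\vphi_N \weakto \vphi$ in the Hilbert space $L^2(0,T;\cX)$ one extracts, by Banach--Saks or by taking convex combinations, a sequence $\psi_M := \sum_{N} \lambda^M_N \vphi_N$ (finite convex combinations, indices $N \ge M$) with $\psi_M \to \vphi$ \emph{strongly} in $L^2(0,T;\cX)$, hence, along a further subsequence, $\psi_M(t) \to \vphi(t)$ strongly in $\cX$ for a.e.\ $t$. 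Now for a.e.\ fixed $t$ I use convexity of $g_N(t,\cdot)$ to bound $g_N(t,\psi_M(t)) \le \sum_N \lambda^M_N g_N(t,\vphi_N(t))$, take $\liminf$ in $M$ using lower semicontinuity of the "limit" functional obtained from hypothesis (ii) applied to the strongly (hence weakly) convergent sequence $\psi_M(t) \to \vphi(t)$, and arrive at $g_\infty(t,\vphi(t)) \le \liminf_M \sum_N \lambda^M_N g_N(t,\vphi_N(t))$. A convex-combination lemma (a convex combination of tail terms of a real sequence cannot exceed its $\limsup$, and by a diagonal choice of the $\lambda^M_N$ one can make it approach $\liminf_N g_N(t,\vphi_N(t))$) then yields the desired pointwise inequality $g_\infty(t,\vphi(t)) \le \liminf_N g_N(t,\vphi_N(t))$ for a.e.\ $t$.

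With the pointwise bound in hand I integrate over $(0,T)$ and apply Fatou's lemma (all integrands are nonnegative, so this is legitimate) to conclude
\[
	\int_0^T g_\infty\big(t,\vphi(t)\big)\dd t
	\le \int_0^T \liminf_{N\to\infty} g_N\big(t,\vphi_N(t)\big)\dd t
	\le \liminf_{N\to\infty}\int_0^T g_N\big(t,\vphi_N(t)\big)\dd t,
\]
which is \eqref{eq:liminf_condition_evol}. The only genuinely delicate point is the measurability bookkeeping in the "for a.e.\ $t$" statements — one must ensure that the exceptional null sets (where the pointwise strong convergence $\psi_M(t)\to\vphi(t)$ fails, where $g_N(t,\cdot)$ or $g_\infty(t,\cdot)$ fails to be convex/l.s.c., and where hypothesis (ii) fails) together form a null set, and that $t\mapsto g_N(t,\vphi_N(t))$ is measurable so that Fatou applies; these are routine given the hypotheses. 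I expect the convex-combination step (extracting a strongly convergent sequence of convex combinations and controlling the associated convex combination of the real values $g_N(t,\vphi_N(t))$) to be the main technical obstacle, but it is entirely standard and the reference \cite{stefanelli2008brezis} supplies the details.
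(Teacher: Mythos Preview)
The paper does not prove this proposition; it simply quotes it from \cite[Corollary 4.4]{stefanelli2008brezis}, so there is no in-paper argument to compare against. Your sketch, however, contains a genuine gap.

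Your plan is to establish the \emph{pointwise} inequality $g_\infty(t,\vphi(t)) \leq \liminf_N g_N(t,\vphi_N(t))$ for a.e.\ $t$ (after passing to a subsequence) and then integrate via Fatou. But this pointwise inequality is false in general, even along subsequences. Take $\cX=\R$, $g_N(t,x)=g_\infty(t,x)=(x-1)^2$, and $\vphi_N(t)=\operatorname{sign}(\sin 2\pi N t)$. Then $\vphi_N \weakto 0$ in $L^2(0,1)$, yet for a.e.\ $t$ one has $\vphi_N(t)=+1$ infinitely often, so $\liminf_N g_N(t,\vphi_N(t))=0 < 1 = g_\infty(t,0)$. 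The integrated inequality $1\leq 2$ of course still holds, but it cannot be reached by your route.

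The Mazur step does not repair this. Applying convexity of a \emph{fixed} $g_N(t,\cdot)$ to $\psi_M=\sum_{k}\lambda^M_k \vphi_k$ yields $g_N(t,\psi_M(t))\leq \sum_k \lambda^M_k\, g_N(t,\vphi_k(t))$; feeding the sequence $(\psi_M)_M$ into hypothesis (ii) then gives
\[
g_\infty(t,\vphi(t))\leq \liminf_M g_M(t,\psi_M(t)) \leq \liminf_M \sum_k \lambda^M_k\, g_M(t,\vphi_k(t)).
\]
The right-hand side involves $g_M(t,\vphi_k(t))$ with mismatched indices $M\neq k$, not the quantities $g_k(t,\vphi_k(t))$ that appear in \eqref{eq:liminf_condition_evol}. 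Your ``convex-combination lemma'' cannot bridge this: the coefficients $\lambda^M_k$ are fixed once, globally in $L^2(0,T;\cX)$, and cannot be re-chosen $t$ by $t$ to pick out the favourable terms. A correct argument must work directly at the level of the integrated functionals (e.g.\ via Fenchel duality and the conjugates $g_N^*(t,\cdot)$), not through a pointwise-in-$t$ liminf.
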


\medskip

\begin{proof}[Proof of Theorem \ref{thm:MAIN}]

	\emph{(i)}:
	The compactness of $(\mu^N)$ in $C\big([0,T];(\cP(\bOmega),\bW) \big)$ follows from Proposition \ref{prop:compactness1}.

	\smallskip

	\emph{(ii)}:
	We prove the inequalities in \eqref{eq:all}. 
	The inequalities \eqref{item:energy} and \eqref{item:Fisher} follow straightforwardly from the bounds of Theorem \ref{thm:lower_bounds}.
	More work is required to prove \eqref{item:metric derivative}, as we only have time-averaged bounds on $\cA_N^*(m_t^N, \dot m_t^N)$ along the discrete flows. 
	Here we proceed using Proposition \ref{prop:evolGammaliminf}.

	\smallskip
	\emph{Evolutionary lower bound for the relative entropy \eqref{item:energy}.} 
	In view of the weak convergence $\mathsf{Q}_N m_t^N \weakto \mu_t$, this bound follows from the liminf inequality for the entropies \eqref{eq:lower bound entropy_0} obtained in Theorem \ref{thm:lower_bounds}. 
	
	\smallskip
	\emph{Evolutionary lower bound for the Fisher information \eqref{item:Fisher}.}
	It follows from the H\"older regularity result in Proposition \ref{prop:hold_discrete} that the sequence of discrete measures $(m_t^N)_N$ satisfies \eqref{eq:nc} for any $t \in (0,T]$. 
	Consequently, $\liminf_{N \to \infty} \cI_N(m_t^N) \geq \bI(\mu_t)$ by the liminf inequality for the relative Fisher information \eqref{eq:lower bound Fisher info_0} obtained in Theorem \ref{thm:lower_bounds}. 
	Therefore, the desired inequality \eqref{item:Fisher} follows from Fatou's Lemma.

	\smallskip

	\emph{Evolutionary lower bound for the metric derivative \eqref{item:metric derivative}.} \label{subsubsec:metricderivative_evol_lb}
	To ensure that our densities are bounded away from $0$, 
	we set 
	\begin{align*}
		m_t^{N,\alpha} := (1-\alpha) m_t^N + \alpha \pi_N
			\tand
		\mu_t^{\alpha} := (1-\alpha) \mu_t + \alpha \bfm
	\end{align*}
	for $\alpha \in (0,1)$.
	Fix $0 < \delta < (1 \wedge T)$ and define $g_N, g_\infty : (\delta, T) \times L^2(\Omega, \bfm) \to [0, + \infty]$ by 
	\begin{align*}
		g_N^\alpha(t,\vphi):= 
			\begin{cases}
				\cA_N^*
				\big(
					m_t^{N,\alpha}, 
					(P_N \vphi)\pi_N
				\big) 
				&\text{if }\vphi \in \PCN \\
				+\infty &\text{otherwise}
			\end{cases},
			\qquad
			g_{\infty}^\alpha(t, \vphi) 
				:= 
			\bA^*(\mu_t^\alpha, \vphi \bfm).
	\end{align*} 
	We will check that the conditions (i) and (ii) of Proposition \ref{prop:evolGammaliminf} are satisfied. 

	\smallskip

	\noindent\textbf{Step 1.} \emph{Verification of conditions (i) and (ii)}.
	
	\smallskip

	Clearly, the maps $g_N^\alpha(t, \cdot)$ 
	are convex and lower semicontinuous in $L^2(\Omega, \bfm)$ 
	for every $t \in (\delta,T)$, 
	which shows that condition (i) holds.
	
	To verify condition (ii), 
	we pick $\eta \in L^2(\Omega)$ and $e_N \in \R^{\cT_N}$ 
	such that $Q_N e_N \weakto \eta$ in $L^2(\Omega)$. 
	We have to show that 
	$\liminf_{N \to \infty} 
		\cA_N^*(m_t^{N,\alpha}, e_N) 
	\geq \bA^*(\mu_t^\alpha, \eta)$.
	To show this, we will check the conditions \eqref{eq:ub}, \eqref{eq:lb}, and \eqref{eq:pc} of Theorem \ref{thm:lower_bounds}\eqref{item:speed}. 

	\smallskip

	\noindent \textbf{Step 2.} \emph{Verification of \eqref{eq:ub}, \eqref{eq:lb}, and \eqref{eq:pc}.}
	
	\smallskip

	By construction, $(m_t^{N,\alpha})_N$ clearly satisfies \eqref{eq:lb}.
	Moreover, the hypercontractivity estimate from Proposition \ref{prop:hold_discrete} implies that $(m_t^{N,\alpha})_N$ satisfies \eqref{eq:ub}.
	Therefore, it remains to show that $(m_t^{N,\alpha})_N$ satisfies \eqref{eq:pc}. 
	Clearly, it suffices to prove that this property holds for $(m_t^N)$.
	
	To show this, we fix $x_0 \in \Omega$ and $\eps > 0$.
	Let $\rho_t^N$ be the density of $\mu_t^N$ with respect to $\bfm$.
	Using the H\"older regularity and the hypercontractivity result from Proposition \ref{prop:hold_discrete}, we infer that
	\begin{align*}
		|\rho_t^N(x) - \rho_t^N(y)| 
			\leq C_t \Big(  \eps \sqrt{d} + 2 [\cT_N]\Big)^{\lambda} =: E_t^N(\eps) 
	\end{align*}
	for any $x, y \in Q_\eps(x_0)$, for a suitable $t$-dependent constant $C_t < \infty$ and $\lambda \in (0,1]$.
	It follows that 
	\begin{align}
		\label{eq:sandwich}
			\bigg(\sup_{Q_\eps(x_0)}  \rho_t^N \bigg) 
				- E_t^N(\eps) 
		\leq 
			\avint_{Q_\eps(x_0)} \rho_t^N \dd \bfm
		\leq 
			\bigg( \inf_{Q_\eps(x_0)} \rho_t^N \bigg)
				+ E_t^N(\eps). 
	\end{align}
	Taking into account that $r_t^N$ is a probability density, 
	it follows from the H\"older bound \eqref{eq:Holder} 
	that the famility $(\rho_t^N)_{N \geq 1}$ 
	is uniformly bounded in $L^\infty(\Omega, \bfm)$. 
	Hence, the Banach-Alaoglu Theorem yields 
	the existence of a subsequential weak$^*$-limit 
	$\rho_t \in L^\infty(\Omega, \bfm)$.
	Since $\bW(\mu_t^N, \mu_t) \to 0$, 
	we infer that $\mu_t = \rho_t \bfm$ and 
	$\int_{Q_\eps(x_0)} \rho_t^N \dd \bfm \to \mu_t(Q_\eps(x_0))$.
	Therefore, \eqref{eq:sandwich} yields 
	\begin{align*}
		\bigg( \limsup_{N \to \infty}
				\sup_{Q_\eps(x_0)} 
					\rho_t^N 
		\bigg)
					- C_t \big(  \eps \sqrt{d} \big)^{\lambda}
			\leq 
				\frac{\mu_t(Q_\eps(x_0))}{\bfm(Q_\eps(x_0))}
			\leq
		\bigg( \liminf_{N \to \infty}
				\inf_{Q_\eps(x_0)} 
					\rho_t^N 
		\bigg)	
					+ C_t \big(  \eps \sqrt{d} \big)^{\lambda}.
	\end{align*}
	Passing to the limit $\eps \to 0$ we obtain 
	\begin{align*}
		\lim_{\eps \to 0} 
			\liminf_{N \to \infty} 
				\sup_{x \in Q_\eps (x_0)} 
					\rho_t^N(x)
	\leq 
			\rho_t(x_0) 
	\leq 
		\lim_{\eps \to 0} 
				\limsup_{N \to \infty} 
				\inf_{x \in Q_\eps (x_0)} 
					\rho_t^N(x), 
	\end{align*}
	which is the desired result \eqref{eq:pc}.

	Therefore, we can apply Theorem \ref{thm:lower_bounds}\eqref{item:speed} to obtain the desired inequality 
	\begin{align*}	
		\liminf_{N \to \infty} 
			\cA_N^*(m_t^{N,\alpha}, e_N) 
		\geq 
			\bA^*(\mu_t^\alpha, \eta),
	\end{align*}	
	which implies that condition (ii) of Proposition \ref{prop:evolGammaliminf} is satisfied.

	\smallskip

	\noindent \textbf{Step 3.} \emph{Weak convergence of the time derivatives.}

	\smallskip	

	In order to apply Proposition \ref{prop:evolGammaliminf} we will now show that the sequence of time derivatives $\dot m^N$ is weakly convergent in $L^2\big((\delta, T); L^2(\Omega, \bfm)\big)$.

	Indeed, by self-adjointness of the discrete generator $\cL_N$ in $L^2(\cT_N, \pi_N)$ we have
	\begin{align*}
			\| \dot r_t^N \|_{L^2(\cT_N, \pi_N)}
		=   \| \cL_N r_t^N \|_{L^2(\cT_N, \pi_N)}
		\leq (t - \delta/2)^{-1} 
			\| r_{\delta/2}^N \|_{L^2(\cT_N, \pi_N)}
	\end{align*}
	for any $t > \delta/2$; see, e.g., \cite[Theorem~7.7]{brezis2010functional}. 
	Moreover, from \eqref{eq:ultracontractivity_discrete} we infer that 
	\begin{align*}
		\| r_t^N \|_{L^{\infty}(\cT_N, \pi_N)} 
			\lesssim 1 \vee t^{-\frac{d}{2} } 
	\end{align*}
	for $t >0$.
	As $\delta < 1$, it follows from these bounds that 
	\begin{align*}
		\int_{\delta}^T 
			 \| \rho_t^N \|_{L^2(\Omega, \bfm)}^2 
		\dd t  
	\lesssim
		T \delta^{-d}
	\tand
		\int_{\delta}^T 
			 \| \dot \rho_t^N \|_{L^2(\Omega, \bfm)}^2 
		\dd t  
	\lesssim 
		T \delta^{-(d + 1)}.
	\end{align*}
	The Banach-Alaoglu theorem implies that 
	any subsequence of $(\rho^N)_N$ has a subsequence 
	converging weakly in 
	$H^1 \big( (\delta,T); L^2(\Omega, \bfm) \big)$. 
	Since $\bW(\mu_t^N, \mu_t) \to 0$, 
	we infer that $\rho^N \weakto \rho$ in $H^1 \big( (\delta,T); L^2(\Omega, \bfm) \big)$, and $\rho_t = \frac{\dd \mu_t}{\dd \fm}$, as desired.

	Applying Proposition \ref{prop:evolGammaliminf} with 
	$\vphi_N(t) := \dot \rho_t^N $ and 
	$\vphi(t) := \dot \rho_t$, 
	we obtain 
	\begin{align*}
		\int_\delta^T 
			\bA^*(\mu_t^\alpha, \dot \mu_t) 
		\dd t 
	\leq 
		\liminf_{N \to \infty} 
		\int_\delta^T 
			\cA_N^*\big( m_t^{N,\alpha}, \dot m_t^N \big) 
		\dd t.
	\end{align*}

	\smallskip

	\noindent \textbf{Step 4.} \emph{Removal of the regularisation.}

	\smallskip

	Using the weak convergence $\mu_t^\alpha \weakto \mu_t$ as $\alpha \to 0$ and the weak lower-semicontinuity of $\bA^*(\cdot, \dot \mu_t)$, an application of Fatou's lemma yields
	\begin{align*}
		\int_\delta^T 
		\bA^*(\mu_t, \dot \mu_t) 
	\dd t 
\leq
	\liminf_{\alpha \to 0} 	
	\liminf_{N \to \infty} 
	\int_\delta^T 
		\cA_N^*\big( m_t^{N,\alpha}, \dot m_t^N \big) 
	\dd t.
	\end{align*}
	By convexity, we obtain
	\begin{align*}
		\cA_N^*\big( m_t^{N,\alpha}, \dot m_t^N \big) 
		\leq (1- \alpha)
			\cA_N^*\big( m_t^N, \dot m_t^N \big) 
		+  \alpha
			\cA_N^*\big( \pi_N, \dot m_t^N \big).
	\end{align*}
	We claim that
	$
		A := \sup_N \sup_{t \geq \delta} \cA_N^*\big( \pi_N, \dot m_t^N \big) < \infty
	$.
	Indeed, in view of the self-adjointness of the discrete generator $\cL_N$ and the ultracontractivity bound \eqref{eq:ultracontractivity_discrete}, we infer that
	\begin{align*}
		  \cA_N^*(\pi_N, \dot m_t^N)
		= \cA(\pi_N, r_t^N)
		= \cE_N(r_t^N)
		& \leq t^{-1} 
			\| r_t^N \|_{L^2(\cT_N, \pi_N)}^2
		\leq C t^{-1} 
		\big( 1 \vee t^{-d} \big),
	\end{align*}
	which yields the claim.
	Consequently, we obtain
	\begin{align*}
		\int_\delta^T 
		\bA^*(\mu_t, \dot \mu_t) 
	\dd t 
\leq
	\liminf_{N \to \infty} 	
	\int_\delta^T 
		\cA_N^*\big( m_t^N, \dot m_t^N \big) 
	\dd t.
	\end{align*}
	
	The final result follows by passing to the limit $\delta \rightarrow 0$.

\smallskip

\emph{(iii)}: This follows immediately by combining the inequalities from \emph{(ii)}. 
\end{proof}

\section{Mosco convergence of discrete energies: proof strategy} \label{section:mosco}

In this section we give a sketch of the proof of the Mosco convergence of the discrete energy functionals (Theorem \ref{thm:Mosco}).
This result is a key tool in the proof of evolutionary $\Gamma$-convergence; cf.~Section \ref{section:evolut_gamma_Wass}.  
Let us first recall the relevant definitions.

\begin{definition}[$\Gamma$- and Mosco convergence]	\label{def:Gamma}
	Let $\cF, \cF_N : \cX \rightarrow \R \cup \{ +\infty\}$ be functionals  defined on a complete metric space $\cX$. 
	The sequence $(\cF_N)_N$ is said to be {$\Gamma$-convergent} to $\cF$ if the following conditions hold:
\begin{enumerate}[(i)]
	\item For every sequence $(x_N)_N \subseteq \cX$ such that $x_N \to x \in \cX$ we have the \emph{liminf inequality}
	\begin{align}
		\liminf_{N \rightarrow \infty} \cF_N(x_N) \geq \cF(x).
	\end{align}
	\item For every $\bar x \in \cX$ there exists a \emph{recovery sequence} $(\bar x_N)_N \subseteq \cX$, i.e., $x_N \to x$ and  
	\begin{align}	
		\limsup_{N \rightarrow \infty} \cF_N(x_N) \leq \cF(x).
	\end{align}
\end{enumerate}
	If $\cX$ is a complete topological vector space, we say that $(\cF_N)_N$ is {Mosco convergent} to $\cF$ if the same conditions hold, with the modification that \emph{weakly} convergent sequences are considered in the liminf inequality.
\end{definition}

We use the notation $\cF_N \xrightarrow[]{\Gamma} \cF$ and   $\cF_N \xrightarrow[]{M} \cF$ to denote $\Gamma$- and Mosco convergence.

Let us now fix the setup, which remains in force throughout Sections \ref{section:mosco}, \ref{section:moscop}, and \ref{section:compactness_representation}.
Consider a family of $\zeta$-regular meshes $(\cT_N)_N$ with $[\cT_N] \to 0$ as $N \to \infty$. 
We then consider a measure $\mu \in \cP(\bOmega)$, 
and let  $\upsilon \in L^1(\Omega)$ be its density with respect to the Lebesgue measure.
At the discrete level we consider measures $m_N \in \cP(\cT_N)$.
We define the corresponding energy functionals $\cF_N$, $\tbF_N$, and $\bF_\mu$ as in Section \ref{section:statements}. 
The goal is to prove the Mosco convergence in $L^2(\Omega)$ of $\tbF_N$ to $\bF_\mu$ as $N \to \infty$ under the assumptions \eqref{eq:lb}, \eqref{eq:ub}, and \eqref{eq:pc}.
 
Our strategy is based on a compactness and representation procedure, following ideas from \cite{Alicandro2004}.
A key ingredient in the proof is 
a representation result from \cite[Theorem 2]{Bouchitte2002}, 
for which we need to perform a localisation procedure. 
Let $\cO(\Omega)$ be the collection of all open subsets of $\Omega$.
For $A \in \cO(\Omega)$ we then introduce the functionals
$\cF_\cT : L^2(\cT, \pi_\cT) \times \cO(\Omega) 
\to [0, +\infty)$ 
by
\begin{align*}
	\cF_\cT(f,A)
	&:= 
		\frac14
		\sum_{ 
		 	\substack{ 
		 		K,L \in \cT \vert_A
		 	}
		} 
		\big( f(K) - f(L) \big)^2 
		U_{KL}
		\frac{|\Gamma_{KL}|}{d_{KL}},
\end{align*}
where, for any subset $A \subseteq \Omega$,
\begin{align}	\label{eq:defTA}
	\cT\vert_A := 
		\left\{ K \in \cT \suchthat
			 \overline{K} \cap A \neq \emptyset \right\}
\end{align}
and $U_{KL}$ is as in Section \ref{section:statements}.
The corresponding embedded functional 
$\tbF_\cT : L^2(\Omega) \times \cO(\Omega) \to [0, + \infty]$ 
is given by
\begin{align*}
	\tbF_\cT(\vphi,A)
	&:= \begin{cases}
			\cF_\cT(P_{\cT} \vphi, A)
				& \text{if }\vphi \in \PCcT \\
			+\infty 
				& \text{otherwise},
		\end{cases}
\end{align*}
where $P_\cT$ is the projection defined in \eqref{eq:PT}.

The proof of Theorem \ref{thm:Mosco} consists of the following steps:

\begin{enumerate}[(Step 1)]
\item \label{item2} We show first, as in \cite[Proposition 3.4]{Alicandro2004}, that any subsequential $\Gamma$-limit point $\bF(\cdot, A)$ of the sequence $\big(\tbF_N(\cdot,A)\big)_N$ is only finite on $H^1(\Omega)$. 
This result is a prerequisite for performing Step \ref{item3}.
We also show that $\Gamma$-convergence implies Mosco convergence in this situation.

\item \label{item1}
For any subsequential $\Gamma$-limit point $\bF(\cdot, A)$, we prove an inner regularity result. 
Using this result, we can apply a compactness result \cite[Theorem 10.3]{Braides1998} to infer that there exists a subsequence, such that,  for any $A \in \cO(\Omega)$, the functionals $\big(\tbF_N(\cdot,A)\big)_N$ $\Gamma$-converge to a limiting functional $\bF(\cdot, A)$. 

\item \label{item3} We prove the applicability of a representation theorem \cite[Theorem 2]{Bouchitte2002}, which allows us to deduce the following expression
\begin{align}	\label{eq:E0}
	\bF(\vphi)
	= \int_{\Omega} F(x,\vphi,\nabla \vphi) \dd x.
\end{align}
\item \label{item4} In view of the previous steps, it remains to 
show that $F(x, u,\xi) = \upsilon(x)|\xi|^2$.
\end{enumerate}

\smallskip

Steps \ref{item2} and \ref{item1} will be carried out in Section \ref{section:moscop}, while Steps \ref{item3} and \ref{item4} will be performed in Section \ref{section:compactness_representation}.

\section{Mosco convergence of the localised functionals}
\label{section:moscop}

In this section we perform Steps 1 and 2 of the proof strategy described above. 
As before, we consider a vanishing sequence of $\zeta$-regular meshes $(\cT_N)_N$
and a sequence of discrete measures $m_N \in \cP(\cT_N)$.
We will prove the following results.

\begin{theorem}[Regularity of $\Gamma$-limits]	\label{thm:Sob-finite}
Assume \eqref{eq:lb}. 
For $A \in \cO(\Omega)$, 
let $\bF(\cdot, A)$ be a subsequential $\Gamma$-limit 
of the sequence $\big( \tbF_N(\cdot,A) \big)_N$ 
in the $L^2(\Omega)$-topology.
Then $\bF(\vphi, A) = + \infty$ 
for any $\vphi \notin H^1(\Omega)$. 
Moreover, the subsequence is also convergent in the Mosco sense.
\end{theorem}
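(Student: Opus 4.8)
The plan is to follow the strategy of \cite[Proposition 3.4]{Alicandro2004}. The core of the matter is a discrete-to-continuum estimate relating the embedded functionals $\tbF_N$ to a uniform $H^1$-bound. Fix $A \in \cO(\Omega)$ and let $\bF(\cdot,A)$ be a subsequential $\Gamma$-limit of $\big(\tbF_N(\cdot,A)\big)_N$ in $L^2(\Omega)$. Take $\vphi \in L^2(\Omega)$ with $\bF(\vphi,A) < +\infty$; we must show $\vphi \in H^1(A)$ (and hence, since $A$ is arbitrary, $\vphi \in H^1(\Omega)$ once we run the argument with $A = \Omega$). Let $(\vphi_N)_N$ be a recovery sequence, so $\vphi_N \to \vphi$ in $L^2(\Omega)$ and $\sup_N \tbF_N(\vphi_N,A) < +\infty$; in particular $\vphi_N \in \PCcT[N]$, so $\vphi_N = Q_N f_N$ for $f_N = P_N\vphi_N$. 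The lower bound \eqref{eq:lb} gives $U_{KL} \geq c\,\underline k$ for a constant depending only on $\bfm$ and $\zeta$ (using $S_{KL} \geq \min\{\sigma(x_K),\sigma(x_L)\}$, which is bounded below on $\bOmega$), so that
\begin{align*}
	\sum_{K,L \in \cT_N|_A} \big(f_N(K) - f_N(L)\big)^2 \frac{|\Gamma_{KL}|}{d_{KL}}
		\lesssim \frac{1}{\underline k}\,\tbF_N(\vphi_N,A) \lesssim \frac{1}{\underline k}.
\end{align*}

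The key step is then to show that this discrete Dirichlet-type bound, together with $\zeta$-regularity, controls a uniform $H^1$-norm of a suitable continuous interpolation. Here I would introduce a piecewise affine (or, more robustly in arbitrary meshes, a mollified/averaged) interpolant $\widetilde\vphi_N$ built from the cell values $f_N$ on the union of the cells meeting $A$, and prove the two estimates $\|\widetilde\vphi_N - \vphi_N\|_{L^2} \lesssim [\cT_N]\,(\tbF_N(\vphi_N,A))^{1/2} \to 0$ and $\|\nabla\widetilde\vphi_N\|_{L^2(A)}^2 \lesssim \tbF_N(\vphi_N,A)/\underline k$. Both follow from $\zeta$-regularity: the inner-ball and area-bound conditions give $|\Gamma_{KL}|/d_{KL} \eqsim [\cT_N]^{d-2}$ and $|K| \eqsim [\cT_N]^d$, so a finite-difference quotient across $\Gamma_{KL}$ is comparable, after multiplying by the cell volume, to the corresponding term in the discrete sum; summing over cells and neighbours (using bounded combinatorial degree of $\zeta$-regular meshes) yields the gradient bound, and a Poincaré-type estimate on each cell yields the $L^2$ comparison. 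Consequently $(\widetilde\vphi_N)_N$ is bounded in $H^1$ on an open set exhausting $A$, and since $\widetilde\vphi_N \to \vphi$ in $L^2$, weak compactness in $H^1$ forces $\vphi \in H^1(A)$. Running this with $A=\Omega$ gives $\bF(\vphi) = +\infty$ for $\vphi \notin H^1(\Omega)$, as claimed.

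For the last sentence — that the subsequence also converges in the Mosco sense — it suffices to upgrade the $\Gamma$-liminf inequality from strong to weak $L^2$-convergence, since the recovery sequences (the $\Gamma$-limsup part) are unchanged. So suppose $\vphi_N \weakto \vphi$ weakly in $L^2(\Omega)$ with $\liminf_N \tbF_N(\vphi_N,A) =: C < +\infty$; passing to a subsequence achieving the liminf with $\sup_N \tbF_N(\vphi_N,A) < +\infty$, the estimates of the previous paragraph show the interpolants $\widetilde\vphi_N$ are bounded in $H^1$, hence precompact in $L^2_{\mathrm{loc}}$ by Rellich, and since $\widetilde\vphi_N - \vphi_N \to 0$ in $L^2$ the weak limit of $\vphi_N$ coincides with the (strong, up to subsequence) limit of $\widetilde\vphi_N$, namely $\vphi$; thus $\vphi_N \to \vphi$ strongly in $L^2$ along this subsequence and the strong $\Gamma$-liminf inequality applies, giving $\bF(\vphi,A) \leq \liminf_N \tbF_N(\vphi_N,A)$. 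Together with the unchanged recovery sequences this is exactly Mosco convergence.

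The main obstacle is the construction and analysis of the interpolation operator $\widetilde\vphi_N$ on a general admissible $\zeta$-regular mesh: on the cartesian grid of \cite{Alicandro2004} one simply uses the standard bilinear/Kuhn-triangulation interpolant, but for arbitrary polytopal cells with the points $x_K$ possibly on the boundary of $\overline K$ one must be careful that the interpolant is well-defined, stays close to $\vphi_N$ in $L^2$, and has its gradient controlled by the edge differences weighted by $|\Gamma_{KL}|/d_{KL}$ — which is precisely where $\zeta$-regularity (inner ball, area bound, and the resulting bounded overlap and volume/surface comparisons) must be used quantitatively. Everything else is a routine compactness argument.
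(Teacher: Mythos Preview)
Your logical skeleton is sound, but the load-bearing step---constructing a continuous interpolant $\widetilde\vphi_N$ on an arbitrary admissible polytopal mesh with the stated $H^1$ bound---is precisely the step the paper deliberately avoids. The authors remark that the affine-interpolation argument of \cite[Proposition~3.4]{Alicandro2004} ``is not available in the present context,'' and you yourself flag this as the main obstacle without resolving it. On a general $\zeta$-regular mesh the cells are arbitrary convex polytopes, the nodes $x_K$ may sit on $\partial\overline K$, and there is no canonical conforming triangulation matching across faces; so the claimed bound $\|\nabla\widetilde\vphi_N\|_{L^2(A)}^2 \lesssim \cF_N(f_N,A)/\underline k$ is not routine and, as stated, is a gap.

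The paper bypasses interpolation entirely. Instead it proves an $L^2$-H\"older estimate for the piecewise-constant embedding itself (Lemma~\ref{lem:sobolev_limit}):
\[
	\|\tau_h\vphi_N - \vphi_N\|_{L^2(A_{|h|})}^2
		\lesssim \frac{|h|}{\underline k}\,\big(|h|\vee[\cT_N]\big)\,\cF_N(f_N,A),
\]
obtained by chaining nearest-neighbour differences along the ``good paths'' of Lemma~\ref{lemma:paths} and counting how many pairs $(K,L)$ with $K\stackrel{h}{\sim}L$ use a given edge. This estimate, via Kolmogorov--Riesz--Fr\'echet, gives strong $L^2$-precompactness of bounded-energy sequences directly, and passing to the limit in the same inequality (with $[\cT_N]\to 0$) yields $\|\tau_h\vphi-\vphi\|_{L^2}\lesssim |h|$, hence $\vphi\in H^1$ by the difference-quotient characterisation. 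The Mosco upgrade then follows exactly as you argue: bounded energy forces strong $L^2$-convergence along subsequences, so weak and strong $\Gamma$-liminf coincide. This translation-based route is both shorter and mesh-agnostic; your interpolation route would require substantial additional work to close.
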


The proof of this result is contained in Section \ref{subsec:reg-finite-energy} and relies on an $L^2$-H\"older continuity result (Proposition \ref{prop:sobolev_limit}).

\begin{theorem}[Localised Mosco compactness] 
\label{thm:mosco-pre_exist}
Assume \eqref{eq:lb} and \eqref{eq:ub}.
There exists a subsequence of $(\tbF_N)_N$ such that, 
for any $A \in \cO(\Omega)$, 
the sequence $\big( \tbF_N(\cdot,A) \big)_N$ 
is Mosco convergent in $L^2(\Omega)$-topology.
\end{theorem}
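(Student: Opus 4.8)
\textbf{Proof proposal for Theorem \ref{thm:mosco-pre_exist} (Localised Mosco compactness).}

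The plan is to apply the abstract compactness theorem for $\Gamma$-convergence of local functionals indexed by open sets, \cite[Theorem 10.3]{Braides1998}, to the family $\big(\tbF_N(\cdot,A)\big)_N$ viewed as set functions $A \mapsto \tbF_N(\cdot,A)$ on $\cO(\Omega)$, and then to upgrade the resulting $\Gamma$-convergence to Mosco convergence using Theorem \ref{thm:Sob-finite}. The abstract theorem requires us to verify a few structural hypotheses on the localised functionals: (a) that for each fixed $\vphi$ the map $A \mapsto \tbF_N(\vphi,A)$ is the restriction of a Borel measure (or at least is increasing, inner regular and superadditive/subadditive in the appropriate sense), and (b) a uniform domination bound of the form $\tbF_N(\vphi,A) \leq C\big(\int_A |\nabla\vphi|^2\dd x + |A|\big)$ on a dense class, together with the lower bound $\tbF_N(\vphi,A)\geq 0$. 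First I would record that $\tbF_N(\cdot,A)$ is nonnegative, convex, lower semicontinuous in $L^2(\Omega)$, and that $A\mapsto \tbF_N(\vphi,A)$ is non-decreasing and (finitely) superadditive on disjoint open sets, since it is a sum over edges $K\sim L$ with $\overline K\cap A\neq\emptyset$; in fact each such edge is counted in at most boundedly many disjoint pieces, so one gets superadditivity up to controlled boundary overcounting, which is exactly what the inner-regularity/De Giorgi--Letta machinery tolerates.

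\textbf{Key steps, in order.}
\emph{Step A (upper bound on a dense class).} Using \eqref{eq:ub}, the $\zeta$-regularity of the meshes (which controls $|\Gamma_{KL}|/d_{KL}\lesssim [\cT_N]^{d-2}$ per edge and bounds the number of neighbours of each cell), and the elementary estimate $|f(K)-f(L)|\leq \mathrm{osc}_{\overline K\cup\overline L}\vphi$, I would show that for $\vphi\in C^1(\bOmega)$ one has $\tbF_N(P_N\vphi,A)\leq C\int_{A'}|\nabla\vphi|^2\dd x$ for a slight enlargement $A'$ of $A$, with $C=C(\Omega,\zeta,\bar k)$; this provides the equi-coercivity-from-above needed to apply \cite[Theorem 10.3]{Braides1998} and also shows the prospective $\Gamma$-limit is finite on $C^1$, hence (by Step 1 / Theorem \ref{thm:Sob-finite}) finite exactly on $H^1$.
\emph{Step B (measure-property / fundamental estimate).} Verify the ``fundamental estimate'' of \cite[Ch.~10--11]{Braides1998}: given $A'\Subset A$ and $B\in\cO(\Omega)$, and functions $\vphi,\psi$, one can glue $P_N\vphi$ on $A'$ and $P_N\psi$ on $B\setminus A$ using a discrete cutoff supported on the cells meeting the transition layer, with error controlled by \eqref{eq:ub} and $\|\vphi-\psi\|_{L^2}$ times $[\cT_N]^{-1}$ on the (thin) overlap — but since the cutoff gradient lives on $O(1/\mathrm{dist}(A',\partial A))$ many layers, the error is $\lesssim \|\vphi-\psi\|_{L^2(\text{layer})}^2/\mathrm{dist}(A',\partial A)^2$, which is the standard form. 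Here the discrete setting requires care that the cutoff be mesh-adapted (constant on cells) and that $\zeta$-regularity bounds the edge weights in the transition region; modulo this, the estimate is classical.
\emph{Step C (apply the abstract compactness theorem).} By \cite[Theorem 10.3]{Braides1998}, there is a subsequence along which $\tbF_N(\cdot,A)$ $\Gamma$-converges in $L^2(\Omega)$ for every $A\in\cO(\Omega)$ to a functional $\bF(\cdot,A)$ which is the trace of a measure in $A$. (The paper's Step 2 localisation/inner-regularity lemma is what feeds this; I would cite it.)
\emph{Step D (Mosco upgrade).} Invoke Theorem \ref{thm:Sob-finite}: under \eqref{eq:lb} the $\Gamma$-limit is finite only on $H^1(\Omega)$, and — as asserted there — on this class $\Gamma$-convergence in the strong $L^2$ topology is equivalent to Mosco convergence, because the $\Gamma$-$\liminf$ inequality self-improves to weak sequences via the $L^2$-H\"older/compactness bound of Proposition \ref{prop:sobolev_limit} (a bounded-energy sequence in $L^2$ is precompact in $L^2$, so weak and strong $\liminf$ coincide), and the recovery sequence is unchanged. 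This yields localised Mosco convergence along the subsequence for every $A$.

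\textbf{Main obstacle.} The delicate point is Step B: carrying out the De Giorgi--Letta ``fundamental estimate'' (the join/cutoff construction) \emph{on an arbitrary $\zeta$-regular mesh rather than a cartesian grid}, so that the cutoff is admissible (piecewise constant on cells), the interaction across the seam between the two configurations is counted only over edges in a thin, controllable layer, and the resulting error genuinely vanishes in the limit. On the cartesian grid (as in \cite{Alicandro2004}) this is routine; for general admissible meshes one must use $\zeta$-regularity to bound the number and weights of the ``cross'' edges $\Gamma_{KL}$ straddling a level set of the cutoff, and to ensure the layer has small measure. Everything else — nonnegativity, convexity, lower semicontinuity, monotonicity in $A$, and the upper bound on $C^1$ — is either immediate or a direct consequence of \eqref{eq:ub} and $\zeta$-regularity; and the Mosco upgrade is handed to us by Theorem \ref{thm:Sob-finite} together with the compactness in Proposition \ref{prop:sobolev_limit}.
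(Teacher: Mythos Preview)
Your proposal is correct and follows essentially the same route as the paper: apply \cite[Theorem 10.3]{Braides1998} after establishing a Sobolev upper bound (your Step A, the paper's Proposition \ref{prop:sobolev_bound}) and a gluing/cutoff estimate (your Step B, the paper's Proposition \ref{prop:inner_regularity}), then upgrade to Mosco via Theorem \ref{thm:Sob-finite}. The only cosmetic difference is that the paper packages Step B as \emph{inner regularity of any subsequential $\Gamma$-limit} rather than as a uniform fundamental estimate for $\tbF_N$; the underlying De Giorgi slicing argument (averaging over $M$ nested cutoffs and choosing the best layer) is the same, and your identification of the mesh-adapted cutoff on a $\zeta$-regular mesh as the delicate point is spot on.
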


The proof of this result is contained 
in Section \ref{section:sobolev} 
and relies on an inner regularity result 
(Proposition \ref{prop:inner_regularity}). 
The latter result will be proved 
using a Sobolev upper bound 
(Proposition \ref{prop:sobolev_bound}).

\subsection{Regularity of finite energy sequences}\label{subsec:reg-finite-energy}

In this subsection we prove that 
any subsequential $\Gamma$-limit $\bF$ 
of the sequence $\big( \tbF_N(\cdot,A) \big)_N$  
is only finite on Sobolev maps, 
which allows us to work with Theorem \ref{thm:representation}. 
A corresponding result was proved on the cartesian grid in \cite[Proposition 3.4]{Alicandro2004}, using affine interpolations of vector fields that are not available in the present context.

For $h \in \R^d$ we write $K \simh L$ if $\overline{K} \cap (\overline{L} + h) \neq \emptyset$.

\begin{lemma}[Existence of good paths]	\label{lemma:paths}
Let $\cT$ be a $\zeta$-regular mesh. Then there exists a family of paths $\{ \gamma_{KL} \}_{ K, L \in \cT}$, where
\begin{align*}
 	\gamma_{KL} = \{\gamma_{KL}(i) \suchthat i = 0, \ldots, n_{KL}  \},
	 \quad	
			  K = \gamma_{KL}(0) \sim \gamma_{KL}(1) \sim \ldots \sim \gamma_{KL}(n_{KL}) = L,
\end{align*}
such that the following properties hold:
\begin{enumerate}
\item For all $K, L \in \cT$ we have
\begin{align}  \label{eq:lemma_paths_distancebound}
	n_{KL} 
		\lesssim 
	\frac{|x_{K} - x_{L}|}{[\cT]} 
\tand
	\sum_{i=0}^{n_{KL}} 
	|x_{\gamma_{KL}(i)} - x_{\gamma_{KL}(i+1)}| 
		\lesssim 
	|x_{K} - x_{L}|;
\end{align}
\item For any $h \in \R^d$ and $M, N \in \cT$ with $M \sim N$ we have
\begin{align} \label{eq:lemma_paths_countingcouples}
	\# \left\{ (K,L) \in \cT^2 \suchthat K \simh L, \;  \{ M, N \} \subset \gamma_{KL} \right\} \lesssim 1 \vee \frac{|h|}{[\cT]}.
\end{align}
\end{enumerate}
The implied constants depend only on $\Omega$ and $\zeta$.
\end{lemma}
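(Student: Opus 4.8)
\textbf{Proof strategy for Lemma \ref{lemma:paths}.}

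The plan is to build the paths $\gamma_{KL}$ by discretising the Euclidean segment $[x_K, x_L]$ and, at each step where the segment crosses a cell boundary, inserting a short ``detour'' of neighbouring cells so that consecutive cells in the path are actually adjacent in the mesh (recall that a straight segment through an admissible mesh need not pass only through successively neighbouring cells). First I would fix $K, L$ and parametrise the segment $t \mapsto (1-t) x_K + t x_L$, recording the (finite) ordered list of cells $K = K_0, K_1, \dots, K_m = L$ whose interiors the segment meets, in order of first entry. By $\zeta$-regularity each such cell contains a ball of radius $\zeta[\cT]$, so the segment must travel at least a distance $\eqsim \zeta[\cT]$ inside each cell it genuinely traverses; hence $m \lesssim |x_K - x_L|/[\cT]$, and moreover the points $x_{K_0}, \dots, x_{K_m}$ all lie within distance $\lesssim [\cT]$ of the segment, so $\sum_i |x_{K_i} - x_{K_{i+1}}| \lesssim |x_K - x_L|$. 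The remaining issue is that $K_i$ and $K_{i+1}$ need not satisfy $K_i \sim K_{i+1}$: they meet the segment consecutively but could touch only along a lower-dimensional face, or the segment could pass exactly through a vertex. To fix this I would, between each $K_i$ and $K_{i+1}$, insert a bounded-length chain of cells connecting them through genuine $(d{-}1)$-dimensional interfaces; such a chain of uniformly bounded length (depending only on $d$ and $\zeta$) exists because all these cells are contained in a ball of radius $C[\cT]$ around the crossing point and a volume/area count using $\zeta$-regularity bounds the number of cells meeting such a ball by a constant. This preserves both estimates in \eqref{eq:lemma_paths_distancebound} up to the dimensional constant.

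For property (2) I would argue by a geometric localisation. Fix $h \in \R^d$ and an edge $\{M,N\}$ with $M \sim N$. If $\{M,N\} \subset \gamma_{KL}$ for some pair $(K,L)$ with $K \simh L$, then by construction the segment $[x_K, x_L]$ passes within distance $C[\cT]$ of the interface $\Gamma_{MN}$; equivalently, fixing a reference point $z \in \Gamma_{MN}$, the line through $x_K$ in direction $x_L - x_K$ (whose length is $|x_K - x_L| \eqsim |h|$, since $K \simh L$ forces $|x_K - x_L| \lesssim |h| \vee [\cT]$ and $\gtrsim$ a bounded quantity) comes $C[\cT]$-close to $z$. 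The key point is then to count such pairs: the cell $K$ must lie in a tube of radius $\eqsim [\cT]$ and length $\eqsim |h| \vee [\cT]$ around the relevant direction through a $C[\cT]$-neighbourhood of $z$, and similarly for $L$; but once $K$ is chosen, $L$ is constrained to a bounded-volume region (since the segment from $x_K$ must pass near $z$ and have the right length, pinning $x_L$ to a ball of radius $\lesssim [\cT]$). A volume count using the inner-ball condition $B(x_K, \zeta[\cT]) \subseteq K$ then gives that the number of admissible $K$ is $\lesssim (|h| \vee [\cT])/[\cT] = 1 \vee |h|/[\cT]$, while for each $K$ the number of admissible $L$ is $\lesssim 1$. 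Multiplying yields \eqref{eq:lemma_paths_countingcouples}.

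I expect the main obstacle to be property (2), specifically making rigorous the claim that ``$\{M,N\} \subset \gamma_{KL}$'' constrains $(x_K, x_L)$ to a thin tube and then converting that into a clean counting bound uniform over all $\zeta$-regular meshes — in particular handling the degenerate regime $|h| \lesssim [\cT]$, where the bound should just be $O(1)$, separately from the regime $|h| \gg [\cT]$. A secondary technical point is ensuring the detour-insertion in part (1) is carried out in a way that keeps the family $\{\gamma_{KL}\}$ consistent enough that the counting in (2) still goes through; it may be cleanest to define, once and for all, a fixed rule assigning to each ordered crossing point a specific short connecting chain, so that membership of $\{M,N\}$ in $\gamma_{KL}$ is controlled by a bounded number of geometric near-incidences. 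Throughout, every estimate reduces to two facts about $\zeta$-regular meshes: each cell contains a ball of radius $\zeta[\cT]$ (hence at most $C(d,\zeta)$ cells meet any ball of radius $C[\cT]$, and any traversed cell contributes length $\gtrsim [\cT]$ to a crossing segment), and cells have diameter $\leq [\cT]$ (hence cell centres track the segment up to $O([\cT])$).
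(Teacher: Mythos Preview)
Your approach matches the paper's: part (1) is cited from \cite[Lemma~2.12]{gladbach2018}, where the paths are built along the segment $[x_K,x_L]$ with the key property that every cell on $\gamma_{KL}$ lies in a tube of radius $O([\cT])$ around that segment; part (2) is then a cylinder-plus-volume count exactly as you outline.

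There is, however, a real gap in your part (2). You use that $K \simh L$ forces $|x_K - x_L| \lesssim |h| \vee [\cT]$, but the crucial fact is stronger: $\overline K \cap (\overline L + h) \neq \emptyset$ gives $|x_K - x_L - h| \leq 2[\cT]$, so the \emph{vector} $x_K - x_L$ is pinned to $h + O([\cT])$. This is what fixes ``the relevant direction'' of your tube to be the direction of $h$; without it the tube depends on $(K,L)$ and a single volume comparison does not work. The same observation also gives the $L$-count for free: once $K$ is fixed, $K \simh L$ alone forces $x_L \in B(x_K - h, 2[\cT])$, hence $O(1)$ choices by $\zeta$-regularity --- this is exactly the paper's argument. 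Your stated justification (``the segment from $x_K$ must pass near $z$ and have the right length'') only confines $x_L$ to an arc, not a ball of radius $O([\cT])$, and does not suffice. A smaller point in part (1): a segment can graze a convex cell over arbitrarily short length, so ``travels $\gtrsim \zeta[\cT]$ per traversed cell'' is false as stated; the robust fix is the same tube-volume count you invoke for part (2), applied to the tube of radius $[\cT]$ around $[x_K,x_L]$.
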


\begin{proof}
Part (1) has been proved in \cite[Lemma 2.12]{gladbach2018}, 
so we focus on (2). 

Fix $h \in \R^d$ and $M, N \in \cT$ with $M \sim N$.
Without loss of generality we may assume that $x_M = 0$ and $h$ is parallel to the $d$-th unit vector in $\R^d$. 
Let $\cS$ be the set whose cardinality we would like to bound, and let $\cS_1$ be the collection of all $K \in \cT$ 
such that $(K,L) \in \cS$ for some $L \in \cT$. 

We claim that
\begin{align} \label{eq:proof_lemmabounds}
	\bigcup_{K \in \cS_1} K \subset \Cyl({r}, {\ell})
\end{align} 
for some $r \lesssim [\cT]$ and $\ell \lesssim |h| + [\cT]$. Here, $\Cyl(r, \ell)$ denotes the cylinder of radius $r > 0$ and height $2 \ell > 0$, i.e.,
\begin{align*}
	\Cyl(r, \ell) := 
		\big\{ 
			v \in \R^d 
				\suchthat 
				v^* \in B_r^{d-1}, \; 
				v_d \in [-\ell, \ell] 
		\big\}.
\end{align*}
where $B_r^{d-1}$ denotes the closed ball of radius $r$ around the origin in $\R^{d-1}$. 

Indeed, by 
the construction in \cite{gladbach2018}, 
$M \cup N$ is contained in the cylinder of radius $2[\cT]$, 
whose central axis is obtained by extending the line segment 
between $x_K$ and $x_L$ by a distance $[\cT]$ in both directions, 
for all cells $K,L \in \cT$.
The claim follows using the fact that $K \simh L$.
 
Next we use a simple volume comparison. 
Using $\zeta$-regularity, it follows that
\begin{align}	\label{eq:proof_measureS1}
	\Leb^d\Big( 
				\bigcup_{K \in \cS_1} K 
		  \Big) 
	= 		\sum_{K \in \cS_1} 
				\Leb^d(K) 
	\gtrsim  [\cT]^d (\# \cS_1),
\end{align}
where $\#\cS_1$ denotes the cardinality of $\cS_1$.
Combining \eqref{eq:proof_lemmabounds} and \eqref{eq:proof_measureS1} we infer that
$
	\#\cS_1 \lesssim 1 \vee \frac{|h|}{[\cT]}.
$

To conclude the proof, it remains to show that $\#\cS \lesssim \#\cS_1$. 
To see this, note that for every $K \in \cS_1$, there exists a universally bounded number of cells $L \in \cT$ such that $(K,L) \in \cS$. 
This is due to the fact that if $L, L' \in \cT$ are such that $(K,L), (K,L') \in \cS$, 
we deduce that $d_{L,L'} \lesssim [\cT]$
by the triangle inequality.
The desired result follows from this observation by $\zeta$-regularity.
\end{proof}

The following lemma is the crucial estimate needed to deduce $L^2$-strong compactness of sequences with bounded energy. 
A similar result has been obtained in dimension $d = 2, 3$ in \cite[Lemma~3.3]{eymard2000finite} with bounds in terms of discrete Sobolev norms. 
 
\begin{lemma}[$L^2$-H\"{o}lder continuity]	\label{lem:sobolev_limit}
Assume \eqref{eq:lb}. 
Fix $A \in \cO(\Omega)$ and set $A_\delta := \{ x \in A \suchthat {\rm dist}(x,\partial \Omega) > \delta\}$ for $\delta > 0$.
Let $\cT$ be a $\zeta$-regular mesh, let $f \in L^2(\cT\vert_A)$ and define $\vphi := \Q_\cT f \in L^2(A)$.
For any $h \in \R^d$ we have the $L^2$-bound
\begin{align}	\label{eq:bound_Lipschitz_T}
	\| \tau_h \vphi - \vphi \|_{L^2(A_{|h|})}^2 	\lesssim
		\frac{|h|}{\underline{k}}  
		\Big( |h| \vee [\cT] \Big)
		\cF_\cT(f,A),
\end{align}
where $\tau_h \vphi (\cdot):= \vphi(\cdot - h)$, and $\underline{k} > 0$ is the lower bound in \eqref{eq:lb}. 
\end{lemma}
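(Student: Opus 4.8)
The strategy is to reduce the $L^2$-translation estimate to a telescoping argument along the good paths $\gamma_{KL}$ from Lemma \ref{lemma:paths}, exploiting that the discrete gradient appearing in $\cF_\cT$ controls, after weighting by $U_{KL} \geq \underline{k}$, the sum of squared increments along any such path. First I would fix $h \in \R^d$ and note that, for $x$ in the interior region $A_{|h|}$, the two points $x$ and $x - h$ lie in cells $K = K(x)$ and $L = L(x)$ respectively with $K \simh L$, so that $(\tau_h\vphi - \vphi)(x) = f(L) - f(K)$. Writing this difference as the telescoping sum along $\gamma_{KL}$, namely $f(L) - f(K) = \sum_{i=0}^{n_{KL}-1}\big(f(\gamma_{KL}(i+1)) - f(\gamma_{KL}(i))\big)$, and applying Cauchy--Schwarz with the weights $\frac{d_{KL}}{|\Gamma_{KL}|}$ to bring out the quadratic form of $\cF_\cT$, one gets
\begin{align*}
	|f(L) - f(K)|^2 \lesssim \Big(\sum_{i} \frac{d_{\gamma_i\gamma_{i+1}}}{|\Gamma_{\gamma_i\gamma_{i+1}}|}\Big)\Big(\sum_{i} \frac{|\Gamma_{\gamma_i\gamma_{i+1}}|}{d_{\gamma_i\gamma_{i+1}}}|f(\gamma_{i+1}) - f(\gamma_i)|^2\Big),
\end{align*}
where I abbreviate $\gamma_i := \gamma_{KL}(i)$. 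The first factor is controlled by $\zeta$-regularity: each $d_{\gamma_i\gamma_{i+1}} \lesssim [\cT]$ and $|\Gamma_{\gamma_i\gamma_{i+1}}| \gtrsim [\cT]^{d-1}$, so the factor is $\lesssim [\cT]^{2-d} n_{KL} \lesssim [\cT]^{2-d}\cdot \frac{|h|\vee[\cT]}{[\cT]} = [\cT]^{1-d}(|h|\vee[\cT])$, using the path-length bound \eqref{eq:lemma_paths_distancebound} together with $|x_K - x_L| \lesssim |h| \vee [\cT]$ for $K \simh L$.

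Next I would integrate over $x \in A_{|h|}$. Since $\vphi$ is constant on cells, $\int_{A_{|h|}} |f(L(x)) - f(K(x))|^2 \dd x$ becomes a sum over pairs $(K,L)$ with $K \simh L$, each weighted by $\Leb^d(K) \lesssim [\cT]^d$. Inserting the pointwise bound and interchanging the order of summation, the contribution of a fixed edge $\{M,N\}$ (with $M \sim N$) to the double sum is the squared increment $\frac{|\Gamma_{MN}|}{d_{MN}}|f(M) - f(N)|^2$ times the number of pairs $(K,L)$ with $K \simh L$ whose path $\gamma_{KL}$ traverses $\{M,N\}$; by \eqref{eq:lemma_paths_countingcouples} this multiplicity is $\lesssim 1 \vee \frac{|h|}{[\cT]}$. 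Collecting the constants:
\begin{align*}
	\|\tau_h\vphi - \vphi\|_{L^2(A_{|h|})}^2 &\lesssim [\cT]^d \cdot [\cT]^{1-d}(|h|\vee[\cT]) \cdot \Big(1\vee\tfrac{|h|}{[\cT]}\Big) \cdot \frac{1}{\underline{k}}\cdot 4\,\cF_\cT(f,A)\\
	&\lesssim \frac{1}{\underline{k}}\,[\cT]\,(|h|\vee[\cT])\,\frac{|h|\vee[\cT]}{[\cT]}\,\cF_\cT(f,A) = \frac{(|h|\vee[\cT])^2}{\underline{k}}\,\cF_\cT(f,A),
\end{align*}
where the factor $\frac{1}{\underline{k}}$ comes from replacing $U_{MN}$ by its lower bound $\underline{k}$ in $\cF_\cT$, i.e. $\frac{|\Gamma_{MN}|}{d_{MN}}|f(M)-f(N)|^2 \leq \frac{1}{\underline{k}}U_{MN}\frac{|\Gamma_{MN}|}{d_{MN}}|f(M)-f(N)|^2$. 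Finally, to match the asserted bound \eqref{eq:bound_Lipschitz_T}, which has $|h|(|h|\vee[\cT])$ rather than $(|h|\vee[\cT])^2$, I would treat the two regimes separately: when $|h| \geq [\cT]$ the two expressions agree up to a constant; when $|h| < [\cT]$, the translate $\tau_h$ moves points by less than one mesh size, so $K(x)$ and $L(x)$ are either equal or neighbours and the multiplicity count is bounded, yielding directly $\|\tau_h\vphi - \vphi\|_{L^2(A_{|h|})}^2 \lesssim |h|[\cT]\underline{k}^{-1}\cF_\cT(f,A)$ (one loses one power of $|h|/[\cT]$ because the set of $x$ for which $K(x) \neq L(x)$ has measure $\lesssim |h|\cdot[\cT]^{d-1}$ per relevant edge). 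Combining both regimes gives the stated $|h|(|h|\vee[\cT])$.

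The main obstacle I anticipate is the careful bookkeeping in the double-counting step: one must verify that each pointwise increment $|f(L(x)) - f(K(x))|^2$, after being bounded by a path sum, contributes to each traversed edge $\{M,N\}$ exactly once, so that summing over $x$-cells and then over paths produces the edge-multiplicity bound \eqref{eq:lemma_paths_countingcouples} cleanly without hidden overcounting in the direction of $h$; the cylinder argument underlying that lemma is what makes the multiplicity uniform, and one has to make sure the geometric hypotheses ($K \simh L$, $\zeta$-regularity) are invoked in exactly the form in which they were established. A secondary subtlety is the small-$|h|$ regime, where one should not blindly apply the path machinery (paths may be trivial) but instead argue directly via the measure of the ``boundary layer'' between $K(x)$ and $L(x)$; this is where the extra factor $|h|/[\cT]$ is recovered.
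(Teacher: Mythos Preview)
Your approach is essentially the paper's: decompose the $L^2$-norm as $\sum_{K,L}|C_{KL}|\big(f(L)-f(K)\big)^2$ with $C_{KL}=K\cap(L+h)$, telescope along the paths of Lemma~\ref{lemma:paths} via Cauchy--Schwarz, swap sums, and invoke the multiplicity bound \eqref{eq:lemma_paths_countingcouples}. The one place where the paper is cleaner is the volume estimate for $C_{KL}$: rather than bounding it crudely by $[\cT]^d$ and then repairing the small-$|h|$ regime by a separate argument, the paper proves $|C_{KL}|\lesssim[\cT]^{d-1}\big(|h|\wedge[\cT]\big)$ uniformly in $h$ by choosing a hyperplane $H$ separating the convex cells $K$ and $L$ (Hahn--Banach) and observing that $C_{KL}$ lies in the slab between $H$ and $H+h$ intersected with a ball of radius $\lesssim[\cT]$. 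This is exactly your ``boundary layer'' observation, just applied for all $h$; combined with $n_{KL}\lesssim(|h|\vee[\cT])/[\cT]$ it yields $|C_{KL}|\,n_{KL}\lesssim[\cT]^{d-1}|h|$ in one stroke, and the factor $|h|\,(|h|\vee[\cT])$ falls out without a case split. A minor imprecision in your small-$|h|$ discussion: the cells $K(x)$ and $L(x)$ need not be face-neighbours even when $|h|<[\cT]$ (they may meet only along a lower-dimensional face, or not at all), but the path length is still $O(1)$ by \eqref{eq:lemma_paths_distancebound}, so your conclusion stands. Your weighted Cauchy--Schwarz is equivalent to the paper's unweighted version by $\zeta$-regularity, so that difference is purely stylistic.
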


\begin{proof}
For any $h \in \R^d$ we have 
\begin{align}
\label{eq:L2-est}
	\| \tau_h \vphi - \vphi \|_{L^2(A_{|h|})}^2 
	& = \int_{A_{|h|}} 
		\big( \vphi(x-h) - \vphi(x) \big)^2 
		\dd x
       \leq \sum_{K,L \in \cT \vert_A} 
   			| C_{KL} | \big( f(L) - f(K) \big)^2 
		,
\end{align}
where $C_{KL} = \{x \in K : x - h \in L \}$. 
For $K, L \in \cT \vert_A$ we use Lemma \ref{lemma:paths} and the Cauchy-Schwarz inequality to write
\begin{align}	\label{eq:bound_long_interactions}
	\big( f_N(K) - f_N(L) \big)^2 
		\leq n_{KL} \sum_{i=1}^{n_{KL}} 
			\big( f_N(K_{i-1}) - f_N(K_i)\big)^2,
\end{align}
where $K = K_0 \sim K_1 \sim \ldots \sim K_{n_{KL}} = L$, and $n_{KL} \lesssim \frac{d_{KL}}{[\cT]}$.
Observe that $d_{KL} \lesssim [\cT] \vee |h|$ whenever $C_{KL} \neq \emptyset$.

To estimate the measure of $C_{KL}$, we pick a hyperplane $H$ that separates $K$ and $L$ (which exists by the Hahn-Banach theorem, in view of the convexity of the cells). 
By construction, $C_{KL}$ is contained in the strip between $H$ and $H + h$. 
Moreover, we have $C_{KL} \subseteq K$, which means that $C_{KL}$ is contained in a ball of radius $\lesssim [\cT]$. 
Combining these two facts, we infer that $|C_{KL}| \lesssim [\cT]^{d-1} |h|$, hence $|C_{KL}| \lesssim [\cT]^{d-1}\big(|h| \wedge [\cT]\big)$ by $\zeta$-regularity.

Putting these estimates together, we obtain 
\begin{align}
\label{eq:S-bound}
	|C_{KL}| \big( f(K) - f(L) \big)^2 
	\lesssim  [\cT]^{d-1} |h|
		\sum_{i=1}^{n_{KL}} 
			\big( f(K_{i-1}) - f(K_i) \big)^2.
\end{align}
Let $\alpha_{KL}$ denote the left-hand side in 
\eqref{eq:lemma_paths_countingcouples}. 
Using \eqref{eq:L2-est} and \eqref{eq:S-bound} we find that
\begin{align*}
\| \tau_h \vphi - \vphi \|_{L^2(A_{|h|})}^2 
&	\lesssim  [\cT]^{d-1} |h|
		\sum_{ 
			\substack{ 
				K,L \in \cT \vert_A \\ 
				L \sim K
			}
		} 
 \alpha_{KL} \big( f(L) - f(K) \big)^2.
\end{align*}

On the other hand, in view of the $\zeta$-regularity and the assumption \eqref{eq:lb}, we have
\begin{align*}
	\cF_\cT(f,A) \gtrsim 
	\underline k[\cT]^{d-2}
	\sum_{ 
		\substack{ 
			K,L \in \cT \vert_A \\ 
			L \sim K
		}
	}
			\big( f(K) - f(L) \big)^2.
\end{align*}
The desired result follows, since $\alpha_{KL} \leq 1 \vee \frac{|h|}{[\cT]}$ by Lemma \ref{lemma:paths}.
\end{proof}

The compactness result now follows easily.

\begin{proposition}[Compactness]	\label{prop:sobolev_limit}
Fix $A \in \cO(\Omega)$ and assume that the lower bound \eqref{eq:lb} holds.
Let $(\cT_N)_N$ be a vanishing sequence of $\zeta$-regular meshes. Let $f_N \in L^2(\cT_N\vert_A)$ be such that  
\begin{align*}
	\alpha:= \sup_{N \in \N} \cF_N(f_N,A)  < +\infty,
\end{align*}
and define $\vphi_N := \Q_N f_N \in L^2(A)$.
Then the sequence $(\vphi_N)_N$ is relatively compact in $L^2(A)$. Moreover, any subsequential limit $\vphi$ belongs to $H^1(A)$ and satisfies
\begin{align*}
  \| \nabla \vphi \|_{L^2(A)} \lesssim \sqrt{\frac{\alpha}{k}}.
\end{align*} 
\end{proposition}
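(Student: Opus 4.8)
The plan is to deduce the compactness statement directly from the $L^2$-Hölder continuity estimate of Lemma~\ref{lem:sobolev_limit} via the Fréchet--Kolmogorov (Riesz--Kolmogorov) criterion for relative compactness in $L^2$. First I would extend each $\vphi_N=\Q_N f_N$ by zero outside $A$ (or rather work on the interior subdomains $A_\delta$), noting that the energy bound $\sup_N\cF_N(f_N,A)\le\alpha$ together with $\zeta$-regularity and \eqref{eq:lb} gives, via \eqref{eq:bound_Lipschitz_T}, the uniform translation estimate
\begin{align*}
	\|\tau_h\vphi_N-\vphi_N\|_{L^2(A_{|h|})}^2 \lesssim \frac{\alpha}{\underline k}\,|h|\,\big(|h|\vee[\cT_N]\big).
\end{align*}
Since $[\cT_N]\to0$, for any fixed $h$ the factor $|h|\vee[\cT_N]$ is eventually just $|h|$, so $\limsup_N\|\tau_h\vphi_N-\vphi_N\|_{L^2(A_{|h|})}^2\lesssim \frac{\alpha}{\underline k}|h|^2$, which tends to $0$ as $h\to0$, uniformly in $N$ in the relevant sense. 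Combined with the uniform $L^2(A)$-bound on $(\vphi_N)_N$ (each $\mu_N$ is a probability measure with density bounded below, but more simply $\|f_N\|_{L^2}$ is controlled—or one argues on $A_\delta$ and passes $\delta\to0$ by a diagonal argument), the Fréchet--Kolmogorov theorem yields relative compactness of $(\vphi_N)_N$ in $L^2(A_\delta)$ for each $\delta>0$, hence in $L^2(A)$ after a diagonal extraction.

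Next I would identify the limit. Let $\vphi_{N_k}\to\vphi$ in $L^2(A)$. Passing to the limit in the translation estimate along the subsequence gives
\begin{align*}
	\|\tau_h\vphi-\vphi\|_{L^2(A')}^2 \lesssim \frac{\alpha}{\underline k}\,|h|^2
\end{align*}
for every $A'\Subset A$ and $|h|$ small. The standard characterisation of $H^1$ via difference quotients (a function whose $L^2$-difference quotients are bounded lies in $H^1$ locally, with $\|\nabla\vphi\|_{L^2(A')}$ controlled by the difference-quotient constant) then shows $\vphi\in H^1_{\loc}(A)$ with $\|\nabla\vphi\|_{L^2(A')}\lesssim\sqrt{\alpha/\underline k}$. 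Letting $A'\uparrow A$ via monotone convergence upgrades this to $\vphi\in H^1(A)$ with the claimed bound $\|\nabla\vphi\|_{L^2(A)}\lesssim\sqrt{\alpha/\underline k}$ (the statement writes $k$ for $\underline k$).

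The main obstacle I anticipate is the bookkeeping near $\partial A$: Lemma~\ref{lem:sobolev_limit} only controls translations on the shrunken domain $A_{|h|}$, and $\cF_N$ in the hypothesis is the energy over $\cT_N|_A$, so one must be careful that the constants in the translation bound do not blow up as the inner domain approaches $A$, and that the diagonal/monotone-convergence arguments to pass from $A_\delta$ to $A$ are legitimate. A second minor technical point is justifying the uniform $L^2$-bound needed for Fréchet--Kolmogorov; here one can use that $\vphi_N$ arises from a probability measure together with the lower bound $\underline k$ (so the density is pinned from below, and the $L^2$ norm is controlled once one also has the Hölder/energy bound bounding oscillations), or simply observe that bounded energy plus one anchored value suffices on each $A_\delta$. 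Everything else is a routine application of the Riesz--Kolmogorov criterion and the difference-quotient characterisation of Sobolev spaces.
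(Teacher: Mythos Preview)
Your approach is essentially identical to the paper's: apply the Kolmogorov--Riesz--Fr\'echet criterion using the translation estimate \eqref{eq:bound_Lipschitz_T}, then pass to the limit in that same estimate and invoke the difference-quotient characterisation of $H^1$. The paper's proof is terser---it does not spell out the $A_\delta \uparrow A$ exhaustion or the uniform $L^2$-bound you flag (indeed, the proposition as stated lacks an explicit $L^2$-hypothesis, and in applications this bound comes for free, e.g.\ $\|\sqrt{\rho_N}\|_{L^2}^2 = 1$)---but the substance is the same.
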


\begin{proof}
The $L^2$-compactness follows from \eqref{eq:bound_Lipschitz_T} in view of the Kolmogorov-Riesz-Frech\'{e}t theorem \cite[Theorem 4.26]{brezis2010functional}. 
Let $\vphi$ be any subsequential limit point of $\vphi_N$ as $[\cT_N] \rightarrow 0$. 
Another application of \eqref{eq:bound_Lipschitz_T} yields,  for any $h \in \R^d$ and $\delta > 0$,
\begin{align*}
	\| \tau_h \vphi - \vphi \|_{L^2(A_\delta)}^2 
		& = \lim_{N \to \infty}
		 \| \tau_h \vphi_N - \vphi_N\|_{L^2(A_\delta)}^2 
	\lesssim \frac{\alpha}{k} |h|^2,
\end{align*} 
which implies that $\vphi \in H^1(A)$ by the characterisation of $H^1(A)$ as the space of functions which are Lipschitz continuous in $L^2$-norm (see, e.g., \cite[Proposition 9.3]{brezis2010functional}).
\end{proof}

\begin{proof}[Proof of Theorem \ref{thm:Sob-finite}]
Proposition \ref{prop:sobolev_limit} shows that $\vphi \in H^1(\Omega)$ whenever $\bF(\vphi) < \infty$. 
It also follows from Proposition \ref{prop:sobolev_limit} that every $L^2$-weakly convergent sequence $\vphi_N = \Q_N f_N$ with bounded energy $\sup_N \cF_N(f_N,A) < +\infty$ converges strongly in $L^2$. 
Therefore, Mosco and $\Gamma$-convergence are equivalent in this situation.
\end{proof}

\subsection{Sobolev bound and inner regularity}	\label{section:sobolev}

This part focuses on a Sobolev upper bound for subsequential $\Gamma$-limit functionals, which will be useful in Proposition \ref{prop:inner_regularity} and in Theorem \ref{thm:representation} below.

\begin{proposition}[Sobolev upper bound] \label{prop:sobolev_bound}
Assume \eqref{eq:ub} and let $A \in \cO(\Omega)$.
For any subsequential $\Gamma$-limit  $\bF(\cdot, A)$
of the sequence $\big( \tbF_N(\cdot,A) \big)_N$ 
in the $L^2(\Omega)$-topology,
we have the Sobolev upper bound
\begin{align}	\label{eq:sobolev_bound_prop}
	\bF(\vphi,A) 
		\lesssim 
	\bar{k} \int_A |\nabla \vphi |^2 \dd x
\end{align}
for any $\vphi \in H^1(\Omega)$.
\end{proposition}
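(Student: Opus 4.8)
The plan is to establish the Sobolev upper bound \eqref{eq:sobolev_bound_prop} by constructing an explicit recovery sequence. Since $\bF(\cdot,A)$ is a $\Gamma$-limit, it suffices to exhibit, for each $\vphi \in H^1(\Omega)$, a sequence $\vphi_N = \Q_N f_N$ converging to $\vphi$ in $L^2(\Omega)$ with $\limsup_N \tbF_N(\vphi_N, A) \lesssim \bar k \int_A |\nabla \vphi|^2 \dd x$. The most natural choice is $f_N := P_N \vphi$, i.e.\ $f_N(K) = \vphi(x_K)$, so that $\vphi_N = Q_N P_N \vphi$ is the piecewise-constant interpolant; by $\zeta$-regularity and $[\cT_N] \to 0$ this converges to $\vphi$ in $L^2$ (first for $\vphi$ smooth, then for general $\vphi \in H^1$ by density and the uniform bound the estimate itself provides). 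For such $\vphi_N$ we must bound $\tbF_N(\vphi_N, A) = \frac14 \sum_{K,L \in \cT_N|_A} (\vphi(x_K) - \vphi(x_L))^2 U_{KL} \frac{|\Gamma_{KL}|}{d_{KL}}$; using \eqref{eq:ub}, $U_{KL} \leq \bar k \max\{m(K)/|K|, m(L)/|L|\}$... actually more directly, since for the recovery sequence we may first reduce to $\vphi \in C^\infty(\bOmega)$, the bound $U_{KL} \lesssim \bar k$ is not quite what is needed — rather one uses that $U_{KL}$ is comparable to the local density, whose integral against cell volumes is controlled; but the cleanest route exploits $\sum_{L \sim K} |\Gamma_{KL}| |x_K - x_L| \lesssim |K|$ (a standard consequence of $\zeta$-regularity, since the cell $K$ is covered by the ``pyramids'' over its faces).

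Concretely, first treating $\vphi \in C^1(\bOmega)$, write $(\vphi(x_K) - \vphi(x_L))^2 \lesssim |x_K - x_L|^2 \sup_{[x_K,x_L]} |\nabla \vphi|^2$ — or, to get the sharp constant, bound it by an integral of $|\nabla \vphi|^2$ along the segment $[x_K, x_L]$, which (because $x_K - x_L \perp \Gamma_{KL}$ in an admissible mesh) is a transversal to the face $\Gamma_{KL}$. Then
\begin{align*}
\tbF_N(\vphi_N, A)
&\lesssim \bar k \sum_{K \sim L,\ K,L \in \cT_N|_A} |x_K - x_L| \, |\Gamma_{KL}| \, \frac{1}{|x_K - x_L|^2} (\vphi(x_K) - \vphi(x_L))^2 \\
&\lesssim \bar k \sum_{K \sim L} |\Gamma_{KL}| \, |x_K - x_L| \cdot \frac{1}{|x_K-x_L|} \fint_{[x_K,x_L]} |\nabla \vphi|^2 \\
&\lesssim \bar k \int_{A'} |\nabla \vphi|^2 \dd x,
\end{align*}
where $A'$ is a slight enlargement of $A$ (to accommodate cells in $\cT_N|_A$ touching $\partial A$), and the last step is the usual ``pyramid'' decomposition: each segment $[x_K,x_L]$ scaled by its face area reconstructs, up to a bounded overlap factor depending only on $\zeta$ and $d$, the Lebesgue measure restricted to a neighbourhood of the faces, so summing the transversal integrals of $|\nabla\vphi|^2$ is comparable to $\int |\nabla\vphi|^2$. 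Since $[\cT_N] \to 0$, the enlargement $A' \setminus A$ shrinks and one recovers $\int_A |\nabla\vphi|^2$ in the limit; combined with $L^2$-convergence $\vphi_N \to \vphi$, the $\Gamma$-liminf (equivalently, the definition of recovery sequence) gives $\bF(\vphi, A) \lesssim \bar k \int_A |\nabla \vphi|^2$ for $\vphi \in C^1(\bOmega)$.

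Finally, to pass from $C^1(\bOmega)$ to general $\vphi \in H^1(\Omega)$, one uses that $\Gamma$-limits are lower semicontinuous in $L^2(\Omega)$ together with the uniform estimate just obtained on the dense subset: choosing $\vphi_j \in C^1(\bOmega)$ with $\vphi_j \to \vphi$ in $H^1(\Omega)$, we have $\bF(\vphi, A) \leq \liminf_j \bF(\vphi_j, A) \lesssim \bar k \liminf_j \int_A |\nabla \vphi_j|^2 = \bar k \int_A |\nabla \vphi|^2$. The main obstacle — and the step deserving the most care — is the geometric ``pyramid'' estimate controlling $\sum_{K \sim L} |\Gamma_{KL}| |x_K - x_L|$ and, more precisely, the bounded-overlap property that lets one sum the one-dimensional transversal integrals $\fint_{[x_K,x_L]} |\nabla\vphi|^2$ into a genuine $d$-dimensional integral; this is exactly where $\zeta$-regularity (inner ball and area bounds) and the orthogonality $x_K - x_L \perp \Gamma_{KL}$ enter, and one must be attentive near $\partial A$ and $\partial \Omega$, shrinking the enlargement as $[\cT_N]\to 0$.
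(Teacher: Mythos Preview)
Your overall strategy coincides with the paper's: take $f_N(K) = \vphi(x_K)$ as recovery sequence, estimate $\tbF_N(Q_N f_N, A)$ for smooth $\vphi$ using $\zeta$-regularity (in particular $\sum_{L \sim K} d_{KL}|\Gamma_{KL}| \lesssim |K|$), then extend to $H^1(\Omega)$ by density and $L^2$-lower semicontinuity of the $\Gamma$-limit.

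The one substantive difference is the intermediate computation. The paper avoids your line-integral route entirely: for $\vphi \in C_c^\infty(\R^d)$ it simply uses that $\big(\frac{f_N(K)-f_N(L)}{d_{KL}}\big)^2 = (\nabla\vphi(x_K)\cdot \nu_{KL})^2 + o(1)$ uniformly, bounds $U_{KL} \lesssim \bar k$ via \eqref{eq:ub}, and then recognises $\sum_K |\nabla\vphi(x_K)|^2 |K|$ as a Riemann sum for $\int_A |\nabla\vphi|^2$. This is shorter and sidesteps the ``bounded overlap'' claim you invoke, which as stated is not quite right: the measure $\sum_{K\sim L} |\Gamma_{KL}|\, \mathcal{H}^1\!\res_{[x_K,x_L]}$ is singular with respect to Lebesgue, so $|\Gamma_{KL}|\int_{[x_K,x_L]}|\nabla\vphi|^2$ cannot be bounded by a volume integral for general $L^1$ integrands --- you need continuity of $|\nabla\vphi|^2$ to pass from the segment to a nearby tube or cell, at which point you are effectively doing the paper's Riemann-sum argument anyway. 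So your approach works, but the ``pyramid / bounded overlap'' step deserves either a precise formulation (e.g.\ averaging over translates of the segment across the diamond $\mathrm{conv}(\Gamma_{KL},x_K,x_L)$) or replacement by the simpler pointwise estimate the paper uses.
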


Here and in the proof, the implied constants depend only on $\Omega$ and the regularity parameter $\zeta$.

\begin{proof}
Let us first prove \eqref{eq:sobolev_bound_prop} for $\vphi \in C_c^{\infty}(\R^d)$. 
For $N \in \N$, define $f_N : \cT_N \to \R$ by $K \in \cT_N$, define
\begin{equation*}
	f_N(K):= \vphi(x_K) \quad\text{for } K \in \cT_N.
\end{equation*}
Write $\nu_{KL} := \frac{x_K - x_L}{d_{KL}}$. 
By smoothness of $\vphi$ and $\sigma$, we have
\begin{equation*}	
\eps_N := 
	\sup_{K, L \in \cT_N} 
		\bigg\vert	
			\bigg( 
				\frac{ f_N(K) - f_N(L) }{ d_{KL} } 
			\bigg)^2 
			 -  
			\big(
				\nabla \varphi(x_K) \cdot \nu_{KL} 
			\big)^2 
		\bigg\vert \to 0. 
\end{equation*} 
Using this estimate, assumption \eqref{eq:ub}, and the $\zeta$-regularity, we obtain 
\begin{align*}
	\tbF_N(Q_N f_N,A) 
	& =
		\frac14
			\sum_{K,L \in \cT_N\vert_A}  
				\bigg( \frac{ f_N(K) - f_N(L) }{d_{KL}} \bigg)^2  	
				U_{KL}
				d_{KL} |\Gamma_{KL}|  
	\\& 
	\lesssim    
	 	\overline{k}
	 		\sum_{K \in \cT_N\vert_A}  
			 	\Big(
					 \big(
						\nabla \varphi(x_K) \cdot \nu_{KL} 
					 \big)^2
					+ \eps_N	
				\Big)
					\bigg( 
						\sum_{L: L \sim K} d_{KL} |\Gamma_{KL}
					\bigg)
	\\& 
	\lesssim  
	 	\overline{k}	 
	 		\sum_{K \in \cT_N\vert_A}  
				\Big(
					| \nabla \vphi(x_K) |^2  	
				+ \eps_N	
				\Big)	
					|K|.					
  \end{align*}
The smoothness of the function $|\nabla \vphi|^2$ and the identity $\sum_{K \in \cT_N} |K| = |\Omega|$ now yield
\begin{align*}
	\limsup_{N \to \infty} 
	\tbF_N(Q_N f_N,A) 
\lesssim 
\bar{k} \int_A 
			|\nabla \vphi|^2 
		\dd x.
\end{align*}
Since $Q_N f_N$ converges to $\vphi$ in $L^2(A)$, the  $\Gamma$-convergence of $\tbF_N(\cdot, A)$ to $\bF(\cdot, A)$ yields the desired bound \eqref{eq:sobolev_bound_prop}.

It remains to extend the result to $H^1(\Omega)$ by a density argument.
Indeed, for any $\vphi \in H^1(\Omega)$ 
there exists a sequence 
$(\vphi_i)_i \subseteq C_c^{\infty}(\R^d)$ 
such that 
$\vphi^i \to \vphi$ in $H^1(\Omega)$. 
As $\bF(\cdot,A)$ is lower semicontinuous in $L^2(\Omega)$, 
we can apply \eqref{eq:sobolev_bound_prop} to $\vphi_i$ to obtain
\begin{align*}
	\bF(\vphi,A) 
		\leq \liminf_{i \to \infty}
				 \bF(\vphi_i,A) 
		\lesssim \bar{k}\liminf_{i \to \infty} 
				\int_{A} |\nabla \vphi_i |^2 \dd x
		= \bar{k} \int_{A} |\nabla \vphi |^2 \dd x,
\end{align*}
which shows \eqref{eq:sobolev_bound_prop} for $\vphi \in H^1(\Omega)$.
\end{proof}

\begin{remark}	\label{rem:limsup}
In the case where $m_N = \mathsf{P}_N (\rho \dd x)$ for a continuous density $\rho$, 
it is possible to prove the sharp upper bound $\bF \leq \bF_\mu$ by a similar argument with a bit more effort. 
However, we are not aware of a simple argument for the corresponding liminf inequality. 
Therefore, we pass through the compactness and representation scheme, which yields the sharp upper bound as a byproduct.
\end{remark}

We now focus on the inner regularity of subsequential $\Gamma$-limit functionals. 
We will prove something slightly stronger than the classical inner regularity, 
namely, an inner approximation result with sets of Lebesgue measure $0$.
This sharpening will be useful in the proof of the locality in Proposition \ref{prop:locality} below. 

For any $A,B \subset \Omega$, 
we write $A \ssubset B$ 
as a shorthand for $A$ being relatively compact in $B$.

\begin{proposition}[Inner regularity] \label{prop:inner_regularity}
Assume \eqref{eq:ub}. 
For $A \in \cO(\Omega)$, 
let $\bF(\cdot, A)$ be a subsequential $\Gamma$-limit  
of the sequence $\big( \tbF_N(\cdot,A) \big)_N$
in the $L^2(\Omega)$-topology.
Then the function $A \mapsto \bF(\vphi,A)$ 
is inner regular on $\cO(\Omega)$, 
i.e.,
\begin{align}
	\label{eq:inner-regularity}
	\sup_{ 
			\substack{ 
				A' \Subset A \\ 
				\Leb^d(\partial A') =0
			}
		}
		\bF(\vphi,A')
	 = \sup_{A' \Subset A} \bF(\vphi,A') 
	 = \bF(\vphi,A).
\end{align}
for any $\vphi \in H^1(\Omega)$ and $A \in \cO(\Omega)$.
\end{proposition}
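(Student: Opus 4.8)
The plan is to follow the classical De Giorgi--Letta strategy for proving that a set function arising as a $\Gamma$-limit is inner regular, combined with the Sobolev upper bound of Proposition \ref{prop:sobolev_bound} to control the ``annular'' contributions. First I would note that $A \mapsto \bF(\vphi, A)$ is (super)additive and monotone in $A$ (monotonicity is immediate from the definition of $\tbF_N(\cdot, A)$, since enlarging $A$ only enlarges the index set $\cT_N\vert_A$, and superadditivity on disjoint open sets follows because the edge sets $\cT_N\vert_{A_1}$ and $\cT_N\vert_{A_2}$ are essentially disjoint up to a boundary layer of vanishing measure). The middle equality in \eqref{eq:inner-regularity} --- replacing the supremum over $A' \Subset A$ with the supremum over $A' \Subset A$ having $\Leb^d(\partial A') = 0$ --- is then easy: given any $A' \Subset A$ we can find $A'' $ with $A' \Subset A'' \Subset A$ and $\Leb^d(\partial A'')=0$ (e.g.\ a finite union of cubes, or a sublevel set $\{\dist(\cdot,\partial A) > t\}$ for a.e.\ $t$), and monotonicity gives $\bF(\vphi, A') \le \bF(\vphi, A'')$, so the two suprema coincide.

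The main work is the second equality, $\sup_{A' \Subset A} \bF(\vphi, A') = \bF(\vphi, A)$; the inequality ``$\le$'' is monotonicity, so the content is ``$\ge$''. I would fix $\vphi \in H^1(\Omega)$ with $\bF(\vphi, A) < \infty$ (otherwise there is nothing to prove, by monotonicity and the fact that $\bF(\vphi, A) \le \bF_\mu$-type bounds), fix $\eps > 0$, and choose open sets $A' \Subset A'' \Subset A$ together with a cut-off $\vartheta \in C_c^\infty(A'')$ with $0 \le \vartheta \le 1$, $\vartheta \equiv 1$ on $A'$. For each $N$ let $\vphi_N \to \vphi$ in $L^2$ be a recovery sequence for $\tbF_N(\cdot, A)$, i.e.\ $\vphi_N \in \PCN$ with $\limsup_N \tbF_N(\vphi_N, A) = \bF(\vphi, A)$; also let $\psi_N \to \vphi$ be a recovery sequence for $\tbF_N(\cdot, A'' \setminus \overline{A'})$ (or simply take $\psi_N := P_N^{-1}$-embeddings of a fixed smooth approximation of $\vphi$, using Proposition \ref{prop:sobolev_bound} to bound its energy). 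Then interpolate: define $\zeta_N := Q_N\big( (P_N \vartheta) P_N \vphi_N + (1 - P_N \vartheta) P_N \psi_N \big)$, a discrete convex combination cell-by-cell. One checks $\zeta_N \to \vphi$ in $L^2(\Omega)$ and $\zeta_N = \vphi_N$ on cells well inside $A'$ and $\zeta_N = \psi_N$ on cells outside $A''$.

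The key estimate is then to bound $\tbF_N(\zeta_N, A)$: on $A'$ it is at most $\tbF_N(\vphi_N, A)$; outside $A''$ it is at most $\tbF_N(\psi_N, A)$ which is controlled by $\bar k \int_{A \setminus \overline{A''}} |\nabla \vphi|^2$ via Proposition \ref{prop:sobolev_bound}; and in the ``transition annulus'' $A'' \setminus \overline{A'}$ one uses the discrete Leibniz-type identity $(ab + (1-a)b')(K) - (ab+(1-a)b')(L)$ expanded via the bound $(f g)(K) - (fg)(L) \lesssim |f(K)||g(K) - g(L)| + |g(L)||f(K)-f(L)|$ applied to $f = $ the cutoff and $g = \vphi_N$ or $\psi_N$: this produces terms of the form $\|\vartheta\|_\infty^2 \big(\tbF_N(\vphi_N, A''\setminus\overline{A'}) + \tbF_N(\psi_N, A''\setminus\overline{A'})\big)$ plus terms with $\|\nabla\vartheta\|_\infty^2 \int |\vphi_N - \psi_N|^2$, the latter vanishing as $N \to \infty$ since both sequences converge to $\vphi$ in $L^2$. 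Passing to the $\limsup$ in $N$, using $\Gamma$-convergence for the lower bound on the left, superadditivity to split $\bF(\vphi, A) = \bF(\vphi, A') + \bF(\vphi, A'' \setminus \overline{A'}) + \dots$ is not quite available, so instead I would bound directly: $\bF(\vphi, A') \le \liminf_N \tbF_N(\zeta_N, A') \le \liminf_N \tbF_N(\vphi_N, A') \le \bF(\vphi, A)$ --- wait, this is circular. The correct move is the reverse: use the recovery sequence to get $\limsup_N \tbF_N(\zeta_N, A) \le \bF(\vphi, A) + C\bar k \int_{A \setminus \overline{A'}} |\nabla\vphi|^2 + o(1)$, and since $\zeta_N \to \vphi$ while $\zeta_N$ is a competitor showing $\bF(\vphi, A) \le \liminf_N \tbF_N(\zeta_N, A)$... the point is rather to let $A'$ exhaust $A$: choosing a sequence $A'_j \uparrow A$ with $\int_{A \setminus \overline{A'_j}} |\nabla\vphi|^2 \to 0$, one obtains $\bF(\vphi, A) \le \liminf_j \bF(\vphi, A'_j) + 0 \le \sup_{A' \Subset A} \bF(\vphi, A')$, using that $\bF(\vphi, A'_j) \ge \liminf_N \tbF_N(\zeta_N^{(j)}, A'_j) = \liminf_N \tbF_N(\vphi_N, A'_j)$ and carefully tracking which functional each quantity refers to.

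\textbf{Expected main obstacle.} The delicate point is the bookkeeping in the interpolation/cut-off argument: one must glue a recovery sequence for $\tbF_N(\cdot, A)$ with a ``cheap'' sequence on the annulus in such a way that (a) the glued sequence still converges to $\vphi$ in $L^2$, (b) its energy on $A$ exceeds $\bF(\vphi, A)$ by no more than an arbitrarily small amount, and (c) it coincides with the recovery sequence on a large inner subdomain $A'$ so that $\bF(\vphi, A') \ge \bF(\vphi, A) - \eps$ can be read off. The discrete (rather than continuous) nature of the cut-off --- it must be a function on $\cT_N$, and products of discrete functions do not obey an exact Leibniz rule --- means the transition-annulus estimate requires the elementary inequality $(f(K)g(K) - f(L)g(L))^2 \lesssim f(K)^2 (g(K)-g(L))^2 + g(L)^2 (f(K) - f(L))^2$ and the observation that $(P_N\vartheta)(K) - (P_N\vartheta)(L) \lesssim \|\nabla\vartheta\|_\infty d_{KL}$ for neighbouring cells, which combines with $\zeta$-regularity and \eqref{eq:ub} exactly as in the proof of Proposition \ref{prop:sobolev_bound}. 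Handling this cleanly, and ensuring the annular energy $\to 0$ as $A' \uparrow A$ by absolute continuity of $\int |\nabla\vphi|^2 \dd x$, is the crux; the De Giorgi--Letta measure-property packaging around it is routine.
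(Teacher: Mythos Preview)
Your overall strategy---glue a recovery sequence for the inner set $A'$ to a cheap sequence on the exterior via a discrete cut-off, then use the $\Gamma$-liminf on the full set $A$---is correct, and your identification of the discrete Leibniz estimate as the technical ingredient in the transition annulus is accurate. However, there is a genuine gap: with a \emph{single} cut-off $\vartheta$, the argument does not close.

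The issue is the following. Suppose (as in the paper, and as you eventually orient yourself) you take $A'' \Subset A' \Subset A$, a recovery sequence $f_N$ for $\bF(\vphi, A')$, a cheap sequence $e_N$ on $A \setminus \overline{A''}$, and a cut-off $\rho$ with $\rho \equiv 1$ on $A''$ and $\supp \rho \Subset A'$. Your Leibniz-type estimate in the transition annulus $A' \setminus \overline{A''}$ produces, among other terms, a contribution of the form
\[
	\cF_N(f_N, A' \setminus \overline{A''}) \, .
\]
But $f_N$ is only known to satisfy $\limsup_N \cF_N(f_N, A') \leq \bF(\vphi, A')$; you have no control on how much of this energy sits in the annulus versus the core. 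A priori, the annulus could carry \emph{all} of the energy, so this term is merely $O(\bF(\vphi, A'))$, not $o(1)$. The upshot is an inequality like $\bF(\vphi, A) \leq (1+C)\bF(\vphi, A') + C\bar k\delta$, which is useless.

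The missing idea is the De Giorgi \emph{averaging trick}: instead of one cut-off, use $M$ nested cut-offs $\rho_1, \ldots, \rho_M$ whose transition annuli $A_{5(i+1)} \setminus \overline{A_{5i}}$ are essentially disjoint and all contained in $A' \setminus \overline{A''}$. Summing the transition contributions over $i$ gives
\[
	\sum_{i=1}^M \cF_N\big(f_N, A_{5(i+1)} \setminus \overline{A_{5i}}\big)
		\leq 2\, \cF_N(f_N, A') ,
\]
since each edge is counted at most twice. Hence for some $i_N$ the transition term is at most $\frac{2}{M}\cF_N(f_N, A')$. Choosing this optimal layer yields a glued sequence $\vphi_N^{i_N} \to \vphi$ with
\[
	\bF(\vphi, A)
	\leq \liminf_{N\to\infty} \cF_N(f_N^{i_N}, A)
	\leq \bF(\vphi, A') + \frac{C}{M}\Big(\bF(\vphi, A') + \bar k \delta\Big) + C\bar k \delta ,
\]
and sending $M \to \infty$ then $\delta \to 0$ finishes the proof. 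Your ``expected main obstacle'' paragraph correctly flags the transition annulus as the crux but does not contain this averaging mechanism, without which the argument cannot be completed. (A minor point: you should not invoke superadditivity of $\bF(\vphi,\cdot)$, since in the paper's logical order that is established \emph{after} inner regularity, using inner regularity in its proof.)
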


\begin{proof}
Fix $\vphi \in H^1(\Omega)$	and $A \in \cO(\Omega)$.
It immediately follows from the definitions that \eqref{eq:inner-regularity} holds with ``$\leq$'' (twice) instead of ``$=$''. 
It thus suffices to prove that
\begin{align*}
	\bF(\vphi,A)
	\leq \sup_{ 
	\substack{ 
		A' \Subset A \\ 
		\Leb^d(\partial A') =0
	}
}
\bF(\vphi,A').
\end{align*}
We adapt the proof for the cartesian grid as given in \cite[Proposition 3.9]{Alicandro2004}. 

Fix $\delta > 0$ 
and consider a non-empty set $A'' \in \cO(\Omega)$ 
such that $A'' \ssubset A$ and
\begin{align*}
	\int_{A \setminus \overline{A''}} 
		|\nabla \vphi|^2 
	\dd x 
	< \delta.
\end{align*} 
Let $\eps_N := \Q_N e_N$ be a recovery sequence for $\bF(\vphi, A \setminus \overline{A''})$, i.e., 
\begin{align}	\label{eq:proof_inner_deltaerr}
	\eps_N \rightarrow \vphi \text{ in } L^2(\Omega)
	\quad \text{and} \quad
	\limsup_{N \to \infty} 
		\cF_N(e_N,A \setminus \overline{A''})
		\leq 
		\bF(\vphi, A \setminus \overline{A''}) 
		\lesssim \bar{k} \delta,
\end{align}
where the last bound is a consequence of Proposition \ref{prop:sobolev_bound}.

Take $A' \in \cO(\Omega)$ such that $A'' \ssubset A' \ssubset A$ and $\Leb^d(\partial A')=0$. 
Note that this can always be done, since one can pick a compact set $K$ satisfying $A'' \subset K \ssubset A$, and then choose $A'$ as the union of any finite open cover of $K$ by balls whose closures are contained in $A$.
Let $\vphi_N := Q_N f_N$ be a recovery sequence for $\bF(\vphi, A')$, so that
\begin{align}
	\label{eq:limsup}
	\vphi_N \rightarrow \vphi \text{ in } L^2(\Omega) 
	\quad \text{and} \quad
	\limsup_{N \to \infty} 
		\cF_N(f_N, A') 
		\leq \bF(\vphi, A').
\end{align}  
Fix $M \in \N$ and suppose that $[\cT_N] < \frac{1}{5(M+1)}$.  
Define $A'' \subset A_1 \subset A_2 \subset \ldots \subset A_{5(M+1)} \subset A'$ by
\begin{align*}
	A_j := 	
		\left\{ 
			x \in A' \suchthat	
				d(x,A'') 
				< 
				\frac{j}{5(M+1)} d\big((A')^c, A''\big)  
		\right\}.
\end{align*} 
Moreover, for $i \in \{1, \ldots, M \}$ we consider a cutoff function $\rho_i \in C^{\infty}(\R^d)$ satisfying 
\begin{align}	\label{eq:cutoff}
	\rho_i|_{A_{5i+2}} = 1, \quad 
	\rho_i|_{\Omega \setminus A_{5i+3}} = 0, \quad 
	0 \leq \rho_i \leq 1, \quad
	|\nabla \rho_i | \lesssim M.
\end{align}
Set $r_N^i(K) := \rho_i(x_K)$ for $K \in \cT_N$,  and define  
\begin{align*}
	f_N^i := r_N^i f_N + (1 - r_N^i) e_N, 
\quad \text{so that }			
	\vphi_N^i := \Q_N f_N^i	\to \vphi
\end{align*}
as $N \to \infty$, uniformly for $i \in \{1, \ldots, M\}$.
As $[\cT_N]<\frac{1}{5(M+1)}$, we have by \eqref{eq:cutoff},
\begin{align}	\label{eq:cutoff_props-alt}
	f_N^i	\equiv f_N	\; 
		\text{on } 
		\cT_N \vert_{A_{5i+1}}, \quad  
	f_N^i	\equiv e_N	\; 
		\text{on } 
		\cT_N \vert_{(A \setminus \overline{A_{5i+4}})}.
\end{align}
Using these identities and the inclusions $A_{5i+1} \subset A'$ 
and $A'' \subset A_{5i+4}$ we obtain    
\begin{align}	\label{eq:proof_inner_upbound-alt}
	\begin{aligned}
		\cF_N(f_N^i,A) 
			& \leq \cF_N(f_N^i,A_{5i+1})
				 + \cF_N(f_N^i,A_{5(i+1)} 
							 \setminus \overline{A_{5i}})
				 + \cF_N(f_N^i,A \setminus \overline{A_{5i+4}})    
		 \\	&\leq \cF_N(f_N,A') 
				 + \cF_N(f_N^i,A_{5(i+1)} 
							 \setminus \overline{A_{5i}})
				 + \cF_N(e_N,A \setminus \overline{A''}).
			\end{aligned}
	\end{align}

To estimate the middle term, let $\nabla g(K,L) := g(L) - g(K)$ denote the discrete derivative and observe that
\begin{align*}
	\nabla f_N^i(K,L) 
	   & = r_N^i(L) \nabla f_N(K,L) 
	   			+ \big( 1 - r_N^i(L) 	\big) \nabla e_N(K,L) 
	\\ & \quad 	+ \big( f_N(K) - e_N(K) \big) \nabla r_N^i(K,L)
\end{align*}
for any $K,L \in \cT_N$.
Consequently, 
\begin{align*}
	| \nabla f_N^i(K,L) |^2 
	&\lesssim |\nabla f_N(K,L)|^2 + |\nabla e_N(K,L) |^2 
	+ M^2 d_{KL}^2 |f_N(K) - e_N(K)|^2.
\end{align*}
Using this bound and the $\zeta$-regularity of the mesh, we obtain
\begin{align*}	
	&  \sum_{i=1}^M  
			\cF_N(f_N^i,A_{5(i+1)} 
			 			\setminus \overline{A_{5i}})
	\\  & \lesssim 
		\sum_{i=1}^M  
			\bigg(
		\cF_N(f_N, A_{5(i+1)} \setminus \overline{A_{5i}}) 
	  + \cF_N(e_N, A_{5(i+1)} \setminus \overline{A_{5i}})
		+ \bar{k} M^2 \| \vphi_N - \eps_N \|_{L^2(\Omega)}^2
			\bigg)
	\\& \leq	
		2 \Big( \cF_N(f_N,A' \setminus \overline{A''}) 
			+ \cF_N(e_N,A' \setminus \overline{A''}) \Big) 
	+ \bar{k} M^3 \| \vphi_N - \eps_N \|_{L^2(\Omega)}^2.
\end{align*}
Taking into account that that $\vphi_N, \eps_N \to \vphi$ in $L^2$, we can pass to the limsup as $N \to \infty$, using \eqref{eq:proof_inner_deltaerr}, \eqref{eq:limsup}, and Proposition \ref{prop:sobolev_bound}, to obtain
\begin{align*}
	\limsup_{N \to \infty}
	\sum_{i=1}^M  
	\cF_N( 	f_N^i,
			A_{5(i+1)} \setminus \overline{A_{5i}}
		)
	& \lesssim 
		\limsup_{N \to \infty}
				\cF_N(f_N, A') 
	+ 
		\limsup_{N \to \infty}
			\cF_N(e_N,A \setminus \overline{A''})
	\\& \lesssim 
			\bF(\vphi, A')
	+  
			\bF(\vphi, A \setminus \overline{A''}) 
		\\& \lesssim
	\overline{k}
	\int_A |\nabla \vphi |^2 \dd x.
\end{align*}
Using this bound and \eqref{eq:proof_inner_deltaerr},  \eqref{eq:limsup} once more, 
it follows from \eqref{eq:proof_inner_upbound-alt} that
\begin{align*}
	\limsup_{N \to \infty}
		\bigg(
			\frac{1}{M} \sum_{i=1}^M 
			\cF_N(f_N^i,A) 
		\bigg)
\leq  \bF(\vphi,A') 
 + C \overline{k} \bigg( 
	 \frac{1}{M}
			\int_A |\nabla \vphi |^2 \dd x
+  \delta
	\bigg).
\end{align*}
where $C < \infty$ depends only on $\Omega$, $\zeta$.

Clearly, for each $N$, there exists $i_N \in \{1, \ldots, M\}$ such that
\begin{align*}
	\cF_N(f_N^{i_N},A) 
	\leq 
		\frac{1}{M} \sum_{i=1}^M 
		\cF_N(f_N^i,A),
\end{align*}
Since $\sup_{1 \leq i \leq M} \| \vphi_N^i - \vphi \|_{L^2(\Omega)} \to 0$ as $N \to \infty$, we have $\vphi_N^{i_N} \to \vphi$ in $L^2(\Omega)$. 
Therefore, using the $\Gamma$-convergence we obtain
\begin{align*}
	\bF(\vphi,A) 
	\leq \liminf_{N \to \infty} 
		\cF_N(f_N^{i_N},A) 
	\leq \bF(\vphi,A') 
		+ C  \overline{k} \bigg( 
		\frac{1}{M}
			   \int_A |\nabla \vphi |^2 \dd x
   +  \delta
	   \bigg).
\end{align*}
As $\delta > 0$  and $M < \infty$ are arbitrary, this is the desired result.
\end{proof}

\begin{proof}[Proof of Theorem \ref{thm:mosco-pre_exist}] 
By Proposition \ref{prop:inner_regularity} and \cite[Theorem 10.3]{Braides1998}, 
there exists a subsequence such that, for any $A \in \cO(\Omega)$, 
the functionals $\big(\tbF_N(\cdot,A)\big)_N$ are $\Gamma$-converging in $L^2(\Omega)$-topology 
to a limit functional $\bF(\cdot,A)$. 
The fact that $\Gamma$-convergence implies Mosco convergence has already been observed in Theorem \ref{thm:Sob-finite}.
\end{proof}

\section{Representation and characterisation of the limit}	\label{section:compactness_representation}

We fix the same setup as in Section \ref{section:moscop}.
We thus consider a vanishing sequence of 
$\zeta$-regular meshes $(\cT_N)_N$
and a sequence of discrete measures $m_N \in \cP(\cT_N)$.

We show the following representation formula for the $\Gamma$-limits from Section \ref{section:moscop}:

\begin{theorem}[Representation of the $\Gamma$-limit] 
\label{thm:mosco-pre_rapp_char}
Assume \eqref{eq:lb} and \eqref{eq:ub}, and 
suppose that, for every $A \in \cO(\Omega)$, the functionals $\big(\tbF_N(\cdot,A)\big)_N$ are $L^2(\Omega)$-Mosco convergent to a functional $\bF(\cdot,A)$. 
Then the functional $\bF$ can be represented as
\begin{align}	\label{eq:rappF0}
	\bF(\vphi,A) = 
	\begin{cases}
		\displaystyle
		\int_A F(x,\vphi, \nabla \vphi) \dd x 
			& \text{for } \vphi \in H^1(\Omega), \\
		+\infty
			& \text{for } \vphi \in L^2(\Omega) \setminus H^1(\Omega),
	\end{cases}
\end{align}
for some measurable function $F: \Omega \times \R  \times \R^d \to [0,+\infty)$.
\end{theorem}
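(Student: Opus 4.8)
The plan is to deduce \eqref{eq:rappF0} from the integral representation theorem \cite[Theorem~2]{Bouchitte2002} (stated as Theorem~\ref{thm:representation} below), applied to the set function $\bF$ viewed as a functional on $H^1(\Omega) \times \cO(\Omega)$. There is nothing to prove off $H^1(\Omega)$, since Theorem~\ref{thm:Sob-finite} already gives $\bF(\vphi, A) = +\infty$ for $\vphi \in L^2(\Omega) \setminus H^1(\Omega)$. On $H^1(\Omega)$ one must verify the hypotheses of that theorem: (a) the growth bound $0 \le \bF(\vphi,A) \le C\int_A (1 + |\nabla\vphi|^2)\dd x$; (b) lower semicontinuity of $\bF(\cdot,A)$; (c) locality, i.e.\ $\bF(\vphi,A) = \bF(\psi,A)$ whenever $\vphi = \psi$ a.e.\ on $A$; and (d) that for fixed $\vphi \in H^1(\Omega)$ the set function $A \mapsto \bF(\vphi,A)$ is the restriction to $\cO(\Omega)$ of a Radon measure. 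Items (a) and (b) are immediate: nonnegativity is built into the definition of $\cF_N(\cdot,A)$, the upper bound is Proposition~\ref{prop:sobolev_bound} (which in fact gives $\bF(\vphi,A) \lesssim \bar{k}\int_A |\nabla\vphi|^2\dd x$), and, being a strong-$L^2(\Omega)$ $\Gamma$-limit of the $\tbF_N(\cdot,A)$, the functional $\bF(\cdot,A)$ is $L^2$-lower semicontinuous; by the compact embedding $H^1(\Omega) \hookrightarrow L^2(\Omega)$ this upgrades to lower semicontinuity along weakly $H^1$-convergent sequences. The substance lies in (c) and (d).

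For (d) I would use the De Giorgi--Letta criterion: a nonnegative, nondecreasing, inner regular set function on $\cO(\Omega)$ that is superadditive on disjoint open sets and subadditive is the restriction of a Borel measure, and by (a) that measure is finite, hence Radon. Monotonicity ($\cF_N(f,A') \le \cF_N(f,A)$ for $A' \subseteq A$) passes to the $\Gamma$-limit, and inner regularity is exactly Proposition~\ref{prop:inner_regularity}. For superadditivity, fix disjoint open sets $A, B$ and $A' \Subset A$, $B' \Subset B$; once $[\cT_N]$ is small enough, no cell of $\cT_N\vert_{A'}$ meets $B'$ or neighbours a cell of $\cT_N\vert_{B'}$, so $\cF_N(f, A'\cup B') = \cF_N(f,A') + \cF_N(f,B')$; applying this to a recovery sequence for $\bF(\vphi, A\cup B)$, using monotonicity and the $\Gamma$-liminf inequality separately on $A'$ and $B'$, and then letting $A' \uparrow A$ and $B' \uparrow B$ via inner regularity, yields $\bF(\vphi,A\cup B) \ge \bF(\vphi,A) + \bF(\vphi,B)$. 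Subadditivity is the fundamental-estimate step: one glues recovery sequences $\Q_N f_N \to \vphi$ for $\bF(\vphi,A)$ and $\Q_N e_N \to \vphi$ for $\bF(\vphi,B)$ across a thin transition layer by a discretised cutoff as in \eqref{eq:cutoff}; averaging over $M$ nested transition layers and selecting the cheapest one, exactly as in the proof of Proposition~\ref{prop:inner_regularity}, the interface energy is controlled by $\lesssim \tfrac1M \bar{k}\int_\Omega |\nabla\vphi|^2 \dd x$ plus an error $\bar{k}\|\nabla\rho\|_\infty^2\|\Q_N f_N - \Q_N e_N\|_{L^2(\Omega)}^2 \to 0$, so that letting $N \to \infty$ and then $M \to \infty$ gives $\bF(\vphi,A\cup B) \le \bF(\vphi,A) + \bF(\vphi,B)$.

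Locality (c) --- which may be isolated as Proposition~\ref{prop:locality} --- is obtained as follows. If $\vphi = \psi$ a.e.\ on $A$ and $A' \Subset A$, then for $N$ large every cell of $\cT_N\vert_{A'}$, together with its neighbours, lies inside $A$; given a recovery sequence for $\bF(\vphi,A')$, one modifies it outside a neighbourhood of $A'$ (interpolating from $\vphi$ to $\psi$ in a collar contained in $A$, where the two functions coincide) to obtain a sequence that still converges in $L^2(\Omega)$, now to $\psi$, with the value of $\cF_N(\cdot,A')$ unchanged. Hence $\bF(\psi,A') \le \bF(\vphi,A')$; by symmetry equality holds, and taking the supremum over $A' \Subset A$ and invoking Proposition~\ref{prop:inner_regularity} gives $\bF(\vphi,A) = \bF(\psi,A)$.

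With (a)--(d) verified, \cite[Theorem~2]{Bouchitte2002} provides a measurable function $F : \Omega \times \R \times \R^d \to [0,+\infty)$ such that $\bF(\vphi,A) = \int_A F(x,\vphi,\nabla\vphi)\dd x$ for every $\vphi \in H^1(\Omega)$ and $A \in \cO(\Omega)$, which, combined with Theorem~\ref{thm:Sob-finite}, is precisely \eqref{eq:rappF0}. I expect the subadditivity half of the measure property (d) to be the main obstacle: one has to glue two discrete recovery sequences without generating spurious nearest-neighbour interaction energy across the interface, a delicate cutoff estimate of the same nature as --- and essentially inheriting its difficulty from --- the inner regularity argument of Proposition~\ref{prop:inner_regularity}.
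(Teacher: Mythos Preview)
Your proposal is correct and follows essentially the same route as the paper: invoke Theorem~\ref{thm:Sob-finite} off $H^1(\Omega)$, then verify the hypotheses of the Bouchitt\'e--Fonseca--Mascarenhas representation theorem (Theorem~\ref{thm:representation}) via locality, the De Giorgi--Letta criterion (inner regularity, subadditivity, additivity on disjoint sets), the Sobolev bound, and lower semicontinuity of $\Gamma$-limits combined with the compact embedding $H^1 \hookrightarrow L^2$. One small omission: hypothesis~\eqref{it:Sobolev} of Theorem~\ref{thm:representation} is a \emph{two-sided} bound $\tfrac{1}{c}\int_A|\nabla\vphi|^2\dd x \le \bF(\vphi,A) \le c\int_A(a+|\nabla\vphi|^2)\dd x$, whereas you only supplied nonnegativity and the upper half; the missing lower half follows from Proposition~\ref{prop:sobolev_limit} under \eqref{eq:lb}.
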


Combined with the following result, this will complete the proof of Theorem \ref{thm:Mosco}.

\begin{theorem}[Characterisation of $F$]	\label{thm:f_quadratic}
Assume \eqref{eq:lb}, \eqref{eq:ub}, and \eqref{eq:pc}.
Then the function $F : \Omega \times \R^d \to [0,+\infty)$ defined in Theorem \ref{thm:mosco-pre_rapp_char} is given by
\begin{align*}
	F(x,u,\xi) = |\xi|^2 \upsilon(x)
	\quad \forall x \in \Omega, \; 
	u \in \R, \; \xi \in \R^d.
\end{align*}
In particular, the sequence $\big(\tbF_N(\cdot,A)\big)_N$ is $L^2(\Omega)$-Mosco convergent to $\bF_\mu(\cdot,A)$.
\end{theorem}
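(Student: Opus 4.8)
The strategy is to combine the representation of Theorem~\ref{thm:mosco-pre_rapp_char} with a localisation/blow-up argument that identifies the integrand $F$ pointwise. I begin with three reductions. First, each $\cF_N$, hence each $\tbF_N(\cdot,A)$, depends on $f$ only through the increments $f(K)-f(L)$, so it is invariant under $f\mapsto f+c$; passing to the Mosco limit gives $\bF(\vphi+c,A)=\bF(\vphi,A)$ for all $c\in\R$, and localising this identity (letting $A$ shrink around a point) shows that $F$ is independent of its second argument, with $F(x,0)=0$. Second, each $\tbF_N(\cdot,A)$ is a nonnegative quadratic form on $L^2(\Omega)$ (extended by $+\infty$ off $\PCcT$), so its Mosco limit is again a nonnegative quadratic form; combined with the representation this yields $F(x,\xi)=\langle\mathbf A(x)\xi,\xi\rangle$ with $\mathbf A$ symmetric, measurable, and (by Proposition~\ref{prop:sobolev_bound}) satisfying $\|\mathbf A(x)\|\lesssim\bar k$ a.e. Third, by Lebesgue differentiation it suffices to prove, for a.e.\ $x_0\in\Omega$ (a Lebesgue point of $\upsilon$ at which \eqref{eq:pc} holds) and every $\xi\in\R^d$, that $\eps^{-d}\bF(\ell_{x_0,\xi},Q_\eps(x_0))\to c\,\upsilon(x_0)|\xi|^2$ as $\eps\to0$, where $\ell_{x_0,\xi}(x):=\xi\cdot(x-x_0)$ and $c$ is a fixed dimension-independent constant; its value is then pinned down by comparison with $\bF_\mu$.

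For the lower bound, fix any $\vphi_N=Q_Nf_N\to\ell_{x_0,\xi}$ in $L^2(Q_\eps(x_0))$. Since $U_{KL}$ lies between the Lebesgue densities of $\mu_N$ on the two adjacent cells, \eqref{eq:pc} lets one compare $U_{KL}$ to $\upsilon(x_0)$ on $Q_\eps(x_0)$ up to an error that vanishes in the iterated limit $\lim_{\eps\to0}\liminf_N$, so that $\cF_N(f_N,Q_\eps(x_0))\geq(1-\omega(\eps))\,\upsilon(x_0)\,\widehat\cF_N(f_N,Q_\eps(x_0))$, where $\widehat\cF_N$ is the unit-weight version of $\cF_N$. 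It remains to establish the sharp liminf inequality for the unit-weight finite-volume Dirichlet energy, $\liminf_N\widehat\cF_N(f_N,Q_\eps(x_0))\geq\tfrac12\int_{Q_\eps(x_0)}|\nabla\ell_{x_0,\xi}|^2\dd x$. I would obtain this by duality: from the Fenchel--Young inequality $\widehat\cF_N(f_N,Q_\eps)\geq\langle g_N,f_N\rangle-\widehat\cF_N^*(g_N)$ with $g_N$ a consistent discretisation of a smooth zero-mean datum, the classical convergence of finite-volume schemes for the (Neumann) Poisson problem (e.g.\ \cite{eymard2000finite}) gives $\widehat\cF_N^*(g_N)\to\tfrac12\int_{Q_\eps}|\nabla u_g|^2$ while $\langle g_N,f_N\rangle\to\int_{Q_\eps} g\,\ell_{x_0,\xi}$; optimising over the datum yields the claim, and letting $\eps\to0$ gives $\langle\mathbf A(x_0)\xi,\xi\rangle\geq c\,\upsilon(x_0)|\xi|^2$.

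For the upper bound I would use the sampled affine function as recovery sequence: $f_N(K):=\ell_{x_0,\xi}(x_K)$, so that $Q_Nf_N\to\ell_{x_0,\xi}$ in $L^2$ and $(f_N(K)-f_N(L))^2=(\xi\cdot(x_K-x_L))^2$ \emph{exactly}. The decisive point is a discrete integration by parts,
\[
  \cF_N(f_N,Q_\eps)=\tfrac12\sum_{K\in\cT_N\vert_{Q_\eps}}f_N(K)\sum_{L}\frac{U_{KL}|\Gamma_{KL}|}{d_{KL}}\big(f_N(L)-f_N(K)\big),
\]
where, using admissibility (so that $(x_L-x_K)/d_{KL}=\nu_{KL}$) together with the divergence identity $\sum_{L\sim K}|\Gamma_{KL}|\nu_{KL}=0$ valid for interior cells, the inner sum equals $\xi\cdot\sum_{L}(U_{KL}-\upsilon(x_0))|\Gamma_{KL}|\nu_{KL}$ for cells $K$ well inside $Q_\eps(x_0)$. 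The resulting bulk error is thus controlled by the oscillation of the Lebesgue density of $\mu_N$ on $Q_\eps(x_0)$, which — in the iterated limit $\lim_{\eps\to0}\limsup_N$ — is governed by \eqref{eq:pc} and the Lebesgue-point property of $x_0$. The remaining terms, supported on cells meeting $\partial Q_\eps(x_0)$, are consistent discretisations of the surface integral $\upsilon(x_0)\int_{\partial Q_\eps(x_0)}(\xi\cdot\nu)\,\ell_{x_0,\xi}\,\dd\Haus^{d-1}=\upsilon(x_0)|\xi|^2\Leb^d(Q_\eps(x_0))$; passing to the limit $N\to\infty$ and then $\eps\to0$ gives the matching bound $\langle\mathbf A(x_0)\xi,\xi\rangle\leq c\,\upsilon(x_0)|\xi|^2$. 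Together with the lower bound and the normalisation comparison this gives $F(x,u,\xi)=|\xi|^2\upsilon(x)$, hence $\tbF_N\xrightarrow{M}\bF_\mu$, completing the proof of Theorem~\ref{thm:Mosco}.

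I expect the upper bound to be the main obstacle: one has to make rigorous that the \emph{anisotropy of the mesh is invisible in the limit}, i.e.\ that after the discrete integration by parts the bulk error terms — which combine the small but rateless local oscillation of the discrete density with geometric factors that are a priori of order $[\cT_N]^{-1}$ relative to the cell volume — actually vanish in the iterated limit. This is precisely where the admissibility of the mesh (to kill the zeroth-order bulk contribution through $\sum_{L\sim K}|\Gamma_{KL}|\nu_{KL}=0$) and the exact double-limit form of \eqref{eq:pc} must be exploited simultaneously; a convenient device, as in Proposition~\ref{prop:inner_regularity}, is to work first on slightly smaller cubes with $\Leb^d$-null boundary, so that the boundary layer contributes negligibly.
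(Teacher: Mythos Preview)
Your strategy diverges from the paper's in a way that creates a real obstacle in the lower bound. The paper does \emph{not} compute $\bF(\ell_{x_0,\xi},Q_\eps)$ directly; instead it uses the self-consistent formula \eqref{eq:local_density} from the representation theorem, which expresses $F(x,u,\xi)$ via the Dirichlet-constrained minimum $\bM(\ell,Q_\eps)$. The point is that the corresponding discrete constrained minima $\cM_N$ converge to $\bM$ by a standard consequence of $\Gamma$-convergence plus equicoercivity (Lemma \ref{lem:minima-convergence}), so one only needs to \emph{compute} $\cM_N(P_N\ell,Q_\eps)$. In the Lebesgue case this is done exactly: the sampled affine function is discrete-harmonic (by Stokes, $\sum_{L\sim K}|\Gamma_{KL}|\,\nu_{KL}=0$), hence it is the constrained minimiser, and $\cM_N=\cF_N(P_N\ell,Q_\eps)$ is evaluated via the average-isotropy identity of \cite[Lemma~5.4]{gladbach2018}. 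The general case then follows from the sandwich $(\inf_{Q_{2\eps}}\upsilon_N)\,\bar\cF_N\leq\cF_N\leq(\sup_{Q_{2\eps}}\upsilon_N)\,\bar\cF_N$ together with \eqref{eq:pc}. This yields both bounds simultaneously, with no separate liminf argument.

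Your route instead requires a \emph{sharp} liminf inequality for the unit-weight localised energy $\widehat\cF_N(\cdot,Q_\eps)$ along \emph{arbitrary} $L^2$-convergent sequences. The duality you propose needs $\widehat\cF_N^*(g_N,Q_\eps)\to\tfrac12\|g\|_{H^{-1}(Q_\eps)}^2$, i.e.\ convergence of a discrete Neumann problem on the sub-cube $Q_\eps$ with a mesh that is \emph{not adapted} to $\partial Q_\eps$ (cells straddle the boundary); the cited results in \cite{eymard2000finite} treat meshes adapted to the domain and do not give this directly. Without that input the lower bound is incomplete. For the upper bound, your summation-by-parts and the feared $[\cT_N]^{-1}$ blow-up are unnecessary: writing $\cF_N(P_N\ell,Q_\eps)=\tfrac14\sum(\xi\cdot\nu_{KL})^2U_{KL}\,d_{KL}|\Gamma_{KL}|$ and splitting $U_{KL}=\upsilon(x_0)+(U_{KL}-\upsilon(x_0))$, the error term is bounded by $|\xi|^2\eps^d$ times the oscillation of $\upsilon_N$ on $Q_{2\eps}$, which vanishes in the iterated limit by \eqref{eq:pc}; the main term is handled by the average-isotropy identity. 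So the upper bound goes through, but the decisive simplification is to work with $\bM$ rather than $\bF$, which removes the need for your lower-bound step entirely.
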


To prove Theorem \ref{thm:mosco-pre_rapp_char}, 
we use a representation result 
for functionals on Sobolev spaces \cite{Bouchitte2002}.
In our application, we have $\bE(\cdot,A):= \bF(\cdot,A)$, where $\bF(\cdot,A)$ is a subsequential $\Gamma$-limit point of 
$\big( \tbF_N(\cdot,A) \big)_N$.

\begin{theorem}	\label{thm:representation}
Let $\bE: H^1(\Omega) \times \cO(\Omega) \rightarrow [0,+\infty]$ be a functional satisfying the following conditions:
\begin{enumerate}[(i)]

\item \label{it:local} \emph{locality}: 
$\bE$ is local, i.e., for all $A \in \cO(\Omega)$ 
we have $\bE(\vphi,A) = \bE(\psi,A)$ 
if $\vphi = \psi$ a.e. on $A$.

\item \label{it:subadditivity} \emph{measure property}: 
For every $\vphi \in H^1(\Omega)$ 
the set map $\bE(\vphi, \cdot)$ is the restriction of a Borel measure to $\cO(\Omega)$.

\item \label{it:Sobolev} \emph{Sobolev bound}:  
There exists a constant $c>0$ and $a \in L^1(\Omega)$ 
such that
\begin{align*}
	 \frac{1}{c} \int_A | \nabla \vphi|^2 \dd x 
	 	\leq \bE(\vphi, A) 
		\leq c \int_A 
			\big(a(x) + |\nabla \vphi|^2 \big)
				\dd x
\end{align*}
for all $\vphi \in H^1(\Omega)$ and $A \in \cO(\Omega)$.

\item \label{it:lsc} \emph{lower semicontinuity}:
$\bE(\cdot, A)$ is weakly sequentially lower semicontinuous in $H^1(\Omega)$. 
\end{enumerate}
Then $\bE$ can be represented in integral form
\begin{align*}
	\bE(\vphi,A) = \int_A f(x, \vphi, \nabla \vphi) \dd x,
\end{align*}
where the measurable function $f: \Omega \times \R \times \R^d \rightarrow [0,+\infty)$ satisfies the self-consistent formula
\begin{align}	
\label{eq:local_density}
	f(x,u,\xi)
		 := \limsup_{\eps \to 0^+} 
		 	\frac{M\big( 
					 u + \xi(\cdot - x), Q_{\eps}(x)
					\big)}{\eps^d},
\end{align}
where $Q_{\eps}(x)$ is the open cube of side-length $\eps > 0$ centred at $x$, and
\begin{align} \label{eq:def_Mpsi_neighbor}
	M(\psi, A) 
		 &:=
	\inf 
		\big\{ 
			\bE(\vphi,A) 
				\suchthat 
			\vphi \in H^1(\Omega), \; \vphi - \psi \in H^1_0(A)
		\big\}  
\end{align}
for any $\psi \in H^1(\Omega)$ and any open cube $A \subseteq \Omega$.

\end{theorem}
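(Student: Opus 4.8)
The plan is to recognise Theorem~\ref{thm:representation} as (a mild restatement of) \cite[Theorem~2]{Bouchitte2002} and, since that is the only place it enters, to recall the blow-up argument behind it rather than reproduce it in full. First I would fix $\vphi \in H^1(\Omega)$. By the measure property, $A \mapsto \bE(\vphi,A)$ is the restriction to $\cO(\Omega)$ of a Borel measure, and the upper Sobolev bound forces this measure to be absolutely continuous with respect to $\Leb^d$, with density dominated by $c\big(a + |\nabla\vphi|^2\big) \in L^1(\Omega)$, while the lower bound gives density $\geq \tfrac1c|\nabla\vphi|^2$. Hence there is $g_\vphi \in L^1(\Omega)$ with $\bE(\vphi,A) = \int_A g_\vphi \dd x$ for all $A \in \cO(\Omega)$, and by the Lebesgue differentiation theorem $g_\vphi(x) = \lim_{\eps \to 0^+} \eps^{-d}\bE\big(\vphi, Q_\eps(x)\big)$ for a.e.\ $x \in \Omega$.

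The core of the proof is to show that, for a.e.\ $x_0$, the value $g_\vphi(x_0)$ depends on $\vphi$ only through the pair $\big(\vphi(x_0),\nabla\vphi(x_0)\big)$ and equals the right-hand side of \eqref{eq:local_density}. I would argue at a point $x_0$ that is simultaneously a Lebesgue point of $g_\vphi$ and of $|\nabla\vphi|^2$ and a point of approximate differentiability of $\vphi$, so that the affine map $\ell_{x_0}(y) := \vphi(x_0) + \nabla\vphi(x_0)\cdot(y-x_0)$ satisfies $\|\vphi - \ell_{x_0}\|_{L^2(Q_\eps(x_0))} = o(\eps^{1+d/2})$ and $\|\nabla\vphi - \nabla\ell_{x_0}\|_{L^2(Q_\eps(x_0))} = o(\eps^{d/2})$ as $\eps \to 0$. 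Interpolating between $\vphi$ and $\ell_{x_0}$ over $\sim \log(1/\eps)$ concentric annular layers inside $Q_\eps(x_0)$ by cut-off functions, and estimating the energy of the resulting competitor via locality, the two-sided Sobolev bound and weak lower semicontinuity (the usual \emph{fundamental estimate}), one obtains that $\eps^{-d}\bE\big(\vphi, Q_\eps(x_0)\big)$ and $\eps^{-d}M\big(\ell_{x_0}, Q_\eps(x_0)\big)$ share the same limit, where $M$ is the minimisation functional \eqref{eq:def_Mpsi_neighbor}; both directions of the gluing are needed, replacing $\vphi$ near $\partial Q_\eps(x_0)$ by $\ell_{x_0}$ to pass from $\bE$ to $M$, and replacing an almost-optimal $M$-competitor in the interior by $\vphi$ to pass back. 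This identifies $g_\vphi(x_0)$ with $f\big(x_0,\vphi(x_0),\nabla\vphi(x_0)\big)$, $f$ being defined by \eqref{eq:local_density}.

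It then remains to assemble the representation. Measurability of $x \mapsto f(x,u,\xi)$ for fixed $(u,\xi)$ follows from \eqref{eq:local_density} because $x \mapsto M(\ell, Q_\eps(x))$ is Borel, and the identity $\bE(\vphi,A) = \int_A f(x,\vphi,\nabla\vphi) \dd x$ holds for affine $\vphi$ by construction, extends to piecewise affine $\vphi$ by additivity and locality, and then to all $\vphi \in H^1(\Omega)$ by density, using lower semicontinuity for the ``$\leq$'' inequality and the upper bound together with the blow-up estimates above for the ``$\geq$'' inequality. (The second line of \eqref{eq:rappF0} — finiteness forcing $\vphi \in H^1$ — is not part of Theorem~\ref{thm:representation}, whose domain is already $H^1(\Omega)$; for the application it was recorded separately in Theorem~\ref{thm:Sob-finite}.)

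I expect the fundamental gluing estimate in the blow-up — the step that decouples $g_\vphi(x_0)$ from everything but the value and gradient of $\vphi$ at $x_0$ and ties it to the $M$-functional — to be the only genuinely delicate point; the measure-theoretic reductions, the differentiation theorem and the density argument are routine once it is in place. For the intended application $\bE = \bF$, the four hypotheses are exactly what is being established in the remainder of this section: locality (Proposition~\ref{prop:locality}, via the measure-zero inner approximation of Proposition~\ref{prop:inner_regularity}), the measure property (De Giorgi--Letta, from sub- and superadditivity together with the inner regularity of Proposition~\ref{prop:inner_regularity}), the two-sided Sobolev bound (Propositions~\ref{prop:sobolev_limit} and \ref{prop:sobolev_bound}), and weak lower semicontinuity in $H^1(\Omega)$ (a consequence of Mosco convergence).
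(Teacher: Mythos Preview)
Your approach matches the paper's: Theorem~\ref{thm:representation} is not proved in the paper but simply quoted as \cite[Theorem~2]{Bouchitte2002}, and your sketch of the blow-up argument behind that result is a reasonable summary. The only substantive content the paper adds --- which you gloss over by calling it a ``mild restatement'' --- is a short remark verifying that the minimisation problem $M(\psi,A)$ with the boundary condition $\vphi-\psi\in H^1_0(A)$ coincides with the version $\bar M(\psi,A)$ used in \cite{Bouchitte2002}, where competitors are required to agree with $\psi$ in a neighbourhood of $\partial A$; this uses the strong $H^1$-continuity of $\bE(\cdot,A)$ granted by the Sobolev bound~\eqref{it:Sobolev}.
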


\begin{remark}[Equivalence of definitions] 
The paper \cite{Bouchitte2002} contains the statement of Theorem \ref{thm:representation} with $M(\psi,A)$ replaced by 
\begin{align*}
	\bar{M}(\psi, A) 
		& := \inf 
			\left\{ \bE(\vphi,A) 
				\suchthat 
				\vphi \in H^1(\Omega), \; 
				\vphi = \psi 
				\text{ in a neighbourhood of }A
			\right\}.
\end{align*}
We claim that $M = \bar{M}$. As any competitor $\vphi$ for $M$ is a competitor for $\bar{M}$, it is clear that $M \geq \bar{M}$.
To show the opposite inequality, we fix $\eps > 0$ and take $\vphi \in H^1(A)$ such that $\bE(\vphi,A) \leq \bar{M}(\psi,A) + \eps$. 
It follows that $\vphi - \psi \in H_0^1(A)$, 
and there exists a sequence 
$(\eta_n)_n \subseteq C_{\rm c}^{\infty}(A)$ such that 
$\eta_n \to \vphi - \psi$ in $H^1(\Omega)$ 
as $n \to \infty$. 
Set $\vphi_n:= \psi + \eta_n$, so that $\vphi_n \to \vphi$ in $H^1(\Omega)$. 
Note that $\vphi_n$ is a competitor for $M(\psi,A)$, 
as it coincides with $\psi$ on $A \setminus \text{spt}(\eta_n)$, 
hence 
$M(\psi,A) \leq \bE(\vphi_n,A)$ for all $n \in \N$.
Using continuity of $\bE(\cdot,A)$ with respect to the strong $H^1(\Omega)$ convergence 
(as follows from \eqref{it:Sobolev}), 
we may pass to the limit to obtain
\begin{align*}
	M(\psi,A) 
		\leq \lim_{n \to \infty} \bE(\vphi_n,A) 
		   = \bE(\vphi,A) 
		\leq \bar{M}(\psi,A) + \eps. 
\end{align*}
As $\eps > 0$ is arbitrary, the claim follows.
\end{remark}

In the remainder of this section we will verify that the functional $\bF$ from Theorem \ref{thm:mosco-pre_exist} satisfies the conditions of Theorem \ref{thm:representation}.
In particular, we will prove the locality (Section \ref{section:locality}) and the subadditivity (Section \ref{section:subadditivity}). 
The proof of Theorem \ref{thm:mosco-pre_rapp_char} will be completed at the end of Section \ref{section:subadditivity}.
The proof of Theorem \ref{thm:f_quadratic} is contained in Section \ref{sec:char}.

\subsection{Locality}	\label{section:locality}

A consequence of the inner regularity result from Proposition \ref{prop:inner_regularity} is a simple proof of the locality of $\bF$. 
An analogous result appears in \cite[Proposition 3.9]{Alicandro2004} on the cartesian grid.
The proof in our setting is much simpler due to the short range of interactions.

\begin{proposition}[Locality] \label{prop:locality}
Assume that \eqref{eq:ub} holds. 
Suppose that $\big(\tbF_N(\cdot,A)\big)_N$ is $L^2(\Omega)$-Mosco convergent to some functional $\bF(\cdot,A)$ for every $A \in \cO(\Omega)$. 
Then $\bF$ is local, i.e., for any $A \in \cO(\Omega)$ and $\vphi,\psi \in L^2(\Omega)$ such that $\vphi = \psi$ a.e.~on $A$, we have $\bF(\vphi,A) = \bF(\psi,A)$.
\end{proposition}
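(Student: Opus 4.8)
The statement to prove is the locality of the Mosco-limit $\bF$: if $\varphi = \psi$ a.e.\ on $A$, then $\bF(\varphi,A) = \bF(\psi,A)$. I would reduce this to the inner regularity result (Proposition~\ref{prop:inner_regularity}) together with the short range of the discrete interactions, exactly as the paper hints. The key point is that the discrete energy $\cF_N(f,A')$ only sees cells meeting $A'$, and the discrete derivative $f(K)-f(L)$ only couples neighbouring cells; since the mesh size vanishes, modifying a function outside a slightly larger neighbourhood of $A'$ does not affect the energy on $A'$ once $N$ is large enough.

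\textbf{Step 1 (reduce to $\varphi,\psi \in H^1$).} If $\varphi \notin H^1(\Omega)$, then by Theorem~\ref{thm:Sob-finite} we have $\bF(\varphi,A) = +\infty$; since $\varphi = \psi$ a.e.\ on $A$, one checks from the definition of the localised functionals that $\psi$ cannot have finite energy on $A$ either (a recovery sequence for $\bF(\psi,A)$ would, after the construction below, produce one for $\bF(\varphi,A)$), so both sides are $+\infty$. Hence assume $\varphi,\psi \in H^1(\Omega)$ with $\varphi = \psi$ a.e.\ on $A$.

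\textbf{Step 2 (cut-and-paste on a relatively compact subset).} Fix $A' \Subset A$ with $\Leb^d(\partial A') = 0$, and pick an intermediate open set $A''$ with $A' \Subset A'' \Subset A$. Let $(\psi_N)_N$ with $\psi_N = Q_N g_N$ be a recovery sequence for $\bF(\psi, A'')$, so $\psi_N \to \psi$ in $L^2(\Omega)$ and $\limsup_N \cF_N(g_N, A'') \leq \bF(\psi, A'')$. Define $f_N \in \R^{\cT_N}$ by $f_N(K) := g_N(K)$ if $\overline K \subseteq A''$ and $f_N(K) := (P_N \varphi)(K)$ otherwise; set $\varphi_N := Q_N f_N$. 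Since $\varphi = \psi$ a.e.\ on $A \supseteq A''$ and $P_N\varphi \to \varphi$ in $L^2$ (for $\varphi \in H^1$, using $\zeta$-regularity), one gets $\varphi_N \to \varphi$ in $L^2(\Omega)$. For $N$ large enough that $[\cT_N]$ is smaller than $\dist(A', \partial A'')$, every pair $K \sim L$ with $K, L \in \cT_N\vert_{A'}$ satisfies $\overline K, \overline L \subseteq A''$, so $f_N \equiv g_N$ on $\cT_N\vert_{A'}$ and therefore $\cF_N(f_N, A') = \cF_N(g_N, A') \leq \cF_N(g_N, A'')$. Passing to the liminf and using $\Gamma$-convergence on $A'$:
\begin{align*}
	\bF(\varphi, A') \leq \liminf_{N\to\infty} \cF_N(f_N, A') \leq \limsup_{N\to\infty}\cF_N(g_N, A'') \leq \bF(\psi, A'').
\end{align*}

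\textbf{Step 3 (take suprema and symmetrise).} Taking the supremum over $A'' \Subset A$ and applying inner regularity (Proposition~\ref{prop:inner_regularity}) to the right-hand side, $\bF(\varphi, A') \leq \bF(\psi, A)$ for every admissible $A'$; then taking the supremum over $A' \Subset A$ with $\Leb^d(\partial A') = 0$ and using inner regularity on the left gives $\bF(\varphi, A) \leq \bF(\psi, A)$. Swapping the roles of $\varphi$ and $\psi$ yields the reverse inequality, hence equality.

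\textbf{Main obstacle.} The delicate point is the error term in Step 2: after the cut-and-paste the new configuration $f_N$ may differ abruptly from $g_N$ across the interface $\partial A''$, creating spurious energy there. This is harmless for $\cF_N(f_N, A')$ (we only evaluated the energy on $A' \Subset A''$, away from the interface, where $f_N = g_N$), which is why restricting to $A' \Subset A$ first and invoking inner regularity at the very end is essential — one never needs to control the energy of the glued function on all of $A$. A secondary technical point is verifying $P_N \varphi \to \varphi$ in $L^2$ for $\varphi \in H^1$; this follows from $\zeta$-regularity and a standard Poincaré/approximation argument, or one may first prove the result for $\varphi \in C_c^\infty$ and extend by the density argument already used in Proposition~\ref{prop:sobolev_bound}, combined with the $L^2$-continuity of $\bF(\cdot,A)$ from the Sobolev bounds.
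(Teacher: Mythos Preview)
Your core strategy---cut-and-paste using the short range of the discrete interactions, combined with inner regularity---is exactly the paper's. However, there are technical gaps in the execution that the paper avoids.

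The main problem is your use of $P_N\vphi$ (point evaluation $\vphi(x_K)$) to extend the recovery sequence outside $A''$. For general $\vphi \in L^2(\Omega)$ this is not well-defined, and even for $\vphi \in H^1(\Omega)$ the convergence $Q_N P_N \vphi \to \vphi$ in $L^2$ is not immediate. Your Step~1 reduction to $H^1(\Omega)$ does not rescue this: the case $\vphi \in H^1(\Omega)$, $\psi \notin H^1(\Omega)$ (entirely possible since they only agree on $A$) is not handled, and the appeal to ``the construction below'' is circular because that construction itself needs the regularity. The ``$L^2$-continuity of $\bF(\cdot,A)$'' you invoke at the end is also false---$\Gamma$-limits are only $L^2$-lower semicontinuous; continuity holds in the strong $H^1$ topology only.

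The paper's fix is a single change that removes all of this: instead of $P_N\vphi$, use an \emph{arbitrary} piecewise-constant approximation $\hat\psi_N \in \PCN$ with $\hat\psi_N \to \psi$ in $L^2(\Omega)$ (which always exists, e.g.\ by cell averages). This works for any $\vphi,\psi \in L^2(\Omega)$, so Step~1 can be dropped. The paper also streamlines the geometry: rather than $A' \Subset A'' \Subset A$ and two applications of inner regularity, it reduces once to $\Leb^d(\partial A)=0$, takes a recovery sequence $\vphi_N$ for $\bF(\vphi,A)$, glues $\psi_N := \vphi_N$ on $C_N := \bigcup\{K : K \in \cT_N\vert_A\}$ and $\psi_N := \hat\psi_N$ elsewhere, and observes that $\tbF_N(\psi_N,A) = \tbF_N(\vphi_N,A)$ exactly since the two agree on all cells touching $A$. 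The convergence $\psi_N \to \psi$ in $L^2$ follows because $C_N \setminus A$ lies in a $2[\cT_N]$-neighbourhood of $\partial A$, which has vanishing measure. This gives $\bF(\psi,A) \leq \bF(\vphi,A)$ directly.
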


\begin{proof}
Let $A \in \cO(\Omega)$ and take $\vphi, \psi \in L^2(\Omega)$ such that $\vphi = \psi$ a.e.~on $A$. 
In view of the inner regularity result from Proposition \ref{prop:inner_regularity} we may assume that $\Leb^d(\partial A)=0$.
By symmetry, it suffices to prove that $\bF(\vphi,A) \geq \bF(\psi,A)$. 

Define $C_N := \bigcup\{K : K \in \cT_N \vert_A\}$ and $C := \bigcup_N C_N$, so that $C \supseteq A$. 
We claim that 
\begin{align}	\label{eq:proof_locality_inclusion}
	C \setminus A \subseteq B_N, 
	\quad \text{where } 
	 B_N := \big\{ x \in \Omega \suchthat d(x,\partial A) < 2[\cT_N] \big\}.
\end{align} 
Indeed, for every $x \in C \setminus A$ there exists $N \geq 1$ and $K \in \cT_N$ such that $x \in K \setminus A$ and $\overline{K} \cap A \neq \emptyset$. 
Therefore, $d(x,\partial A) = d(x,A) \leq \diam(K) \leq [\cT_N]$, which implies \eqref{eq:proof_locality_inclusion}.

Let $(\vphi_N)_N$ be a recovery sequence for $\bF(\vphi,A)$, i.e., $\vphi_N \to \vphi$ in $L^2(\Omega)$ and
\begin{align}\label{eq:tbF-limit}
	\lim_{N \to \infty} \tbF_N(\vphi_N, A) = \bF(\vphi,A).
\end{align}
Fix $\hat{\psi}_N \in \PCN$ such that $\hat{\psi}_N \to \psi$ in $L^2(\Omega)$ as $N \to \infty$, and define $\psi_N : \Omega \to \R$ by
\begin{align*}
	\psi_N(x) := 
	\begin{cases}
	 	 \vphi_N(x) &\text{if } x \in C , \\
	\hat{\psi}_N(x) &\text{if } x \in \Omega \setminus C .
				   \end{cases}
\end{align*}
We claim that $\psi_N \rightarrow \psi$ in $L^2(\Omega)$ as $N \to \infty$. Indeed, since $\vphi = \psi$ a.e. on $A$, we have
\begin{align}	\label{eq:proof_locality_L2bound}
\begin{aligned}
	\| \psi_N - \psi \|_{L^2(\Omega)}^2 
	&= \|  \hat{\psi}_N - \psi \|_{L^2(\Omega \setminus C)}^2 
		 + \|  \vphi_N - \psi \|_{L^2(C \setminus A)}^2
 		 + \|  \vphi_N - \vphi \|_{L^2(A)}^2. 
\end{aligned}
\end{align}
The first and the last term on the right-hand side vanish as $N \to \infty$, since $\vphi_N \rightarrow \vphi$ and $\hat{\psi}_N \rightarrow \psi$ in $L^2(\Omega)$. On the other hand, \eqref{eq:proof_locality_inclusion} yields
\begin{align*}
	\limsup_{N \to \infty} 
		\| \vphi_N - \psi \|_{L^2(C \setminus A)} 
	& \leq \limsup_{N \to \infty} 
		\big( \| \vphi \|_{L^2(B_N)} 
	   	+ \| \psi \|_{L^2(B_N)} \big)
 \\	&
  = \| \vphi \|_{L^2(\partial A)} 
 		+ \| \psi \|_{L^2(\partial A)} = 0,
\end{align*}
since $\Leb^d(\partial A) = 0$. 
Therefore, using \eqref{eq:proof_locality_L2bound} we infer that $\psi_N \rightarrow \psi$ in $L^2(\Omega)$ as $N \to \infty$.
Using this fact, the $\Gamma$-convergence of $\tbF_N$ in $L^2$, the fact that $\vphi_N = \psi_N$ a.e.~on $C$, and 
\eqref{eq:tbF-limit}, we obtain
\begin{align*}
	\bF(\psi,A) 
	\leq \limsup_{N \to \infty} \tbF_N(\psi_N,A) 
	=	 \limsup_{N \to \infty} \tbF_N(\vphi_N,A) 
	= 	 \bF(\vphi,A),
\end{align*}
which concludes the proof.
\end{proof}

\subsection{Subadditivity}	\label{section:subadditivity}

We now prove subadditivity of the functional $A \mapsto \bF(\vphi, A)$ for any $\vphi \in H^1(\Omega)$.
This is the first step towards the verification of \eqref{it:subadditivity} in Theorem \ref{thm:representation}.
The proof is inspired by \cite[Proposition 3.7]{Alicandro2004} and follows similar ideas as in the proof of Proposition \ref{prop:inner_regularity}.

\begin{proposition}[Subaddivity] \label{prop:subadditivity}
Assume \eqref{eq:ub}. 
Suppose that $\big(\tbF_N(\cdot,A)\big)_N$ is $L^2(\Omega)$-Mosco convergent to some functional $\bF(\cdot,A)$ for every $A \in \cO(\Omega)$. 
Then the functional $\bF(\vphi,\cdot)$ is subadditive for any $\vphi \in H^1(\Omega)$, in the sense that
\begin{align}	\label{eq:subadditivity_prop}
	 \bF(\vphi,A\cup B) \leq \bF(\vphi,A) + \bF(\vphi,B) \qquad \text{for all } A,B \in \cO(\Omega).
\end{align}
\end{proposition}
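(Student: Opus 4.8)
The plan is to mimic the cut-off/averaging argument from the proof of Proposition~\ref{prop:inner_regularity}, interpolating between a recovery sequence for $\bF(\vphi,A)$ and one for $\bF(\vphi,B)$ across a thin collar inside $A$. Concretely, I would first reduce \eqref{eq:subadditivity_prop} to the following \emph{fundamental estimate}: for every $\vphi\in H^1(\Omega)$, all $A,B\in\cO(\Omega)$, and every $A'\ssubset A$,
\[
	\bF(\vphi,A'\cup B)\le\bF(\vphi,A)+\bF(\vphi,B).
\]
Granting this, subadditivity follows at once. Indeed, $\bF(\vphi,\cdot)$ is monotone under set inclusion (this is inherited from the obvious monotonicity of $A\mapsto\tbF_N(\vphi,A)$ upon passing to $\Gamma$-limits), and for $U\ssubset A\cup B$ the compact set $\overline U\setminus B$ is contained in $A$, so one may pick $A'$ with $\overline U\setminus B\ssubset A'\ssubset A$, giving $U\subseteq A'\cup B$ and hence $\bF(\vphi,U)\le\bF(\vphi,A'\cup B)\le\bF(\vphi,A)+\bF(\vphi,B)$; taking the supremum over $U\ssubset A\cup B$ and invoking the inner regularity of Proposition~\ref{prop:inner_regularity} yields \eqref{eq:subadditivity_prop}. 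We may assume $\bF(\vphi,A),\bF(\vphi,B)<\infty$, which holds since $\vphi\in H^1(\Omega)$ by Proposition~\ref{prop:sobolev_bound}.

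To prove the fundamental estimate, fix $A'\ssubset A$ and $M\in\N$, and — exactly as in the proof of Proposition~\ref{prop:inner_regularity} — introduce nested open sets $A'=A_0\subset A_1\subset\dots\subset A_{5(M+1)}\ssubset A$ with layers of width comparable to $1/M$, together with cut-offs $\rho_i\in C^\infty(\R^d)$, $i=1,\dots,M$, satisfying \eqref{eq:cutoff} (so $\rho_i\equiv1$ on $A_{5i+2}$, $\rho_i\equiv0$ off $A_{5i+3}$, $|\nabla\rho_i|\lesssim M$). Let $\vphi_N=Q_N f_N\to\vphi$ and $\psi_N=Q_N g_N\to\vphi$ in $L^2(\Omega)$ be recovery sequences for $\bF(\vphi,A)$ and $\bF(\vphi,B)$ respectively, so that $\cF_N(f_N,A)\to\bF(\vphi,A)$ and $\cF_N(g_N,B)\to\bF(\vphi,B)$. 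Set $r_N^i(K):=\rho_i(x_K)$ and $f_N^i:=r_N^i f_N+(1-r_N^i)g_N$, $\vphi_N^i:=Q_N f_N^i$. Then $\vphi_N^i\to\vphi$ in $L^2(\Omega)$ uniformly in $i$, since $|\vphi_N^i-\vphi|\le|\vphi_N-\vphi|+|\psi_N-\vphi|$ pointwise; and for $[\cT_N]$ small (depending on $M$) one has $f_N^i\equiv f_N$ on $\cT_N\vert_{A_{5i+1}}$ and $f_N^i\equiv g_N$ on $\cT_N\vert_{\Omega\setminus\overline{A_{5i+4}}}$, as in \eqref{eq:cutoff_props-alt}.

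Next I would decompose $\cF_N(f_N^i,A'\cup B)$, using the same short-range edge bookkeeping as in Proposition~\ref{prop:inner_regularity} (every nearest-neighbour pair meeting $A'\cup B$ is accounted for, since the cells are much smaller than the layer width), into the contribution of the region where $f_N^i\equiv f_N$, which is $\le\cF_N(f_N,A)$; the region where $f_N^i\equiv g_N$, which is $\le\cF_N(g_N,B)$; and the ``transition'' contribution $\cF_N\big(f_N^i,(A'\cup B)\cap(A_{5(i+1)}\setminus\overline{A_{5i}})\big)$. The key point for the last term is that $A'\cap(A_{5(i+1)}\setminus\overline{A_{5i}})=\emptyset$ (as $A'\subseteq A_{5i}$ for $i\ge1$), so this region lies in $A\cap B$. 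On it I would expand the discrete derivative $\nabla f_N^i(K,L)=r_N^i(L)\nabla f_N(K,L)+(1-r_N^i(L))\nabla g_N(K,L)+(f_N(K)-g_N(K))\nabla r_N^i(K,L)$, and use $|\nabla\rho_i|\lesssim M$, assumption \eqref{eq:ub} and $\zeta$-regularity to bound the transition term by $\cF_N(f_N,A_{5(i+1)}\setminus\overline{A_{5i}})+\cF_N(g_N,B)+C\overline{k}M^2\|\vphi_N-\psi_N\|_{L^2(\Omega)}^2$. Averaging over $i=1,\dots,M$, using that the bands $A_{5(i+1)}\setminus\overline{A_{5i}}$ are pairwise disjoint (hence $\sum_i\cF_N(f_N,A_{5(i+1)}\setminus\overline{A_{5i}})\lesssim\cF_N(f_N,A)$) and that $\|\vphi_N-\psi_N\|_{L^2(\Omega)}\to0$, I would obtain $\limsup_N\frac1M\sum_{i=1}^M\cF_N(f_N^i,A'\cup B)\le(1+\tfrac CM)\big(\bF(\vphi,A)+\bF(\vphi,B)\big)$. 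Choosing for each $N$ an index $i_N$ with $\cF_N(f_N^{i_N},A'\cup B)\le\frac1M\sum_i\cF_N(f_N^i,A'\cup B)$, noting $\vphi_N^{i_N}\to\vphi$ in $L^2(\Omega)$, and applying the $\Gamma$-liminf inequality gives $\bF(\vphi,A'\cup B)\le(1+\tfrac CM)\big(\bF(\vphi,A)+\bF(\vphi,B)\big)$; letting $M\to\infty$ completes the proof of the fundamental estimate.

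The main technical obstacle is the combinatorial ``no straddling edges'' bookkeeping in the decomposition of $\cF_N(f_N^i,A'\cup B)$ — assigning each contributing nearest-neighbour pair to exactly one of the three regions — which is precisely where the factor $5$ in the layer construction is used. This is identical to the argument already carried out in the proof of Proposition~\ref{prop:inner_regularity}, so it carries over with only notational changes; the other ingredients (Leibniz expansion, $\zeta$-regularity, $L^2$-convergence of the recovery sequences) are routine. A minor point is the reduction step, which relies on the monotonicity of $\bF(\vphi,\cdot)$ together with Proposition~\ref{prop:inner_regularity}.
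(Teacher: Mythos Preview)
Your approach is essentially the same as the paper's: the same cut-off/averaging construction inherited from Proposition~\ref{prop:inner_regularity}, the same Leibniz expansion, and the same choice of $i_N$ at the end. The paper reduces instead to $A'\ssubset A$, $B'\ssubset B$ simultaneously (rather than your one-sided reduction via monotonicity), but both reductions work and rely on Proposition~\ref{prop:inner_regularity}.

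There is one slip to fix. In your bound for the transition term you write $\cF_N(g_N,B)$, but this is too crude: if each of the $M$ summands carries a full $\cF_N(g_N,B)$, the average over $i$ does \emph{not} kill it, and after taking $\limsup_N$ you would obtain $\bF(\vphi,A'\cup B)\le (1+\tfrac{C}{M})\bF(\vphi,A)+2\bF(\vphi,B)$, which does not yield subadditivity. The correct bound---which is exactly what your Leibniz expansion gives---is the localized one
\[
\cF_N\big(f_N^i,(A_{5(i+1)}\setminus\overline{A_{5i}})\cap B\big)
\lesssim \cF_N\big(f_N,A_{5(i+1)}\setminus\overline{A_{5i}}\big)
+\cF_N\big(g_N,(A_{5(i+1)}\setminus\overline{A_{5i}})\cap B\big)
+ \overline{k}M^2\|\vphi_N-\psi_N\|_{L^2}^2,
\]
and then disjointness of the bands makes \emph{both} the $f_N$- and $g_N$-sums telescope (each bounded by $\lesssim \cF_N(f_N,A)$ and $\lesssim \cF_N(g_N,B)$ respectively), yielding your claimed averaged estimate. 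This is precisely how the paper handles it.
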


\begin{proof}
Fix $A,B \in \cO(\Omega)$. 
For all $A' \ssubset A$, $B' \ssubset B$, and $\vphi \in H^1(\Omega)$ we will prove that
\begin{align*}
	\bF(\vphi,A' \cup B')
		\leq \bF(\vphi,A) + \bF(\vphi,B).
\end{align*}
In view of the the inner regularity (Proposition \ref{prop:inner_regularity}), this implies  \eqref{eq:subadditivity_prop}. 

Pick $A' \ssubset A$ and $B' \ssubset B$  
and let $(\psi_N)_N$, $(\vphi_N)_N$ be recovery sequences  for $\bF(\vphi,A)$ and $\bF(\vphi,B)$ respectively, 
which we can assume to be finite. 
Fix $M \in \N$ and suppose that 
$[\cT_N] < \frac{1}{5(M+1)}$. 
We define the sets
\begin{align*}
	A_j:= \left\{ x \in A 
		\suchthat 
		d(x,A') < \frac{j}{5(M+1)} d(A', A^c) \right\} 
		\subset A
\end{align*} 
for $j \in \{1, \ldots, 5(M+1)\}$. Moreover, for $i \in \{1, ..., M \}$ let $\rho_i$ be a cutoff function $\rho_i \in C^{\infty}(\R^d)$ satisfying 
\begin{align*}
	\rho_i|_{A_{5i+2}} = 1, \quad 
	\rho_i|_{\Omega \setminus A_{5i+3}} = 0, \quad 
	0 \leq \rho_i \leq 1, \quad
	|\nabla \rho_i | \lesssim M.
\end{align*} 

We then consider the  $L^2(\Omega)$-convergent sequences
\begin{align*}
	\vphi_N^i := \Q_N P_N \Big( \rho_i \psi_N + (1-\rho_i) \vphi_N \Big) \xrightarrow[N \to \infty]{} \vphi, \quad \forall i \in \{1, \ldots, M \}.
\end{align*}
By definition, we have $\vphi_N^i \equiv \psi_N$ in $A_{5i+1}$ and $\vphi_N^i \equiv \vphi_N$ in $\Omega \setminus \overline{A_{5i+4}}$. 
Arguing as in the proof of Proposition 
\ref{prop:inner_regularity}, one deduces the bound
\begin{align}	\label{eq:proof_subadd_upbound}
	\tbF_N(\vphi_N^i,A' \cup B') 
		\leq \tbF_N(\psi_N,A) 
		+ \tbF_N\big(\vphi_N^i,(A_{5(i+1)} \setminus \overline{A_{5i}})\cap B'\big) 
		+ \tbF_N(\vphi_N,B)
\end{align}
for $i \in \{ 1, \ldots, M \}$, as well as the bound
\begin{align*}	
	\frac{1}{M} \sum_{i=1}^M 
		\tbF_N\big(\vphi_N^i,(A_{5(i+1)} \setminus \overline{A_{5i}})\cap B'\big) 
		\lesssim 
			\frac{E}{M} 
			+ \bar{k}M^2 \| \psi_N - \vphi_N \|_{L^2(\Omega)}^2,
\end{align*}
where we used that $(A_{5(i+1)} \setminus \overline{A_{5i}})\cap B' \subset A \cap B$ and that the energy of the recovery sequences $\psi_N$ and $\vphi_N$ is bounded from above, thus
\begin{align*}
	\sup_{N \in \N} 
		\tbF_N(\psi_N,A) 
	\lor \sup_{N \in \N} 
		\tbF_N(\vphi_N,B)
	 \leq 
	 	E = 
	 	E(A,B)< +\infty.
\end{align*} 
We then plug the error estimates above into \eqref{eq:proof_subadd_upbound} and deduce
\begin{align*}
	\frac{1}{M} \sum_{i=1}^M \tbF_N(\vphi_N^i,A' \cup B') -\tbF_N(\psi_N,A) - \tbF_N(\vphi_N,B) \lesssim  \frac{E}{M} + \bar{k} M^2 \| \psi_N - \vphi_N \|_{L^2(\Omega)}^2.
\end{align*}
Using the fact that $\psi_N, \vphi_N \rightarrow \vphi$ are recovery sequences, we may pass to the limit $N \to \infty$ in the previous bound and obtain, for fixed $M \in \N$,
\begin{align}	\label{eq:proof_subadd_averageupbound}
	\limsup_{N \to \infty} \frac{1}{M} \sum_{i=1}^M \tbF_N(\vphi_N^i,A' \cup B') - \bF(\vphi,A)- \bF(\vphi,B) \lesssim \frac{E}{M}.
\end{align}

Arguing again as in the proof of Proposition \ref{prop:inner_regularity}, we note that, for fixed $M \in \N$, there exists a sequence $\vphi_N^{i_N}$ satisfying $\vphi_N^{i_N} \rightarrow \vphi$ in $L^2(\Omega)$ as $N \to \infty$ and 
\begin{align*}
	\tbF_N(\vphi_N^{i_N},A' \cup B') \leq \frac{1}{M} \sum_{i=1}^M \tbF_N(\vphi_N^i,A' \cup B').
\end{align*}
Together with \eqref{eq:proof_subadd_averageupbound}, this yields
\begin{align*}
	\bF(\vphi,A' \cup B') \leq \limsup_{N \to \infty} \tbF_N(\vphi_N^{i_N},A' \cup B') \leq \bF(\vphi,A) + \bF(\vphi,B)  + C\frac{E}{M}
\end{align*}
for every $M \in \N$, for some $C=C(d,\zeta)$ and $E=E(A,B) \in \R_+$. Taking the limit $M \rightarrow \infty$, we infer that
\begin{align*}
	\bF(\vphi,A' \cup B') \leq \bF(\vphi,A) + \bF(\vphi,B)
\end{align*}
and the proof is complete.
\end{proof}

The following additivity property turns out to be much easier to prove than the corresponding result on the grid in \cite{Alicandro2004}, 
due to inner regularity in combination with the very short range of interaction (nearest neighbours on a scale of order $[\cT_N]$).

\begin{proposition}[Additivity on disjoint sets]	\label{prop:additivity}
Assume \eqref{eq:ub}. 
For any $\vphi \in H^1(\Omega)$ the function $\bF(\vphi,\cdot)$ is additive on disjoint sets, i.e.,
\begin{align}
	 \bF(\vphi,A\cup B) = \bF(\vphi,A) + \bF(\vphi,B)
\end{align}
for all $A,B \in \cO(\Omega)$ such that $A \cap B = \emptyset$.
\end{proposition}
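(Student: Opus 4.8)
The plan is to deduce this from the subadditivity already established in Proposition~\ref{prop:subadditivity} together with the reverse (super-additivity) inequality, the latter being where the hypothesis $A \cap B = \emptyset$ enters. So, fixing $\vphi \in H^1(\Omega)$ and disjoint $A, B \in \cO(\Omega)$, it remains to show $\bF(\vphi,A) + \bF(\vphi,B) \le \bF(\vphi, A\cup B)$.

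First I would pass to relatively compact subsets. Pick $A' \Subset A$ and $B' \Subset B$; then $\overline{A'}$ and $\overline{B'}$ are disjoint compact sets, so $d_0 := \dist(\overline{A'}, \overline{B'}) > 0$. Let $(\vphi_N)_N$ be a recovery sequence for $\bF(\vphi, A\cup B)$, which has finite energy by the Sobolev upper bound of Proposition~\ref{prop:sobolev_bound}, so that $\vphi_N \in \PCN$ eventually, $\vphi_N \to \vphi$ in $L^2(\Omega)$, and $\tbF_N(\vphi_N, A\cup B) \to \bF(\vphi, A\cup B)$. The key point is that once $[\cT_N] < d_0/2$, no cell of $\cT_N$ meets both $A'$ and $B'$, so $\cT_N\vert_{A'\cup B'} = \cT_N\vert_{A'} \sqcup \cT_N\vert_{B'}$; moreover any $K \in \cT_N\vert_{A'}$ and $L \in \cT_N\vert_{B'}$ satisfy $\dist(\overline K, \overline L) \ge d_0 - 2[\cT_N] > 0$, hence are not neighbours and $\Haus^{d-1}(\Gamma_{KL}) = 0$. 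Therefore the double sum defining $\cF_N(\cdot, A'\cup B')$ contains no cross terms between the two families of cells, and $\tbF_N(\vphi_N, A'\cup B') = \tbF_N(\vphi_N, A') + \tbF_N(\vphi_N, B')$.

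Then I would pass to the limit. The $\Gamma$-liminf inequality contained in the Mosco convergence gives $\bF(\vphi, A') \le \liminf_N \tbF_N(\vphi_N, A')$ and likewise for $B'$; adding these, using $\liminf a_N + \liminf b_N \le \liminf(a_N+b_N)$, the splitting identity, and the monotonicity $\tbF_N(\cdot, A'\cup B') \le \tbF_N(\cdot, A\cup B)$ (since $A'\cup B' \subseteq A\cup B$ and $\cF_N$ is non-decreasing in its set argument), one obtains $\bF(\vphi, A') + \bF(\vphi, B') \le \lim_N \tbF_N(\vphi_N, A\cup B) = \bF(\vphi, A\cup B)$. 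Taking the supremum over $B' \Subset B$ and then over $A' \Subset A$ and invoking the inner regularity of Proposition~\ref{prop:inner_regularity} removes the auxiliary sets and gives the desired super-additivity; combined with Proposition~\ref{prop:subadditivity}, this proves the claim.

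I do not expect a serious obstacle here: the only delicate point is the vanishing of the cross terms in $\cF_N$, which relies on the short (nearest-neighbour, scale $[\cT_N]$) range of the interaction together with the strictly positive separation of $\overline{A'}$ and $\overline{B'}$ produced by the inner-regular approximation; inner regularity (Proposition~\ref{prop:inner_regularity}) then does the rest. One should also double-check that the recovery sequence has finite energy, so that the three localised functionals are genuinely of the form $\cF_N(P_N\vphi_N, \cdot)$ and the splitting is an honest identity of finite sums.
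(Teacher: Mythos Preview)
Your proposal is correct and follows essentially the same route as the paper: reduce to superadditivity via Proposition~\ref{prop:subadditivity}, use a recovery sequence for $\bF(\vphi,A\cup B)$, exploit the short-range interaction to split $\tbF_N$ on well-separated sets, and apply the $\Gamma$-liminf inequality on each piece. The only cosmetic difference is that the paper invokes inner regularity up front to assume $d(A,B)>0$ without loss of generality, whereas you carry explicit inner approximations $A'\Subset A$, $B'\Subset B$ through the argument and take suprema at the end; your version is slightly more explicit but the content is identical.
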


\begin{proof}
In view of the subadditivity result from Proposition \ref{prop:subadditivity}, it remains to show superadditivity on disjoint sets.
Fix $A,B \in \cO(\Omega)$ with $A \cap B = \emptyset$. 
By inner regularity (Proposition \ref{prop:inner_regularity}) we may assume that $d(A,B) > 0$. Consequently, for $N$ sufficiently large we have
\begin{align*}
	 \tbF_N(\vphi, A \cup B) 
	 	= 
	\tbF_N(\vphi, A) + \tbF_N(\vphi, B) 
	\quad \forall \vphi \in H^1(\Omega).
\end{align*}

Fix $\vphi \in H^1(\Omega)$ and let $(\vphi_N)_N$ be a recovery sequence for $\bF(\vphi, A \cup B)$. Using the previous identity we obtain
\begin{align*}
	  		\bF(\vphi, A) 
	+ 		\bF(\vphi, B)
	 & \leq \liminf_{N \to \infty}
	 		    \tbF_N(\vphi_N, A)
		  + \liminf_{N \to \infty} 
		  		\tbF_N(\vphi_N, B)
 \\	& \leq \liminf_{N \to \infty}
	 		    \Big(
				\tbF_N(\vphi_N, A)
		  + 	\tbF_N(\vphi_N, B)
		  		\Big)
 \\	&= \liminf_{N \to \infty} 
 			\tbF_N(\vphi_N, A \cup B) 
 \\	&= \bF(\vphi, A \cup B),
\end{align*}
which is the desired superadditivity inequality.
\end{proof}

We are now in a position to collect the pieces for the proof of  Theorem \ref{thm:mosco-pre_rapp_char}.

\begin{proof}[Proof of Theorem \ref{thm:mosco-pre_rapp_char}]
In view of Theorem \ref{thm:Sob-finite}, we know that $\bF = + \infty$ outside of $H^1(\Omega)$.
To obtain the desired result on $H^1(\Omega)$ we check that $\bF(\cdot,A)$ satisfies the conditions of Theorem \ref{thm:representation}.

The locality \eqref{it:local} has been shown in Proposition \ref{prop:locality}.

To prove \eqref{it:subadditivity}, we apply the De Giorgi-Letta criterion, cf.\ \cite{degiorgi1977}, \cite{Braides1998}. For any $\vphi \in H^1(\Omega)$, it follows from Propositions \ref{prop:inner_regularity}, \ref{prop:subadditivity}, and \ref{prop:additivity} that $\bF(\vphi,\cdot)$ is the restriction of a Borel measure to $\cO(\Omega)$. 

The Sobolev upper bound \eqref{it:Sobolev} has been proved in
Proposition \ref{prop:sobolev_bound}, whereas the corresponding lower bound follows from Proposition \ref{prop:sobolev_limit}.

Finally, to prove \eqref{it:lsc} we note that lower semicontinuity with respect to strong  $L^2(\Omega)$-convergence follows from the fact any $\Gamma$-limit is lower semicontinuous; see \cite[Proposition 1.28]{braides2002}. Since $H^1(\Omega)$ is compactly embedded in $L^2(\Omega)$, the result follows.
\end{proof}

\subsection{The characterisation of the $\Gamma$-limit}\label{sec:char}
To prove Theorem \ref{thm:Mosco} it remains to characterise the $\Gamma$-limit $\bF$ obtained in Theorem \ref{thm:mosco-pre_rapp_char}. 
It thus remains to compute the function $F$ appearing in Theorem \ref{thm:mosco-pre_rapp_char}. From \eqref{eq:local_density} it follows that for $x \in \Omega$, $u \in \R$ and $\xi \in \R^d$,
\begin{align}	
\label{eq:local_density_m}
	F(x,u,\xi)
		=
	 \limsup_{\eps \rightarrow 0^+} 
	 	\frac{\bM
			\big( u + \xi(\cdot - x); Q_{\eps}(x)\big)}{\eps^d},
\end{align}
where $Q_\eps(x)$ denotes the open cube of side-length $\eps$ centred at $x$ and
\begin{align*}
	\bM(\vphi, A) := \inf_\psi 
		\big\{ \bF(\psi,A) 
				\suchthat 
			\psi \in H^1(\Omega) \text{ s.t. } 
		   \psi - \vphi \in H^1_0(A)
		\big\}
\end{align*}
for any Lipschitz function $\vphi : \Omega \to \R$ and any open set $A \subseteq \Omega$ with Lipschitz boundary.
As we will compute $\bM$ by discrete approximation, we consider its discrete counterpart $\cM_\cT$ defined by
\begin{align*}
	\cM_\cT(f, A) := \inf_g \{ \cF_\cT(g, A) \suchthat 
					g \in \R^\cT \text{ s.t. } f = g \text{ on } \cT\vert_{A^c} \}
\end{align*}
for $f : \cT \to \R$, where $\cT\vert_A$ for $A \subset \Omega$ is defined in \eqref{eq:defTA}.

\begin{remark}[Strong continuity of $\bF(\cdot,A)$ in $H^1(\Omega)$]
		\label{rem:strong_continuity}
	The quadratic nature of the discrete problems allows us to infer more information about the limit density. 
	In fact, it follows that 
	$F(x,u,\xi) = \ip{ a(x) \xi, \xi }$ 
	for some bounded matrix-valued function $a$; 
	see \cite[Remark 3.2]{Alicandro2004}. 
	Consequently, for every $A \in \cO(\Omega)$, the $\Gamma$-limit $\bF(\cdot,A)$ is continuous for the strong topology of $H^1(\Omega)$.
	This fact will be used in the proof of Lemma \ref{lem:minima-convergence} below.
\end{remark}

The following result is crucial in the proof of Theorem \ref{thm:f_quadratic}.

\begin{lemma}\label{lem:minima-convergence}
Assume \eqref{eq:ub},
and suppose that $\tbF_N(\cdot,B) \xrightarrow[]{\Gamma} \bF(\cdot,B)$ in $L^2(\Omega)$ as $N \to \infty$ for any $B \in \cO(\Omega)$. 
Then, for any $A \in \cO(\Omega)$ with Lipschitz boundary and any Lipschitz function $\vphi : \Omega \to \R$, we have 
\begin{align}	\label{eq:conv_minima}
	\cM_N(P_N \vphi,A)
	     \to
	 \bM(\vphi,A).
\end{align}
\end{lemma}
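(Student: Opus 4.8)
The plan is to establish \eqref{eq:conv_minima} as a $\liminf$/$\limsup$ sandwich. The key point is that both $\cM_N(P_N\vphi, A)$ and $\bM(\vphi, A)$ are infima of the (discrete, resp.\ continuous) Dirichlet energies over admissible competitors with prescribed boundary data, and we want to transfer the Mosco convergence $\tbF_N \xrightarrow{M} \bF$ (restricted to each open set) through these constrained minimisations. The Lipschitz regularity of $\partial A$ will be used to guarantee that boundary data can be matched after small perturbations; the Lipschitz regularity of $\vphi$ guarantees that $P_N\vphi$ is a good discretisation of $\vphi$ (indeed $Q_N P_N \vphi \to \vphi$ in $L^2(\Omega)$, and in fact uniformly).

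\textbf{Lower bound: $\liminf_N \cM_N(P_N\vphi, A) \geq \bM(\vphi, A)$.} Let $g_N \in \R^{\cT_N}$ be (almost) optimal for $\cM_N(P_N\vphi, A)$, so that $g_N = P_N\vphi$ on $\cT_N\vert_{A^c}$ and $\cF_N(g_N, A) \le \cM_N(P_N\vphi, A) + \tfrac1N$. We may assume the left-hand side stays bounded (else nothing to prove). Then by the Sobolev lower bound (Proposition~\ref{prop:sobolev_limit}, applied on $A$) the functions $\psi_N := \Q_N g_N$ are relatively compact in $L^2(A)$ with any limit point in $H^1(A)$; since the values agree with $P_N\vphi$ outside a $[\cT_N]$-neighbourhood of $A^c$ and $Q_N P_N \vphi \to \vphi$ uniformly, the limit $\psi$ satisfies $\psi - \vphi \in H^1_0(A)$ (here one uses that $\partial A$ is Lipschitz so that the trace is well-behaved; more concretely, $\psi_N - Q_N P_N\vphi$ vanishes on a shrinking neighbourhood of $A^c$, hence its $L^2$-limit lies in $H^1_0(A)$ after modifying on a null set). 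Extend $\psi$ to an $H^1(\Omega)$ competitor. By the $\Gamma$-$\liminf$ inequality for $\tbF_N(\cdot, A)$ we get $\bF(\psi, A) \le \liminf_N \tbF_N(\psi_N, A) = \liminf_N \cF_N(g_N, A) = \liminf_N \cM_N(P_N\vphi, A)$, and the left side is $\ge \bM(\vphi, A)$ by definition.

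\textbf{Upper bound: $\limsup_N \cM_N(P_N\vphi, A) \le \bM(\vphi, A)$.} Fix $\delta > 0$ and pick $\psi \in H^1(\Omega)$ with $\psi - \vphi \in H^1_0(A)$ and $\bF(\psi, A) \le \bM(\vphi, A) + \delta$. Take a recovery sequence $h_N = \Q_N f_N$ for $\bF(\psi, A)$ (existing by Mosco, hence $\Gamma$-, convergence), so $h_N \to \psi$ in $L^2(\Omega)$ and $\limsup_N \cF_N(f_N, A) \le \bF(\psi, A)$. The obstruction is that $f_N$ need not equal $P_N\vphi$ on $\cT_N\vert_{A^c}$. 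To fix this, choose open sets $A' \Subset A'' \Subset A$ with $\int_{A \setminus \overline{A'}} |\nabla \psi|^2\dd x < \delta$ (possible since $\psi - \vphi \in H^1_0(A)$ means $\nabla\psi = \nabla\vphi$ doesn't help directly — rather use that we may first approximate $\psi$ in $H^1$ by functions equal to $\vphi$ near $\partial A$, using density of $C_c^\infty(A)$ in $H^1_0(A)$; this is where Lipschitz $\partial A$ enters). Assume then WLOG $\psi = \vphi$ on a neighbourhood of $\partial A$. Now interpolate between $f_N$ (inside) and $P_N\vphi$ (outside) across the collar $A'' \setminus \overline{A'}$ using cutoff functions exactly as in the proofs of Propositions~\ref{prop:inner_regularity} and \ref{prop:subadditivity}: averaging over $M$ cutoffs at disjoint scales, the cross-terms contribute $\lesssim \bar k M^2 \|h_N - Q_N P_N\vphi\|_{L^2}^2 + \tfrac{1}{M}\bar k \int_A |\nabla\psi|^2 \to \tfrac1M \cdot(\text{const})$ as $N\to\infty$, since $h_N \to \psi = \vphi$ and $Q_N P_N\vphi \to \vphi$ in $L^2$. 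The resulting $\tilde f_N$ agrees with $P_N\vphi$ on $\cT_N\vert_{A^c}$ (for $N$ large, by the short interaction range), so it is admissible for $\cM_N(P_N\vphi, A)$, and $\limsup_N \cF_N(\tilde f_N, A) \le \bF(\psi, A) + \tfrac{C}{M} \le \bM(\vphi,A) + \delta + \tfrac{C}{M}$. Let $M \to \infty$ then $\delta \to 0$.

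\textbf{Main obstacle.} The delicate point is the treatment of boundary data: matching the $H^1_0(A)$-constraint in the limit (lower bound) and realising it discretely without paying an energy penalty (upper bound). Both rely on (a) the Lipschitz regularity of $\partial A$ to approximate $H^1_0(A)$-functions by $C_c^\infty(A)$-functions in $H^1$, and (b) the gluing/cutoff technique already developed in Propositions~\ref{prop:inner_regularity}–\ref{prop:subadditivity}, which uses the Sobolev upper bound (Proposition~\ref{prop:sobolev_bound}) to control the collar contributions and the very short interaction range of the mesh to ensure that the glued discrete function exactly matches the prescribed boundary values outside $A$. The strong $H^1(\Omega)$-continuity of $\bF(\cdot, A)$ noted in Remark~\ref{rem:strong_continuity} is what lets us reduce to such smooth boundary-compatible competitors $\psi$ without changing $\bM$.
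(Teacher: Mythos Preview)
Your proposal is correct and follows essentially the same route as the paper. The only cosmetic difference is that the paper packages the argument as $\Gamma$-convergence of the \emph{constrained} functionals $\tbF_N^\vphi(\cdot,A)$ (equal to $\tbF_N(\cdot,A)$ on $\PCN(\vphi,A)$ and $+\infty$ elsewhere) towards $\bF^\vphi(\cdot,A)$, and then invokes the standard ``$\Gamma$-convergence $+$ equicoerciveness $\Rightarrow$ convergence of minima'' result, whereas you carry out the equivalent $\liminf/\limsup$ sandwich directly; the substance of both arguments (compactness via Proposition~\ref{prop:sobolev_limit} and Lipschitz trace matching for the lower bound, density reduction to $\supp(\psi-\vphi)\Subset A$ via Remark~\ref{rem:strong_continuity} and the cutoff gluing of Proposition~\ref{prop:inner_regularity} for the upper bound) is identical. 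One minor point: in your lower bound you extract compactness only on $A$ and then extend $\psi$ to $H^1(\Omega)$ by hand, while the paper instead bounds $\tbF_N(\psi_N,\Omega)\lesssim \tbF_N(\psi_N,A)+\overline{k}\Lip(\vphi)^2$ (using that $\psi_N=P_N\vphi$ outside $A$) to get compactness and $H^1$-regularity on all of $\Omega$ in one stroke---slightly cleaner, but your version works too since $A$ has Lipschitz boundary.
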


\begin{proof}
First we embed the discrete functionals in the continuous setting. 
For any Lip\-schitz function $\vphi : \bOmega \to \R$ and any open set $A \subseteq \Omega$ we set
\begin{equation}
\label{eq:def:cT_U}
	\PCN(\vphi,A) := 
	\{ \psi \in \PCN \suchthat 
	\psi(x_K) = \vphi(x_K) \ 
	\forall K \in \cT_N \vert_{A^c}  \}.
\end{equation}
We consider the embedded discrete energies $\tbF_N^\vphi : L^2(\Omega) \to [0, + \infty]$ defined by
\begin{align*}
	\tbF_N^\vphi(\psi,A)
	&:= 
	\begin{cases}
		\cF_N(P_N \psi, A)
			& \text{if }\psi \in \PCN(\vphi, A), \\
   + \infty & \text{otherwise}	,	
   \end{cases}
\end{align*}
and their continuous counterpart $\bF^\vphi: L^2(\Omega) \to [0, + \infty]$ defined by
\begin{align*}
		\bF^\vphi(\psi,A)
	&:= 
	\begin{cases}
	\bF(\psi,A)
	& \text{if }\psi - \vphi \in  H^1_0(A),  \\
	+ \infty & \text{otherwise}		.
	\end{cases}
\end{align*}
We claim that
\begin{align*}
	\tbF_N^\vphi(\cdot,A) \xrightarrow[]{\Gamma} \bF^\vphi(\cdot, A), \quad \forall A \subseteq \Omega \; \text{with Lipschitz boundary}, \; \vphi \in \Lip(\R^d),
\end{align*} 
which implies, together with Proposition \ref{prop:sobolev_limit} and by a basic result from the theory of $\Gamma$-convergence
\cite[Theorem~1.21]{braides2002}, the desired convergence of the minima in \eqref{eq:conv_minima}.
To prove the claim, we argue as in \cite[Theorem 3.10]{Alicandro2004}.

To prove the liminf inequality, we consider a sequence $\psi_N \weakto \psi$ in $L^2(\Omega)$ satisfying $\sup_N \tbF_N^\vphi(\psi_N,A) < +\infty$. 
In particular, this implies that $\psi_N \in \PCN(\vphi,A)$ and $\tbF_N^\vphi(\psi_N,A) = \tbF_N(\psi_N,A)$. 
Since $\tbF_N(\cdot,A) \xrightarrow{\Gamma} \bF(\cdot,A) $, it remains to prove that $\psi - \vphi \in H_0^1(A)$. 
In view of the boundary condition and the fact that $\vphi \in \Lip(\R^d)$, we have
\begin{align*}
		\tbF_N(\psi_N, \Omega) 
		\leq 
			\tbF_N(\psi_N, A) 
		+ 	\tbF_N(\vphi, \Omega)  
		\lesssim  
			\tbF_N(\psi_N, A) + \overline{k} \Lip(\vphi)^2.
\end{align*}
It follows from this bound and Proposition \ref{prop:sobolev_limit} that $\psi_N \rightarrow \psi$ strongly in $L^2(\Omega)$ and $\psi \in H^1(\Omega)$. 
Moreover, by construction we have $\psi_N \rightarrow \vphi$ in $L^2(\Omega \setminus A)$. 
Since $A$ has Lipschitz boundary, we conclude that $\psi - \vphi \in H^1_0(A)$.

Let us now prove the limsup inequality. 
Pick $\psi \in L^2(\Omega)$ such that $\bF^\vphi(\psi,A) < +\infty$. 
In particular, $\psi - \vphi \in H_0^1(A)$. 
Without loss of generality, 
we may assume that $\supp(\psi - \vphi) \ssubset A$, 
as the general case follows from this by a density argument using the continuity of $\bF$ 
in the strong $H^1(\Omega)$-topology; 
see Remark \ref{rem:strong_continuity}. 
Consider a recovery sequence 
$\psi_N \to \psi$ in $L^2(\Omega)$ such that 
$\tbF_N(\psi_N,A) \to \bF(\psi,A) = \bF^\vphi(\psi,A)$ 
as $N \to \infty$.  
Now we argue as in the proof of Proposition \ref{prop:inner_regularity}. 
For any $\delta > 0$ there exists a cutoff function $\zeta_\delta$ with the following properties: 
\begin{enumerate}[(i)]
\item 
	$\supp(\psi - \vphi) 
		\ssubset \supp \zeta_\delta 
		\ssubset A$;
\item the functions 
	$\psi_N^\delta
	 	 := Q_N \circ P_N
			\big( 
					    \zeta_\delta \psi_N
				   + (1-\zeta_\delta) \vphi 
			\big)$
satisfy
\begin{align*}
		\limsup_{N \to \infty} 
			\tbF_N^\vphi(\psi_N^\delta,A)
	& = \limsup_{N \to \infty} 
			\tbF_N(\psi_N^\delta,A) 
 \\& \leq \limsup_{N \to \infty} 
  			\tbF_N(\psi_N,A) + \delta 
	= 	\bF^\vphi(\psi,A) + \delta.
\end{align*}
\end{enumerate}
Passing to the limit $\delta \to 0$ using a diagonal subsequence $\psi_N^{\delta(N)} \to \psi$ in $L^2(\Omega)$, the result follows.
\end{proof}

\begin{proof}[Proof of Theorem \ref{thm:f_quadratic}]
We split the proof into two parts. 

\medskip

\emph{Step 1.} 
We first suppose that 
$\mu$ is the normalised Lebesgue measure and $m_N(K) = \pi_N(K) =  \frac{|K|}{|\Omega|}$, and we fix $\eps > 0$. 
For fixed $b \in \R$, $z \in \Omega$, and $\xi \in \R^d$ we will compute 
\begin{align*}
	\cM_N\big( f_N, Q_\eps (z) \big), 
	\quad \text{where } 
		f_N(K) 
			:= \vphi_{b,z}^\xi(x_K)
	\text{ and } 
		\vphi_{b,z}^\xi(\cdot) 
			:= u + \xi(\cdot - z)
\end{align*}
As a shorthand we write
 $Q_\eps := Q_\eps(z)$.
Recall that
\begin{align*}
	\cM_N(f, Q_\eps) 
	& = \inf_g \big\{ \cF_N(g,Q_\eps) 
			\suchthat g \in \R^{\cT_N} \text{ and }
		g(K) = f(K) 
	\text{ for } K \in \cT_N\vert_{Q_\eps^c}
	\big\}.
\end{align*}
In other words, we minimise the discrete Dirichlet energy localised on $Q_\eps$ with Dirichlet boundary conditions given by the discretised affine function $f$. 
As follows by computing the first variation of the action, the unique minimiser is given by the solution $h : \cT_N \to \R$ of the corresponding discrete Laplace equation 
\begin{align}
\label{eq:discreteEL}
\begin{cases} 
	\cL_N h(K) = 0 
		& \text{for } K \in \cT_N \setminus \cT_N \vert_{Q_\eps^c}, \\ 
	h(K) = f_N(K)
		& \text{for } K \in \cT_N \vert_{Q_\eps^c}.
\end{cases}		
\end{align}
We claim that the function $f_N$ solves \eqref{eq:discreteEL}. 
Indeed, the boundary conditions hold trivially. 
Moreover, writing $\tau_{KL} := \frac{x_K - x_L}{|x_K - x_L|}$ we obtain for any $K \in \cT_N \setminus \cT_N \vert_{Q_\eps^c}$,
\begin{align*}
	\pi_N(K) \cL_N f_N(K) 
	  & = \sum_{L \sim K} \frac{|\Gamma_{KL}|}{d_{KL}} 
	  		\big(f_N(L) - f_N(K)\big)
		= - \sum_{L\sim K} |\Gamma_{KL}| \ip{\xi, \tau_{KL}}
	\\&	= \int\limits_{\partial K}\ip{ \xi, \nu_{\text{ext}} } 
			\dd \Haus^{d-1} 
		= 0,
\end{align*}
where $\nu_{\text{ext}}$ denotes the outward normal unit normal and in the last step we used Stokes' theorem. 
This computation shows the optimality of $f$ and hence
\begin{align*}
	\cM_N(f_N, Q_\eps) = \cF_N(f_N, Q_\eps).
\end{align*}

For the asymptotic computation of $\cF_N(f_N, Q_\eps)$
we use the average isotropy property of any regular mesh (see \cite[Lemma 5.4]{gladbach2018}) to obtain 
\begin{align*}
	\left| \cF_N(f_N, Q_\eps) - \eps^d |\xi|^2 \right| 
	& = \bigg|\bigg( \frac14
		 \sum_{\substack{K,L \in \cT_N 
	\\ \mathclap{\overline{K},\overline{L} \cap Q_\eps \neq \emptyset}}} d_{KL} 
		|\Gamma_{KL}| \ip{ \xi, \tau_{KL} }^2  \bigg)
		  - |\xi|^2 |Q_\eps| \bigg| 
\\
	& \leq \big|B\big(\partial Q_\eps, 5[\cT_N]\big)\big| \rightarrow 0 \text{ as }N \to \infty,
\end{align*} 
where $B(C,r) := \{x \in \Omega \suchthat d(x,C) < r\}$.
Note that we get $|B(\partial Q_\eps, 5[\cT_N])|$ instead of $|B(\partial Q_\eps, 4[\cT_N])|$ as in \cite[Lemma 5.4]{gladbach2018} because we take into account all the cells whose closure intersects the cube $Q_\eps$ and not only the ones contained in it. 
Together with Lemma \ref{lem:minima-convergence}, we obtain, for all $\xi \in \R^d$ and $\eps > 0$,
\begin{align}	
\label{eq:limit_M_1}
	\bM(\vphi_{b,z}^\xi, Q_\eps) 
		= \lim_{N \to \infty} 
			\cM_N(f, Q_\eps)
		= \lim_{N \to \infty} 
			\cF_N(f, Q_\eps)
		= \eps^d |\xi|^2, 
\end{align}
hence
\begin{align*}
	F(x,u,\xi) 
		= \limsup_{\eps \to 0^+} 
			\frac{\bM(\vphi_{b,z}^\xi, Q_\eps)}{\eps^d} 
		= |\xi|^2,
\end{align*}
which concludes the proof in the special case $\sigma$, $\rho \equiv 1$, $m_N = \pi_N$. 

\medskip

\emph{Step 2.} Let us now consider the general case where 
$m_N$ and $\mu$ satisfy \eqref{eq:lb}, \eqref{eq:ub}, and \eqref{eq:pc}.
We write $\bar \cF_N, \bar \cM_N$ for the analogues of $\cF_N, \cM_N$ in the special case where $\mu$ is the normalised Lebesgue measure and $m_N = \pi_N$, which we considered in Step 1. 

Fix $b \in \R$, $z \in \Omega$, and $\xi \in \R^d$, and let $Q_\eps$, $\vphi_{b,z}^\xi$, and $f$ be as above.
Furthermore, let $\upsilon_N$ be the density of $\mathsf{Q}_N m_N$ with respect to the Lebesgue measure. 
For all $g : \cT_N \to \R$ we have by construction,
\begin{align*}
 	\Big( \inf_{Q_{2\eps}} \upsilon_N \Big)
		 \bar\cF_N(g, Q_\eps) 
	\leq \cF_N(g, Q_\eps) 
	\leq \Big( \sup_{Q_{2\eps}  } \upsilon_N \Big)
		 \bar\cF_N(g, Q_\eps),
\end{align*}
hence, in particular,
\begin{align*}
	\Big( \inf_{Q_{2\eps}} \upsilon_N  \Big) 
			\bar\cM_N(f, Q_\eps) 
	\leq \cM_N(f, Q_\eps) 
	\leq \Big( \sup_{Q_{2\eps}} \upsilon_N \Big) 
			\bar\cM_N(f, Q_\eps).
\end{align*}
As a consequence of the first part of the proof and \eqref{eq:limit_M_1}, taking the limit as $N \to \infty$ and applying \eqref{eq:conv_minima}, we deduce
\begin{align*}
	\Big( \limsup_{N \to \infty} 
			\inf_{Q_{2\eps}} \rho_N \Big)  
					|\xi|^2 \eps^d 
	   & \leq \bM(\vphi_{b,z}^\xi, Q_\eps) 
	\leq \Big( \liminf_{N \to \infty} 
		\sup_{Q_{2\eps} } \rho_N \Big) 
		|\xi|^2 \eps^d.
\end{align*}
Taking the limsup as $\eps \to 0$, 
we deduce from \eqref{eq:local_density_m}
and the condition \eqref{eq:pc},  
\begin{align*}
	F(x,u,\xi)
	 = \limsup_{\eps \to 0} 
	 	\frac{ \bM(\vphi_{b,z}^\xi, Q_\eps) }{\eps^d} 
	 = |\xi|^2 \upsilon(x) 
	 \quad \text{ for a.e. }z \in \Omega,
\end{align*}
which concludes the proof.
\end{proof}

\subsection*{Acknowledgment}
This work is supported by the European Research Council (ERC) under the European Union's Horizon 2020 research and innovation programme (grant agreement No 716117) and by the Austrian Science Fund (FWF), grants No F65 and W1245.

\bibliographystyle{my_alpha}
\bibliography{literatureEV}


\end{document}